\pgfplotsset{
        compat=newest,
        my axis style/.style={
            xlabel style={
                font=\footnotesize,
            },
            ylabel style={
                font=\footnotesize,
            },
            legend style={
                font=\footnotesize,
            },
            ticklabel style={
                font=\footnotesize,
            },
            every axis plot/.append style={
                line width=1pt,
            },
            legend pos=north west,
            legend columns = 3
        },
    }
\newlength\fheight
\newlength\fwidth
\newlength\fheightwide
\newlength\fwidthwide
\newlength\fheightthree
\newlength\fwidththree
\tikzset{external/only named=true}
\newcommand{\graybox}[3][20]{%
  \tikz[baseline=(X.base)] {
    \node[inner sep=2pt, outer sep=3pt, fill=gray!#1, draw=white, minimum size=2em, anchor=mid] (X) {\phantom{$#3$}};
    \node[anchor=mid, overlay] {$#2$};
    }
}
\newcommand{\dashedbox}[1]{%
  \tikz[baseline=(X.base)] 
    \node[draw=white, dashed, inner sep=2pt, minimum size=2em] (X) {$#1$};
    }
\theoremstyle{plain}
\newtheorem{theorem}{Theorem}
\newtheorem{lemma}[theorem]{Lemma}
\newtheorem{cor}[theorem]{Corollary}
\theoremstyle{definition}
\newtheorem{assu}{Assumption}
\newtheorem{defin}{Definition}
\theoremstyle{remark}
\newtheorem*{remark}{Remark}
\newtheorem{exmp}{Example}[section]
\newcommand{\diff}[1][]{\mathrm{d}#1}
\newcommand{\pd}[2]{\frac{\partial #1}{\partial #2}}
\newcommand{\pdat}[3]{\left. \pd{#1}{#2} \right\vert_{#3}}
\newcommand{\td}[2]{\frac{\diff #1}{\diff #2}}
\newcommand{\norm}[1]{\left\Vert #1 \right\Vert}
\newcommand{\abs}[1]{\left\vert #1 \right\vert}
\newcommand{\T}{^{\mathop{\mathrm{T}}}}
\newcommand{\diag}{{\mathop{\mathrm{diag}}}}
\newcommand{\Rspace}{\mathbb{R}}
\newcommand{\Cspace}{\mathbb{C}}
\newcommand{\Nspace}{\mathbb{N}}
\newcommand{\Zspace}{\mathbb{Z}}
\newcommand{\re}{\mathrm e}
\newcommand{\ri}{\mathrm i}
\newcommand{\ic}{\ri} 
\newcommand{\ex}{\re} 
\newcommand{\val}{\bm{\alpha}}
\newcommand{\vPh}{\mathbf \Phi}
\newcommand{\vf}{\mathbf f}
\newcommand{\vg}{\mathbf g}
\newcommand{\vn}{\mathbf n}
\newcommand{\vp}{\mathbf p}
\newcommand{\vx}{\mathbf x}
\newcommand{\vy}{\mathbf y}
\newcommand{\vz}{\mathbf z}
\newcommand{\vzero}{\mathbf{0}}
\newcommand{\vA}{\mathbf A}
\newcommand{\vC}{\mathbf C}
\newcommand{\vD}{\mathbf D}
\newcommand{\vE}{\mathbf E}
\newcommand{\vH}{\mathbf H}
\newcommand{\vI}{\mathbf I}
\newcommand{\vJ}{\mathbf J}
\newcommand{\vM}{\mathbf M}
\newcommand{\vP}{\mathbf P}
\newcommand{\vQ}{\mathbf Q}
\newcommand{\vW}{\mathbf W}
\newcommand{\cA}{\mathcal{A}}
\newcommand{\cD}{\mathcal{D}}
\newcommand{\cF}{\mathcal{F}}
\newcommand{\cJ}{\mathcal{J}}
\newcommand{\cK}{\mathcal{K}}
\newcommand{\cL}{\mathcal{L}}
\newcommand{\cP}{\mathcal{P}}
\newcommand{\vPhsub}[1][]{\vPh_{\mathrm{subh}#1}}
\newcommand{\vCsub}[1][]{\tilde{\vC}_{\mathrm{subh}#1}}
\newcommand{\cPsub}{\cP_{\mathrm{subh}}}
\begin{document}
\begin{frontmatter}
	\journal{}
	\title{Explicit error bounds and guaranteed convergence of the Koopman-Hill projection stability method for linear time-periodic dynamics}				
	\author[unistutt]{Fabia Bayer\texorpdfstring{\corref{cor1}}{}}
	\ead{bayer@inm.uni-stuttgart.de}
	\author[unistutt]{Remco I. Leine}
	\ead{leine@inm.uni-stuttgart.de}
	\affiliation[unistutt]{organization={University of Stuttgart, Institute for Nonlinear Mechanics},addressline={Pfaffenwaldring 9},city={70569 Stuttgart},country={Germany}}
	\cortext[cor1]{Corresponding author}
	\begin{abstract}
		The Koopman-Hill projection method offers an efficient approach for stability analysis of linear time-periodic systems, and thereby also for the Floquet stability analysis of periodic solutions of nonlinear systems. However, its accuracy has previously been supported only by numerical evidence, lacking rigorous theoretical guarantees. This paper presents the first explicit error bound for the truncation error of the Koopman-Hill projection method, establishing a solid theoretical foundation for its application. The bound applies to linear time-periodic systems whose Fourier coefficients decay exponentially with a sufficient rate, and is derived using constructive series expansions. The bound quantifies the difference between the true and approximated fundamental solution matrices, clarifies conditions for guaranteed convergence, and enables conservative but reliable inference of Floquet multipliers and stability properties. Additionally, the same methodology applied to a subharmonic formulation demonstrates improved convergence rates of the latter. Numerical examples, including the Mathieu equation and the Duffing oscillator, illustrate the practical relevance of the bound and underscore its importance as the first rigorous theoretical justification for the Koopman-Hill projection method.
	\end{abstract}
	
	\begin{keyword}
		Hill matrix \sep Floquet multipliers \sep numerical stability analysis \sep periodic solutions \sep monodromy matrix
	\end{keyword}
\end{frontmatter}
\section{Introduction}

The Koopman-Hill projection method was introduced in~\cite{Bayer2023} to approximate the fundamental solution matrix of linear time-periodic systems. However, the justification for the method in prior work, which is based on the Koopman framework, does not provide any insight into the magnitude of the error incurred by truncation. In principle, this error could become very large and render the method useless. Prior works reported only numerical evidence for the accuracy of the method~\cite{Bayer2023,Bayer2024,McGurk2025}. The central contribution of the present work is to strengthen the theoretical foundation of the Koopman-Hill method by providing a closed-form bound for the truncation error for a certain class of linear time-periodic systems. 

Floquet theory is the mathematical field concerning the evolution and the local stability analysis of linear time-periodic (LTP) ordinary differential equations (ODEs) of the form 
\begin{align}\label{eq:background:ode}
	\dot{\vy}(t) = \vJ(t) \vy(t) \;,
\end{align}
where the system matrix $\vJ(t) = \vJ(t + T) \in \Rspace ^{n \times n}$ is periodic with period $T$.
This field of study, pioneered by Gaston Floquet in the 1880s~\cite{Floquet1883}, remains of prime interest in various fields of engineering and applied mathematics. While parametrically excited mechanical systems such as simulation models for interacting gears~\cite{Abboud2021}, MEMS devices~\cite{Moran2013}, or helicopter ground resonance problems~\cite{IgnaciodaSilva2019} can intrinsically showcase LTP dynamics, arguably the largest field of application of Floquet theory is the study of stability of periodic orbits or periodic solutions of nonlinear dynamical systems. Periodic solutions arise naturally in many applications, such as structural dynamics~\cite{Geradin2014} and electrical circuits~\cite{Schubert2016}. While only stable periodic orbits can be attained in practical experiments, unstable periodic solutions characterize the global behavior of a system, and knowledge of them can be critical to safe operation~\cite{Horvath2021}. 
If a nonlinear ordinary differential equation with a periodic orbit is linearized around that periodic orbit, the resulting dynamical system is LTP and of the form of Equation~\eqref{eq:background:ode}. Due to the Hartman-Grobman theorem, the stability properties of the periodic solution are equivalent to those of the trivial solution of Equation~\eqref{eq:background:ode} in the hyperbolic case~\cite{Nayfeh1995}. 

In contrast to linear time-invariant systems, closed-form solutions for LTP systems with more than one state are usually not available. The stability of the LTP system in Equation~\eqref{eq:background:ode} is not immediately determined by the spectrum of the system matrix $\vJ(t)$, so sophisticated numerical methods are required. Methods for numerical stability analysis of LTP systems or, equivalently, hyperbolic periodic orbits of nonlinear ODEs, can classically be categorized into two families, with their corresponding operations taking place in the time domain or in the frequency domain, respectively.

The time domain approaches rely on numerical integration of the ODE~\eqref{eq:background:ode} over one period to obtain the system's monodromy matrix, whose eigenvalues govern the stability. While effective, these methods can be computationally expensive, especially for systems with long periods or many states. The approximation error that these approaches incur is determined by the chosen numerical integration scheme~\cite{Peletan2013, Hairer2008}. Global error bounds depending on the local error for fixed or variable-step integration methods are available for Lipschitz continuous dynamics~\cite{Hairer2008}, but are not often explicitly computed in practice. Integration schemes that specifically measure the global error are computationally expensive~\cite{Skeel1986}. 

Frequency domain methods are based on Fourier series representations of the LTP system of Equation~\eqref{eq:background:ode} and its solutions. This leads to the formulation of an eigenvalue problem involving the bi-infinite Hill matrix, which captures the system's characteristics in the frequency domain~\cite{Magnus1966,Lazarus2010}. This approach is especially appealing for stability analysis of periodic solutions computed numerically by the harmonic balance method (HBM), where a truncated variant of this Hill matrix is immediately available as a byproduct of HBM. The spectrum of the truncated matrix consists of physically meaningful Floquet exponents, as well as spurious eigenvalues without physical meaning due to truncation effects.

Sorting criteria \cite{Moore2005,Lazarus2010,Detroux2015} are commonly used to identify the physically meaningful eigenvalues. The imaginary-part-based sorting criterion proposed by Zhou et al.~\cite{Zhou2004} is guaranteed to converge as the truncation order tends to infinity, but no bounds are provided for any finite truncation order~$N$. 
Other eigenvalue sorting methods can exhibit better convergence rates in some numerical examples~\cite{Guillot2020, Wu2022}, but come with no convergence guarantee whatsoever. In addition, in the vicinity of period-doubling bifurcations, the non-uniqueness of Floquet exponents poses a challenge for all sorting-based methods~\cite{Colaitis2022}.

The Koopman-Hill method, introduced by the authors in 2023 \cite{Bayer2023}, offers a novel approach combining aspects of both frequency and time domains. By reinterpreting the truncated Hill matrix as the system matrix of a linear time-\emph{in}variant differential equation, the fundamental solution matrix of the original system, Equation~\eqref{eq:background:ode}, can be obtained using a single matrix exponential involving the Hill matrix, bypassing direct numerical integration. Previous numerical studies have shown that this method provides accurate ap\-proxi\-ma\-tions of the mo\-no\-dro\-my matrix, with the approximation error decreasing as the truncation order of the Hill matrix increases~\cite{Bayer2023,Bayer2024,McGurk2025, Bayer2025a}.

Despite the promising numerical performance, the previously available justification for the Koopman-Hill method based on the Koopman framework~\cite{Bayer2023}, recounted and discussed in Section~\ref{sec:background:KoopLift}, has lacked a rigorous error analysis. The central contribution of the present work is to strengthen the Koopman-Hill method by providing a closed-form error bound for the difference between the Koopman-Hill approximation and the fundamental solution matrix. The approach taken here does not rely on methodology from the Koopman framework. Rather, the idea is to find exact series expressions for the considered matrices, compare them term by term, and bound the difference. The considered series expression is novel to the authors' knowledge. The resulting bound decays exponentially with the truncation order of the Hill matrix. It depends only on two scalar parameters which govern the decay behavior of the Fourier coefficients of the original LTP dynamics and is applicable whenever these Fourier coefficients decay exponentially at a sufficient rate. 

Previous works of the authors~\cite{Bayer2023, Bayer2024} observed that a modified formulation of the Koopman-Hill method that includes additional, sub\-har\-monic frequencies can lead to a significant improvement in the approximation accuracy.  
However, these works did not provide an explanation for this improved performance. Using the formalism developed in this paper, we provide also an error bound for the subharmonic formulation. Compared to the error bound of the direct formulation, this error bound decays at twice the rate, indicating improved accuracy.

The structure of this paper is as follows. 
Section~\ref{sec:background} provides the necessary theoretical background on Floquet theory and summarizes the previous motivation for the Koopman-Hill projection method based on the Koopman framework. Section~\ref{sec:overview} introduces our main results. After stating the involved series expressions, we derive the closed-form error bound in Section~\ref{sec:overview}. The subharmonic formulation is introduced and bounded subsequently in Section~\ref{sec:subharmonics}. The proofs for the series expressions which were stated in Section~\ref{sec:overview} are carried out in Section~\ref{sec:series}. Section~\ref{sec:examples} illustrates the theoretical findings with three numerical examples. Finally, Section~\ref{sec:conclusion} concludes the paper with a summary and discussion of the main results and an outlook on future research.
\section{Theoretical background and notation}\label{sec:background}
This section provides the necessary background for the developments to follow. First, we revisit Floquet theory. Next, the arguments of~\cite{Bayer2023}, justifying the Koopman-Hill projection method for computing the fun\-da\-men\-tal solution matrix of Equation~\eqref{eq:background:ode}, are summarized. Further, the multi-index notation that is used throughout the remainder of this paper is introduced.
\subsection{Floquet Theory}
This section revisits some well-known facts from Floquet theory that are the basis for the developments of the following sections. For a more comprehensive treatment of the concepts of this section, the interested reader is referred to~\cite{Teschl2012,Chicone2006,Nayfeh1995}. Floquet theory is concerned with the study of linear time-periodic (LTP) dynamical systems given by Equation~\eqref{eq:background:ode}. We restrict the class of considered LTP systems to ones where the system matrix has a pointwise convergent Fourier series
\begin{align}\label{eq:background:fourier}
	\vJ(t) = \vJ(t + T) = \sum_{k \in \Zspace} \vJ_k \ex^{\ic k \omega t}
\end{align}
with the complex-valued Fourier coefficient matrices $\dots, \vJ_{-1}, \vJ_0, \vJ_1, \dots \in \Cspace^{n \times n}$. A sufficient criterion for the existence of this Fourier series is that $\vJ(t)$ is piecewise continuous with piecewise continuous derivative~\cite{Krack2019}, which is the case in many engineering applications.

Due to the linearity of Equation~\eqref{eq:background:ode}, any initial condition $\vy(t_0)$ is mapped to its solution $\vy(t) = \vPh(t, t_0) \vy(t_0)$ by the fundamental solution matrix $\vPh(t, t_0)$, which is the unique solution of the matrix initial value problem
\begin{align}\label{eq:background:ode:matrix}
	\dot{\vPh}(t, t_0) = \vJ(t) \vPh(t, t_0) && \vPh(t_0, t_0) = \vI \;.	
\end{align}
In the literature, any full-rank $n \times n$ matrix that satisfies $\dot{\vPh} = \vJ \vPh$ is often referred to as a fundamental solution matrix, regardless of its specific initial condition. The solution of~\eqref{eq:background:ode:matrix}, i.e., the specific fundamental solution matrix with the identity initial condition, is then termed the \emph{principal} fundamental solution matrix~\cite{Teschl2012}. However, for brevity, we will refer to the unique solution of Equation~\eqref{eq:background:ode:matrix} simply as the fundamental solution matrix. Setting $t_0 = 0$ without loss of generality, we denote the fundamental solution matrix as $\vPh(t) = \vPh(t, 0)$.

Floquet's theorem~\cite{Teschl2012} states that the fundamental solution matrix can be expressed as
\begin{align}\label{eq:background:floquet}
	\vPh(t) = \vP(t) \ex^{\vQ t} \;,
\end{align}
where $\vQ \in\Cspace^{n \times n}$ is a constant matrix and $\vP(t) =\vP(t + T)$ is a $T$-periodic complex-valued matrix with $\vP(0) = \vI$. The long-term behavior of the solution is solely determined by the eigenvalues $\alpha_1, \dots, \alpha_n$ of $\vQ$, which are referred to as the Floquet exponents. If all Floquet exponents have a negative real part, then the equilibrium of the system given by Equation~\eqref{eq:background:ode} is asymptotically stable. If the LTP system~\eqref{eq:background:ode} was obtained by linearization of a nonlinear ODE around a periodic solution, then the Floquet exponents also govern the stability of that periodic solution.

The fundamental solution matrix evaluated after one period, $\vPh_T :=\vPh(T) = \ex^{\vQ T}$, is called the monodromy matrix. It is of special interest as it provides a discrete mapping of states from one period to the next. If all eigenvalues of the monodromy matrix are inside the unit circle, the solution will always decay from one period to the next, indicating asymptotic stability of the equilibrium at zero. In contrast, if at least one eigenvalue is outside the unit circle, the solution in the corresponding eigendirection will grow from one period to the next, indicating instability. The eigenvalues $\lambda_1, \dots, \lambda_n$ of the monodromy matrix are called Floquet multipliers and it can be easily seen from Floquet's theorem, Equation~\eqref{eq:background:floquet}, that they are related to the Floquet exponents by $\lambda_i = \ex^{\alpha_i T}$.

Since the monodromy matrix is usually not available in closed form, numerical methods are needed to approximate the Floquet multipliers. An overview over various methods to obtain the monodromy matrix by numerical integration of Equation~\eqref{eq:background:ode:matrix} is given in~\cite{Peletan2013}. These numerical integration methods will not be treated further in this work. Alternatively, the periodic nature of the differential equation can be exploited to determine the stability based on the Fourier coefficients of $\vJ$. These so-called Hill methods rely on the Hill matrix, which is constituted of by the Fourier coefficients of the system matrix $\vJ(t)$. The Hill methods classically consider the full spectral decomposition of the Hill matrix to assert stability~\cite{Peletan2013,Lazarus2010}. The following section introduces the Koopman-Hill projection method~\cite{Bayer2023}, which combines aspects of the classical Hill method with (closed-form) time-domain integration over a period.

\subsection{The Koopman-Hill projection method motivated through the Koopman framework}\label{sec:background:KoopLift}
Homogeneous linear time-invariant (LTI) systems have a closed-form solution given by the matrix ex\-po\-nen\-tial and are therefore straightforward to solve, in contrast to LTP systems such as Equation~\eqref{eq:background:ode}. The Koopman-Hill projection method, introduced in~\cite{Bayer2023}, is based on the idea of approximating the nonautonomous dynamics of Equation~\eqref{eq:background:ode} with an LTI system in a higher-di\-men\-sio\-nal state space. This approach is motivated by the Koopman framework, which represents nonlinear autonomous systems as linear autonomous systems of larger, potentially infinite, dimension. The following exposition summarizes the arguments from~\cite{Bayer2023} that motivate the use of a matrix exponential to approximate the fundamental solution matrix of the LTP system~\eqref{eq:background:ode}.

The Koopman operator was originally developed to extract global information from nonlinear autonomous dynamics via its spectral decomposition, known as Koopman eigenfunctions~\cite{Koopman1931,Mauroy2016,Mauroy2020b}. More recently, the Koopman operator has been applied in data-driven modeling to construct linear systems that capture nonlinear behavior~\cite{Brunton2022,Bruder2021,Budisic2012}. For the purposes of the Koopman-Hill projection, only the matrix representation of the Koopman generator is required. Readers interested in further concepts are referred to the extensive literature on the Koopman operator, such as~\cite{Mauroy2020b} and references therein.

Consider a nonlinear autonomous dynamical system
\begin{align}\label{eq:background:ode:nonlin}
	\dot{\vx} = \vf(\vx)
\end{align}
in $\Rspace^n$, with the flow map $\vx(t) = \varphi(t, \vx(0))$. Let $\cF$ denote a Banach space of observable functions $g: \Rspace^n \rightarrow \Cspace$. The semigroup of Koopman operators $\cK^t: g \mapsto g \circ \varphi(t, \cdot)$ describes the evolution of observables along system trajectories~\cite{Mauroy2020b}. Under suitable regularity conditions (see~\cite{Engel2008}), this semigroup is strongly continuous and generated by the infinitesimal Koopman generator
\begin{align}
	\cL = \lim_{t \rightarrow 0} \frac{\cK^t - \vI}{t} \;,
\end{align}
which acts as the Lie derivative of the observable along the flow:
\begin{align}\label{eq:background:infgen}
	(\cL g)(\vx) = \dot{g}(\vx) = \pd{g}{\vx} \vf(\vx) \;.
\end{align}
For example, the derivative of a Lyapunov function $V$ along system trajectories is given by $\dot{V}= \cL V$. The infinitesimal Koopman generator $\cL$ is linear on the space of observables, even though the original dynamics is nonlinear.

To apply the Koopman framework to the linear time-periodic system~\eqref{eq:background:ode}, it is necessary to reformulate the nonautonomous ODE as an autonomous system. This is achieved by introducing an extended state space with dynamics
\begin{subequations}\label{eq:background:ode:ext}
	\begin{align}
		\dot{t} &= 1\\
		\dot{\vy} &= \vJ(t) \vy \;,
	\end{align}
\end{subequations}
where the extended state is $\vx\T = (t, \vy\T)$. As function space $\cF$, consider specifically the span of the complex-valued basis functions
\begin{align}\label{eq:background:observable}
	g_{k, l}(t, \vy) = y_l \ex^{-\ic k \omega t} 
\end{align}
for $k \in \Zspace$ and $l \in \left\{ 1, \dots, n \right\}$, equipped with the $L_2$ norm.

Define the collection $\vg_k := (y_1 \ex ^{-\ic k \omega t}, \dots, y_n \ex^{- \ic k \omega t})\T$ of all $n$ states at one specific frequency as the $k$-th block of basis functions. Using the Fourier series expansion of $\vJ(t)$ from Equation~\eqref{eq:background:fourier}, the derivative of the $k$-th block is
\begin{align}\label{eq:background:observable_deriv}
	\dot{\vg}_k(t, \vy) &= 
	\left(-\ic k \omega \vI + \sum_{l \in \Zspace} \vJ_l \ex^{\ic l \omega t}\right) \ex^{- \ic k \omega t} \vy = -\ic k \omega \vg_k + \sum_{\tilde{l} \in \Zspace} \vJ_{k - \tilde{l}} \, \vg_{\tilde{l}} \;,
\end{align} 
where the Fourier series expression of Equation~\eqref{eq:background:fourier} for $\vJ(t)$ is used.

Stacking all blocks of basis functions by decreasing frequency yields the bi-infinite vector
\begin{align}\label{eq:background:liftstate}
	\vg(t, \vy)= 
	\left(
	\begin{array}{l}
		\vdots\\
		\vy \ex^{2 \ic \omega t} \\
		\vy \ex^{\ic \omega t} \\
		\vy \\
		\vy \ex^{-\ic \omega t} \\
		\vy \ex^{-2 \ic \omega t} \\
		\vdots
	\end{array} 
	\right) = \ex^{- \ic \vD_{\infty} \omega t} \vW_{\infty} \vy\;.
\end{align}
Here, $\vD_{\infty} = \diag(..., -2\vI, -\vI, \vzero, \vI, 2\vI, \dots)$ is a diagonal bi-infinite matrix whose diagonal entries are the integers in ascending order, each repeated $n$ times. The operator $\vW_{\infty} = (\dots, \vI, \vI, \vI, \dots)\T$ is a bi-infinite stack of $n \times n$ identity matrices, stacking the vector $\vy$ repeatedly. To describe the evolution of the observables for an arbitrary initial condition $\vy_0$ at $t_0 = 0$, the action of the Koopman operator on $\vg$ can be written as 
\begin{align}\label{eq:background:KOP:Phi}
	\left(\cK^{t} \vg\right)(0, \vy_0) 
	= \vg(t, \vy(t))
	= 
	\left(
	\begin{array}{r}
		\vdots \\
		\ex^{\ic \omega t}\vPh(t) \vy_0 \\
		\vPh(t) \vy_0 \\
		\ex^{- \ic \omega t}\vPh(t) \vy_0\\
		\vdots
	\end{array}
	\right)
	= \ex^{-\ic \omega \vD_{\infty} t} 
	\underbrace{
	\left(
	\begin{array}{c}
		\vdots\\
		\vPh(t)\\
		\vPh(t)\\
		\vPh(t)\\
		\vdots
	\end{array}
	\right) 
	}_{=: \vQ_{\infty}(t)}
	\vy_0\;.
\end{align}
The matrix $\vQ_{\infty}(t)$ collects a bi-infinite stack of fundamental solution matrices, each of which evolves the initial condition pertaining to a certain frequency.
From Equation~\eqref{eq:background:observable_deriv}, the Lie derivative of the bi-infinite basis vector $\vg$ is 
\begin{align}\label{eq:background:Hillmat}
	\dot{\vg} = \begin{pmatrix}
		\ddots & \\
		\dots & \vJ_0 + \ic \omega \vI & \vJ_{-1} & \vJ_{-2} & \dots \\
		\dots & \vJ_1 & \vJ_0 & \vJ_{-1} & \dots \\
		\dots & \vJ_2 & \vJ_1 & \vJ_0 - \ic \omega \vI & \dots \\
		& & & & \ddots
	\end{pmatrix} \vg =: \vH_{\infty} \vg \;,
\end{align}
which is linear in $\vg$. The bi-infinite block matrix $\vH_{\infty}$ of the above derivative expression, constructed from the Fourier coefficients of~$\vJ(t)$, is well-known in the study of LTP systems as the bi-infinite Hill matrix~\cite{Lazarus2010,Magnus1966}. Its point spectrum consists of the Floquet exponents, while its continuous and residual spectra are either empty or contain only~$\left\{ \pm \infty \right\}$~\cite{Zhou2004}. The operator $\vH_{\infty} $ is unbounded in~$\ell_2$ due to the $\ic \omega$ terms on the diagonal, a common feature of Koopman lifts not expressed in a finite basis of eigenfunctions~\cite{Brunton2021,Budisic2012}.

Consider a bi-infinite initial condition $\vz_0 \in \ell_{\infty}$ that is consistent with $\vg$, meaning that there exist~$t_0$ and~$\vy_0$ such that $\vz_0 = \vg(t_0, \vy_0)$. Substituting $\vg$ with $\vz \in \ell_{\infty}$ in Equation~\eqref{eq:background:Hillmat} yields the Cauchy problem
\begin{align}\label{eq:background:koopman:zdot}
	\dot{\vz} = \vH_{\infty} \vz && \vz(t_0) = \vz_0 = \vg(t_0, \vy_0) \;.
\end{align}

However, for every consistent initial condition $\vz_0\neq\vzero$ one has $\norm{\vH_{\infty}\vz_0}_{\infty}=\infty$, so the Cauchy problem~\eqref{eq:background:koopman:zdot} is ill-posed. 
This issue is typically addressed in the Koopman framework by projecting the Koopman generator onto a finite-dimensional subspace, which necessarily introduces truncation errors~\cite{Mauroy2020a}. In general cases, this truncation may completely change the problem at hand and there is no guarantee that the result is accurate. However, there exists a multitude of examples in literature involving Dynamic Mode Decomposition that show good accuracy despite the truncation, e.g.~\cite{Brunton2021}.

Specifically for the linear time-periodic case discussed here, the truncation entails that only the $2N+1$ central observable blocks $\vg_{-N}, \dots, \vg_N$ are retained, and all other blocks in Equation~\eqref{eq:background:observable_deriv} are discarded. The bi-infinite Hill matrix $\vH_{\infty}$ is thus replaced by its finite-dimensional truncation $\vH \in \Cspace^{n(2N+1) \times n(2N+1)}$, given by its $(2N+1)$ central row and column blocks. For instance, for $N = 2$, the truncated Hill matrix is 
\begin{align}\label{eq:background:H:finite}
	\vH = \left(\begin{array}{lllll}
		  \graybox[75]{\vJ_0}{\vJ_0} + 2 \ic \omega \vI 
		& \graybox[55]{\vJ_{-1}}{\vJ_0} 
		& \graybox[35]{\vJ_{-2}}{\vJ_0} 
		& \graybox[15]{\vJ_{-3}}{\vJ_0}
		& \graybox[0]{\vJ_{-4}}{\vJ_0} 
		\\
		  \graybox[55]{\vJ_1}{\vJ_0} 
		& \graybox[75]{\vJ_0}{\vJ_0} + \dashedbox{\ic \omega \vI} 
		& \graybox[55]{\vJ_{-1}}{\vJ_0} 
		& \graybox[35]{\vJ_{-2}}{\vJ_0} 
		& \graybox[15]{\vJ_{-3}}{\vJ_0} 
		\\
		  \graybox[35]{\vJ_2}{\vJ_0}
		& \graybox[55]{\vJ_1}{\vJ_0}
		& \graybox[75]{\vJ_0}{\vJ_0} \vphantom{+ \dashedbox{\ic \omega \vI}}
		& \graybox[55]{\vJ_{-1}}{\vJ_0} 
		& \graybox[35]{\vJ_{-2}}{\vJ_0} 
		\\
		  \graybox[15]{\vJ_3}{\vJ_0} 
		& \graybox[35]{\vJ_2}{\vJ_0} 
		& \graybox[55]{\vJ_1}{\vJ_0} 
		& \graybox[75]{\vJ_0}{\vJ_0} - \dashedbox{\ic \omega \vI} 
		& \graybox[55]{\vJ_{-1}}{\vJ_0} 
		\\
		  \graybox[0]{\vJ_4}{\vJ_0} 
		& \graybox[15]{\vJ_3}{\vJ_0} 
		& \graybox[35]{\vJ_2}{\vJ_0} 
		& \graybox[55]{\vJ_1}{\vJ_0} 
		& \graybox[75]{\vJ_0}{\vJ_0} - \dashedbox{2 \ic \omega \vI}
	\end{array} \right) \;,
\end{align}
where Fourier coefficient matrices $\vJ_k$ up to $k = \pm 2N = \pm 4$ appear. The background boxes in Equation~\eqref{eq:background:H:finite} indicate repeated blocks of $\vH$ and highlight its block-Toeplitz-like structure. 
The consistently initialized truncated initial value problem, involving the truncated matrix $\vH$ instead of the bi-infinite operator $\vH_{\infty}$, thus becomes
\begin{align}\label{eq:background:koopman:zdot:finite}
	\dot{\vz} = \vH \vz && \vz(t_0) = [\vg_{-N}(t_0, \vy_0)\T, \dots, \vg_N(t_0, \vy_0)\T]\T \;.
\end{align}
By Equation~\eqref{eq:background:observable}, consistent truncated initial conditions are linear in $\vy_0$ and given by 
\begin{align}\label{eq:background:init}
	\vz(t_0) = \ex^{- \ic \omega \vD t_0} \vW \vy_0 =: \vz_0\;,
\end{align}
where $\vW$ is a vertical stack of $2N+1$ identity matrices and $\vD = \diag \left(-N, \dots, N \right) \otimes \vI$ is a diagonal matrix that incorporates the initial time. They are the finite truncations of $\vW_{\infty}$ and $\vD_{\infty}$, respectively. If $t_0 = 0$, then $\vz(0) = \vW \vy_0$. For the case $N = 2$, the $\vW$ and $\vD$ matrices associated with $\vH$ from Equation~\eqref{eq:background:H:finite} are
\begin{align}
	\vD = 
	\left(
	\begin{array}{lllll}
	 - 2 \vI \\
	 & - \vI \\
	 && \vzero \\
	 &&&  \vI \\
	 &&&& 2 \vI
	\end{array}
	\right) \;, && \vW = 
	\left(
	\begin{array}{l}
		\vI \\
		\vI \\
		\vI \\
		\vI \\
		\vI
	\end{array}
	\right) \;, && \vz(0)= \begin{pmatrix} 
	\vy_0\\
	\vy_0\\
	\vy_0\\
	\vy_0\\
	\vy_0\\
\end{pmatrix} \;.
\end{align}

Unlike the bi-infinite case, Equation~\eqref{eq:background:koopman:zdot:finite} is a well-defined finite linear homogeneous initial value problem with $n(2N+1)$ states. Its solution at time $t$ is given by the matrix exponential 
\begin{align}
	\vz(t) = \exp(\vH  (t - t_0)) \, \vW \vy_0 \;.
\end{align} 
In the absence of errors due to truncation, the evolution of $\vz(t)$ generated by the truncated infinitesimal Koopman generator would coincide with the evolution of the $2N+1$ innermost entries of $\vg$ under the Koopman operator, according to Equation~\eqref{eq:background:KOP:Phi}. In practice, one hopes that this relationship is approximately preserved for sufficiently large $N$~\cite{Budisic2012}. This idea is visualized in the left half of Figure~\ref{fig:background:flowchart}: Evolving the state using the LTP system of Equation~\eqref{eq:background:ode} and applying the observable function afterwards should yield approximately the same results as applying the observable function to the initial state and evolving the result under the truncated lifted LTI dynamics of Equation~\eqref{eq:background:koopman:zdot:finite}. Previous numerical studies have shown that, indeed, at least the difference between the centermost blocks of $\vz(t)$ and $\vg(t, \vy(t))$ does tend to zero as the truncation order~$N$ increases~\cite{Bayer2023,Bayer2024,McGurk2025}. 

In the right half of Figure~\ref{fig:background:flowchart}, both the vectors $\vg(t, \vy(t))$ in the upper row and $\vz(t)$ in the lower row are pre-multiplied by $\ex^{\ic \vD \omega t}$. This operation does not alter the truncation error, but it removes the exponential scaling factor from each block of $\vg$, cf.\ Equation~\eqref{eq:background:KOP:Phi}. Each block of the resulting matrix contains $\vy(t)$ (upper row) or an approximation of $\vy(t)$ (lower row). Hereby, all the individual approximations need not be equal and may have differing truncation errors. 

\begin{figure}
	\centering
	\includegraphics[width=\textwidth]{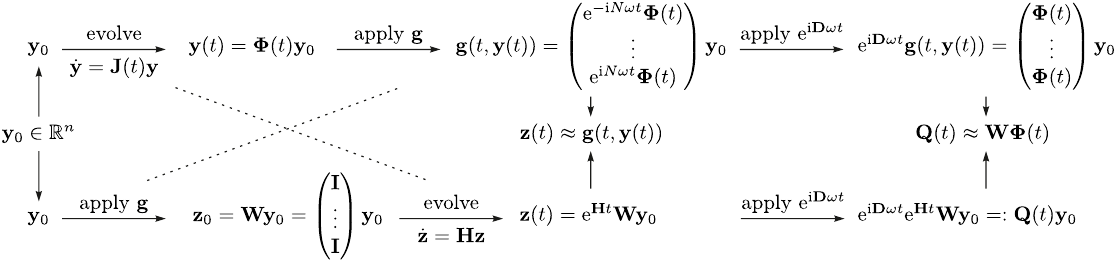}
	\caption{Schematic of the Koopman-Hill projection method. The top row illustrates the evolution of $\vy_0$ under the original LTP dynamics and subsequent application of $\vg$ to the evolved state, while the bottom row shows the evolution using the truncated lifted LTI dynamics after application of $\vg$ to the initial condition. If the truncation error is small, the bottom row approximates the top row.}
	\label{fig:background:flowchart}
\end{figure}

The final step is to generalize from the evolution of a single initial condition $\vy_0$ to the fundamental solution matrix. This generalization is straightforward. The fundamental solution matrix $\vPh(t)$ is defined as the solution of Equation~\eqref{eq:background:ode:matrix} with the initial condition $\vy(0) = \vI$. Therefore, $\vy_0$ can be replaced by $\vI$, or simply omitted from all equations in Figure~\ref{fig:background:flowchart}. The resulting matrix 
\begin{align}\label{eq:proof:KoopHill:Q_defin}
	\vQ(t) := \begin{pmatrix} \vQ_{-N}(t)\\ \vdots\\ \vQ_{N}(t) \end{pmatrix} = \ex^{\ic \omega \vD t}  \ex^{\vH t} \vW 
\end{align}
consists of $2N+1$ blocks $\vQ_j(t)$ for $j = -N, \dots, N$, each of size $n \times n$. When the truncation error is small and the top and bottom rows of Figure~\ref{fig:background:flowchart} approximate each other, $\vQ(t)$ is an approximation of the centermost blocks of $\vQ_{\infty}$, defined in Equation~\eqref{eq:background:KOP:Phi}. Hence, each $n \times n$ block of $\vQ(t)$ provides an approximation of the fundamental solution matrix $\vPh(t)$ with yet-undetermined approximation error. In particular, the central block $\vQ_0(t)$ can be extracted by matrix multiplication, yielding the following approximation for the monodromy matrix:
\begin{subequations}\label{eq:background:KoopHill}
	\begin{align}
		\vPh_T &\approx \vC \, \ex^{\vH T} \, \vW \\
		\vC & = \begin{pmatrix}\vzero & \dots & \vzero & \vI & \vzero & \dots & \vzero\end{pmatrix}\\
		\vW & = \begin{pmatrix}\vI & \dots & \vI \end{pmatrix}\T \;.
	\end{align}
\end{subequations}
This formulation was introduced in~\cite{Bayer2023} as the Koopman-Hill projection method. We emphasize that, in prior work, only numerical evidence was presented for the accuracy of this method and the truncation error has not been theoretically quantified. The present work addresses this gap by providing a closed-form bound for the truncation error for a certain class of linear time-periodic systems.

\subsection{Multi-index notation}\label{sec:background:multiindex}
For the sake of brevity, we introduce here two different types of index notation, which are used throughout this work. 
The natural numbers $\Nspace$ always include zero. 
We 
denote $1$-norms by $\abs{.}$ (with one vertical bar), even for tuples with more than one entry. 
By $\norm{.}$ (with two vertical bars) we denote an arbitrary vector $p$-norm and its induced matrix norm. 
\begin{defin}[Multi-index]\label{def:multiindex}
	A tuple $\val \in \Nspace^m$ of $m$ nonnegative integers is called \emph{multi-index (in the classical sense)}. The multi-index has a 1-norm
	\begin{align}
		\abs{\val} := \sum_{k = 1}^m \alpha_k 
	\end{align}
	given by the sum of its entries. For a vector $\vx \in \Cspace^m$, we denote by
	\begin{align}\label{eq:background:multiindex}
		\vx^{\val} := \prod_{k = 1}^m x_k^{\alpha_k}
	\end{align}
	monomials in $\vx$ of degree $\abs{\val}$. We define $0^0 := 1$ to avoid restricting the values that $\vx$ may admit.
\end{defin}
\begin{exmp}
	The multi-index $\val = [2, 0, 1, 0]$ has norm $\abs{\val}$ = $3$ and $\vx^{\val} = x_1^2 x_3$ for all $\vx = [x_1, x_2, x_3, x_4] \in \Cspace^4$. 
\end{exmp}

\begin{defin}[Products of Fourier coefficient matrices]\label{def:intindex}
	A tuple $\vp \in \Zspace^m$ of $m$ integers is called an \emph{integer index tuple}. It has a 1-norm
	\begin{align}
		\abs{\vp} := \sum_{k = 1}^m \abs{p_k} \;,
	\end{align}
	which is generally not equal to the sum of its entries. Given a bi-infinite sequence of Fourier coefficient matrices $\left( \vJ_k\right)_{k \in \Zspace}$, any integer index tuple $\vp \in \Zspace^m$ uniquely describes an (ordered) \emph{product of $m$ Fourier coefficient matrices}
	\begin{align}\label{eq:background:def_Jp}
		\cJ_\vp := \prod_{k = 1}^m \vJ_{p_k} = \vJ_{p_1} \vJ_{p_2} \dots \vJ_{p_m} \;.
	\end{align}
	The ordering of the entries of $\vp$ is important as the matrices $\vJ_k$ generally do not commute. 
\end{defin}
\begin{exmp}
	Consider as an example the integer index tuple $\vp = [-3, 0, 1, 1]$, which defines uniquely the product of Fourier coefficients
	\begin{align*}
		\cJ_{[-3, 0, 1, 1]} = \vJ_{-3} \,\vJ_0 \,\vJ_1\, \vJ_1 = \vJ_{-3} \,\vJ_0 \,\vJ_1^2 \;.
	\end{align*}
	As $\Zspace^m$ is a subset of $\Cspace^m$, integer index tuples can be exponentiated by multi-indices using Equation~\eqref{eq:background:multiindex}. With $\val = [2,0,1,0]$ and $\vp = [-3, 0, 1, 1]$ as before, we obtain
	\begin{align*}
		\vp^{\val} = [-3, 0, 1, 1]^{[2, 0, 1, 0]} = (-3)^2 \cdot 0^0 \cdot 1^1 \cdot 1^0 = 9
	\end{align*}
	and even
	\begin{align*}
		\vp^{\val} \cJ_{\vp} = 9 \, \vJ_{-3} \, \vJ_{0} \, \vJ_{1}^2 \;.
	\end{align*}
\end{exmp}
\begin{exmp}
	For an integer index tuple $\vp = [p_1, \dots, p_m] \in \Zspace^m$ and a multi-index $\val = [\alpha_1, \dots, \alpha_m] \in \Nspace^m$, a specific multi-index expression that will become important later is
	\begin{align}
		[p_1, p_1 + p_2, \dots, p_1 + \dots + p_m]^{\val} = \prod_{k = 1}^{m} \left(\sum_{l = 1}^k p_l \right)^{\alpha_k} = p_1^{\alpha_1} (p_1 + p_2)^{\alpha_2} \dots (p_1 + \dots + p_m)^{\alpha_m}\;,
	\end{align}
	consisting of a product of $m$ factors, where the $k$-th factor is given by the sum of the first $k$ entries of the integer index tuple, exponentiated by $\alpha_k$. 
\end{exmp}

\section{Main result: Error bound}\label{sec:overview}
Section~\ref{sec:background:KoopLift} motivated that the matrix $\vQ(t)$, defined in Equation~\eqref{eq:proof:KoopHill:Q_defin} as $\vQ(t) := \ex^{\ic \omega \vD t}\ex^{\vH t}\vW$ with $\vH$ being the truncated Hill matrix, provides approximations of the fundamental solution matrix $\vPh(t)$. More specifically, the matrix $\vQ(t)$ consists of $2N+1$ blocks $\vQ_j(t)$, for $j = -N, \dots, N$. Each block~$\vQ_j(t)$ of size $n \times n$ is expected to approximate the fundamental solution matrix $\vPh(t)$.
However, the motivation in Section~\ref{sec:background:KoopLift}, based on the Koopman framework, does not provide any insight into the magnitude of the error incurred by truncation. In principle, this error could become very large and render the method useless. Thus, the Koopman-based motivation for Equation~\eqref{eq:background:KoopHill} is not sufficient to prove that $\vQ_{j}(t) \approx \vPh(t)$ is indeed the case.

The central contribution of the present work is to strengthen the Koopman-Hill projection method, i.e., justify that Equation~\eqref{eq:background:KoopHill} approximates the fundamental solution matrix, by providing a closed-form bound for the difference between the central block $\vQ_0(t)$ of Equation~\eqref{eq:proof:KoopHill:Q_defin} and the fundamental solution matrix $\vPh(t)$. The approach taken here does not rely on methodology from the Koopman framework. Rather, the idea is to find exact series expressions for the considered matrices $\vQ_0(t)$ and $\vPh(t)$, compare them term by term, and bound the difference. For both the true fundamental solution matrix and the Koopman-Hill approximation, it is natural to consider Taylor series expansions around the initial time $t_0 = 0$.  

For the Taylor series of $\vPh(t)$, the values of the true fundamental solution and its first derivative at $t_0 = 0$ are immediately known from the matrix initial value problem of Equation~\eqref{eq:background:ode:matrix}. Further derivatives at $0$ can be found inductively by the product rule as polynomials of derivatives of $\vJ(t)$, evaluated at zero. With $\vJ(t)$ and all its derivatives expressed as Fourier series, a combined Taylor-Fourier series expression for $\vPh(t)$, characterized by products of the Fourier coefficients of $\vJ(t)$, can be established. This constructive process is described in more detail in~\ref{sec:app:construction}. The absolute convergence of the resulting series expression can be ensured by the following assumption, which in some sense characterizes the dynamical system under consideration.
\begin{assu} \label{assu:b}
	There are constants $a, b > 0$ and a matrix $p$-norm $\norm{\cdot}$ such that the Fourier coefficient matrices of $\vJ(t)$ in Equation~\eqref{eq:background:ode} decay exponentially with
	\begin{align}
		\norm{\vJ_k} \leq a \ex^{-b \abs{k}}&& \text{for all~} k \in \Zspace \;. \label{eq:proof:assu_b}
	\end{align}		
\end{assu}
By the Paley-Wiener theorem, Assumption~\ref{assu:b} holds if $\vJ(t)$ is analytic~\cite[Lemma 5.6]{Broer2011}. 
In the context of stability of periodic solutions of nonlinear dynamics, analyticity of a nonlinear differential equation in all its arguments is sufficient to guarantee analyticity of its periodic solution and thus analyticity of the linearization~$\vJ(t)$~\cite[Thm.~4.1]{Teschl2012}. 
In the special case where the Fourier coefficients of $\vJ(t)$ have finite support, Assumption~\ref{assu:b} is satisfied for any $b > 0$ with $a = \max_k \norm{\vJ_k} \ex^{b \abs{k}}$.

Our first theorem establishes the Taylor-Fourier series for the fundamental solution matrix $\vPh(t)$. 

	\begin{theorem}[Series formulation of~$\vPh(t)$]\label{thm:proof:Phi_series:xi}
	If the Fourier coefficient matrices of $\vJ(t)$ in Equation~\eqref{eq:background:ode} fulfill Assumption~\ref{assu:b}, then the fundamental solution matrix is given by the absolutely convergent series
	\begin{align}
		\vPh(t) = \vI + \sum_{m = 1}^\infty \sum_{\vp \in \Zspace^{m}} \xi_{\vp}(t) \cJ_{\vp} \label{eq:proof:Phi_series:xi}
	\end{align}
	with the scalar, complex-valued factor
	\begin{align}\label{eq:lem:proof:xi_p:xi}
		\xi_{\vp}(t) :=  \sum_{\val \in  \Nspace^m} \frac{t^{m + \abs{\val}}}{(m + \abs{\val})!} (\ic \omega)^{\abs{\val}}\left[ p_1, p_1 + p_2, \dots, p_1 + \dots + p_m\right]^{\val} 
	\end{align}
	that is independent of the specific dynamical system and bounded by  
	\begin{align}\label{eq:lem:xi_p:bound}
		\abs{\xi_{\vp}(t)} \leq \frac{\abs{t}^m}{m!} \;.
	\end{align}
	
\end{theorem}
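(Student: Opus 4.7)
The plan is to derive the series from the Peano--Baker (time-ordered exponential) representation of the fundamental solution matrix. Under Assumption~\ref{assu:b}, $\vJ(t)$ is continuous, so the classical iterated-integral formula
\begin{equation*}
	\vPh(t) = \vI + \sum_{m=1}^{\infty} \int_{0 < s_m < \dots < s_1 < t} \vJ(s_1) \vJ(s_2) \cdots \vJ(s_m) \, \diff s_m \cdots \diff s_1
\end{equation*}
holds and converges absolutely on compact $t$-intervals. Substituting $\vJ(s_j) = \sum_{p_j \in \Zspace} \vJ_{p_j} \ex^{\ic p_j \omega s_j}$ into each factor and interchanging the Fourier summation with the integration -- which I will justify by dominated convergence using the geometric majorant from Assumption~\ref{assu:b} -- yields
\begin{equation*}
	\vPh(t) = \vI + \sum_{m=1}^{\infty} \sum_{\vp \in \Zspace^m} \cJ_{\vp} \, I_{\vp}(t),
\end{equation*}
where $I_{\vp}(t)$ is the integral of $\exp(\ic \omega \sum_j p_j s_j)$ over the time-ordered simplex. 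It then remains to show $I_{\vp}(t) = \xi_{\vp}(t)$ and to bound the result.

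I would next perform the affine change of variables $v_k := s_k - s_{k+1}$ (with $s_{m+1} := 0$), which has unit Jacobian and maps the ordered simplex to the standard simplex $\{v_k \geq 0,\; \sum_k v_k \leq t\}$. The exponent rewrites as
\begin{equation*}
	\sum_{j=1}^m p_j s_j = \sum_{k=1}^m (p_1 + \cdots + p_k)\, v_k,
\end{equation*}
so the integrand factors as $\prod_k \exp(\ic \omega P_k v_k)$ with $P_k := p_1 + \cdots + p_k$. Expanding each factor into its Taylor series in $v_k$ and evaluating the resulting monomial integrals via the Dirichlet formula
\begin{equation*}
	\int_{\sum v_k \leq t,\, v_k \geq 0} \prod_{k=1}^m v_k^{\alpha_k}\, \diff v = \frac{t^{m + \abs{\val}}\prod_k \alpha_k!}{(m + \abs{\val})!}
\end{equation*}
cancels the $\prod_k \alpha_k!$ and reproduces precisely the multi-index expression for $\xi_{\vp}(t)$ given in Equation~\eqref{eq:lem:proof:xi_p:xi}.

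The bound $\abs{\xi_{\vp}(t)} \leq \abs{t}^m/m!$ is then immediate from the integral representation: the integrand of $I_{\vp}(t)$ has modulus $1$, so the triangle inequality applied to the iterated integral yields the volume $\abs{t}^m/m!$ of the simplex. Absolute convergence of the full double series follows by combining this bound with Assumption~\ref{assu:b}: submultiplicativity of the matrix $p$-norm gives $\norm{\cJ_{\vp}} \leq a^m \ex^{-b \abs{\vp}}$, which sums over $\vp \in \Zspace^m$ to a geometric factor $C^m$ with $C := a(1+\ex^{-b})/(1-\ex^{-b})$, and the remaining series is dominated by $\sum_{m \geq 0} (C\abs{t})^m/m! = \ex^{C\abs{t}} < \infty$.

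The main technical obstacle I foresee is the rigorous justification of swapping the bi-infinite Fourier sums with the iterated integrals in the first step; once a uniform absolute majorant on $[0,t]$ is in hand from Assumption~\ref{assu:b}, Fubini--Tonelli together with dominated convergence close the gap without difficulty. The remaining steps are computational reorganizations of the Dirichlet integral which, despite the notational weight of the multi-index machinery from Section~\ref{sec:background:multiindex}, are standard.
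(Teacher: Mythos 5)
Your argument is correct, and it takes a genuinely different route from the paper. The paper proves Theorem~\ref{thm:proof:Phi_series:xi} non-constructively: it first establishes the bound~\eqref{eq:lem:xi_p:bound} by rearranging the multi-index series for $\xi_{\vp}$ into a product of exponential power series (Lemma~\ref{lem:proof:xi_p}), then proves the recursion $\dot{\xi}_{\vp}(t) = \xi_{[p_2,\dots,p_m]}(t)\,\ex^{\ic p_1 \omega t}$ through a fairly lengthy chain of binomial-coefficient and Vandermonde-type manipulations (Lemma~\ref{lem:proof:xi_p:deriv}), and finally verifies that the series solves the matrix initial value problem~\eqref{eq:background:ode:matrix}, invoking uniqueness. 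You instead start from the Peano--Baker iterated-integral representation, substitute the Fourier series into each factor, and identify $\xi_{\vp}(t)$ as the integral of $\exp(\ic\omega\sum_j p_j s_j)$ over the time-ordered simplex; the change of variables $v_k = s_k - s_{k+1}$ turns the phase into $\sum_k (p_1+\dots+p_k)v_k$, and the Dirichlet integral reproduces~\eqref{eq:lem:proof:xi_p:xi} exactly. What your approach buys is twofold: the formula for $\xi_{\vp}$ is \emph{derived} rather than verified (the paper only sketches a constructive origin in~\ref{sec:app:construction}, and by a different Taylor-expansion argument), and the bound $\abs{\xi_{\vp}(t)}\leq\abs{t}^m/m!$ falls out immediately as the volume of the simplex, replacing the computation in Lemma~\ref{lem:proof:xi_p}. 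Your recursion check is implicit too: differentiating the iterated integral in its outermost variable recovers exactly the relation of Lemma~\ref{lem:proof:xi_p:deriv}. The only points requiring care, which you correctly flag, are the Fubini--Tonelli justifications for interchanging the bi-infinite Fourier sums with the simplex integrals and the Taylor expansions with the integration; both are covered by the uniform majorant $a\ex^{-b\abs{p}}$ from Assumption~\ref{assu:b} on the compact simplex, so the argument is complete. Your constant $C = a(1+\ex^{-b})/(1-\ex^{-b})$ in the final convergence estimate differs harmlessly from the paper's $2a/(1-\ex^{-b})$, as both arise from bounding $\sum_{p\in\Zspace}\ex^{-b\abs{p}}$.
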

\noindent Theorem~\ref{thm:proof:Phi_series:xi} is proven in Section~\ref{sec:series:proof}. The fundamental solution matrix is expressed here in terms of ordered monomials $\cJ_{\vp}$ of Fourier coefficients of the system matrix $\vJ(t)$, with each such monomial multiplied by a scalar factor $\xi_{\vp}(t)$ that only depends on the time~$t$ and the degree~$\vp$ of the monomial, and not on the considered system. Figure~\ref{fig:proof:xi_p} presents the values of the scalar factor $\xi_{\vp}$ for three different values of $\vp$, exemplifying three central properties: $\xi_{\vp}$ is  complex-valued, of oscillatory nature, and polynomially bounded.  For different values of $\vp$, the values of $\xi_{\vp}$ can be of vastly differing magnitude. 

\begin{figure}[hbt]
	\centering
	\begin{subfigure}[t]{0.325\textwidth}
		\centering
		\includegraphics{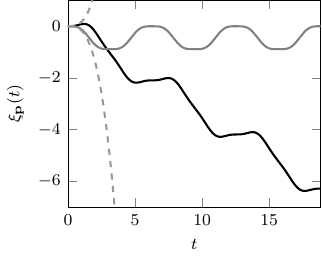}
		\caption{$\vp = [-1, -2, 3]$}
		\label{fig:proof:xi_p:1}
	\end{subfigure}
	\begin{subfigure}[t]{0.325\textwidth}
		\centering
		\includegraphics{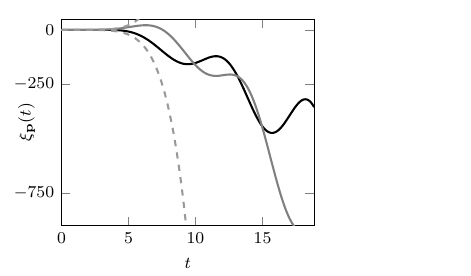}
		\caption{$\vp = [1, -1, 0, 1, -1, 1]$}
		\label{fig:proof:xi_p:2}
	\end{subfigure}
	\begin{subfigure}[t]{0.325\textwidth}
		\centering
		\includegraphics{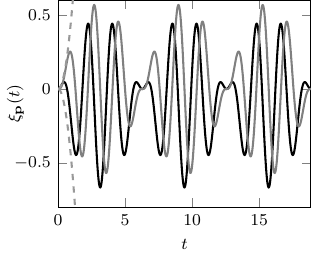}
		\caption{$\vp = [4, -1]$}
		\label{fig:proof:xi_p:3}
	\end{subfigure}
	\caption{Real part (black), imaginary part (gray), and polynomial upper bound (dashed) of the scalar factor $\xi_{\vp}(t)$ over three periods for three exemplary values of $\vp$. Data was computed using Lemma~\ref{lem:proof:xi_p:deriv} and numerical quadrature with trapezoidal rule.}
	\label{fig:proof:xi_p}
\end{figure}

The proof of Theorem~\ref{thm:proof:Phi_series:xi} in Section~\ref{sec:series:proof} relies on derivative properties of the scalar factor $\xi_{\vp}$ to show that the series in Equation~\eqref{eq:proof:Phi_series:xi} uniquely solves the matrix initial value problem~\eqref{eq:background:ode:matrix} and must thus be the fundamental solution matrix. However, this approach is not constructive and does not explain why the series is of its specific structure. For additional insight,~\ref{sec:app:construction} sketches an alternative, more illustrative derivation of Equations~\eqref{eq:proof:Phi_series:xi} and~\eqref{eq:lem:proof:xi_p:xi} based on the Taylor expansion of $\vPh(t)$ and the Fourier series of $\vJ(t)$.

Similarly, a series expression for the blocks of the matrix $\vQ(t)$, which are expected to approximate the fundamental solution matrix according to Figure~\ref{fig:background:flowchart} and Equation~\eqref{eq:background:KOP:Phi}, can be derived by expressing Equation~\eqref{eq:proof:KoopHill:Q_defin} as a Taylor series. Since the matrix $\vH$ consists of the Fourier coefficients of $\vJ(t)$ (cf.\ Equation~\eqref{eq:background:Hillmat}), the summands of the resulting Taylor series are again characterized by polynomials of these Fourier coefficients. 

\begin{theorem}[Series expression for the Koopman-Hill approximations $\vQ(t)$]\label{thm:proof:Q}
	For any index $j \in \left\{ -N, \dots, N\right\}$, the $j$-th $n\times n$-block of the matrix $\vQ(t)$ defined in Equation~\eqref{eq:proof:KoopHill:Q_defin} is given by the series
	\begin{align}\label{eq:proof:Q:series}
		\vQ_j(t) = \vI + \sum_{m = 1}^\infty \sum_{\vp \in \cP_j^{(m)}} \xi_{\vp}(t) \cJ_{\vp} \;,
	\end{align}
	where
	\begin{align}\label{eq:proof:Pj}
		\cP_j^{(m)} = \left\{ \vp \in \Zspace^m : \abs{j - \sum_{l = 1}^w p_l} \leq N \text{~for all~} w = 1, \dots, m\right\}
	\end{align} 
	are sets of eligible integer index tuples (i.e., tuples which appear in the series) and $\xi_{\vp}(t)$ is defined in Equa\-tion~\eqref{eq:lem:proof:xi_p:xi} of Theorem~\ref{thm:proof:Phi_series:xi}. The series~\eqref{eq:proof:Q:series} is absolutely convergent for all $t \in \Rspace$ if all Fourier coefficients $\left\{\vJ_k\right\}_{k = -2N, \dots, 2N}$ that occur in $\vH$ are bounded.
\end{theorem}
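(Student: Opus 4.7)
The plan is to imitate the uniqueness-based strategy of Theorem~\ref{thm:proof:Phi_series:xi}: first identify the linear initial value problem satisfied by the stack $\vQ(t)$, then verify that the right-hand side of Equation~\eqref{eq:proof:Q:series} solves the same IVP. Differentiating $\vQ(t) = \ex^{\ic\omega\vD t}\ex^{\vH t}\vW$ yields $\dot{\vQ}(t) = \bigl(\ic\omega\vD + \ex^{\ic\omega\vD t}\vH\ex^{-\ic\omega\vD t}\bigr)\vQ(t)$. The conjugation acts block-wise on the block-Toeplitz part of $\vH$, so the $(i,k)$-block of $\ex^{\ic\omega\vD t}\vH\ex^{-\ic\omega\vD t}$ equals $\ex^{\ic\omega(i-k)t}\vJ_{i-k}$ for $i\neq k$, while on the diagonal the $-i\,\ic\omega\,\vI$ part of $\vH_{ii}$ cancels $\ic\omega\vD$, leaving $\vJ_0$. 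Extracting the $j$-th row-block and using that $\vW$ has identity blocks so the column sum produces $\vQ_i(t)$, one obtains the coupled LTP system
\[
\dot{\vQ}_j(t) = \sum_{i=-N}^{N}\vJ_{j-i}\,\ex^{\ic\omega(j-i)t}\,\vQ_i(t), \qquad \vQ_j(0) = \vI.
\]

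Next I would define the candidate $\vQ_j^{*}(t) := \vI + \sum_{m\geq 1}\sum_{\vp\in\cP_j^{(m)}}\xi_\vp(t)\,\cJ_\vp$ and verify absolute, locally uniform convergence of both the series and its formal termwise derivative. The partial-sum constraints defining $\cP_j^{(m)}$ force $|p_l|\leq 2N$ for each entry, so $|\cP_j^{(m)}|\leq (4N+1)^m$; combined with $|\xi_\vp(t)|\leq|t|^m/m!$ from Theorem~\ref{thm:proof:Phi_series:xi} and $\|\cJ_\vp\|\leq M^m$ with $M:=\max_{|k|\leq 2N}\|\vJ_k\|<\infty$, both $\vQ_j^{*}(t)$ and its candidate derivative are dominated by the convergent scalar series $\sum_m ((4N+1)M|t|)^m/m!$, which justifies differentiation under the sum.

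The crux is then to show that $\vQ_j^{*}$ satisfies the same ODE. Using the derivative identity $\dot{\xi}_\vp(t) = \ex^{\ic\omega p_1 t}\,\xi_{(p_2,\dots,p_m)}(t)$ (Lemma~\ref{lem:proof:xi_p:deriv}, established as part of Theorem~\ref{thm:proof:Phi_series:xi}) and the factorization $\cJ_\vp = \vJ_{p_1}\,\cJ_{(p_2,\dots,p_m)}$, one fixes $p_1$ and reshapes the outer sum. The decisive combinatorial observation is that whenever $|j-p_1|\leq N$, the map $\vp\mapsto\vp':=(p_2,\dots,p_m)$ is a bijection between $\{\vp\in\cP_j^{(m)} : \text{first entry }p_1\}$ and $\cP_{j-p_1}^{(m-1)}$: the $w=1$ constraint becomes exactly $|j-p_1|\leq N$, while the $w\geq 2$ constraints translate into the defining constraints of $\cP_{j-p_1}^{(m-1)}$ applied to $\vp'$. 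Relabelling $i := j-p_1$ then yields $\dot{\vQ}_j^{*}(t) = \sum_{i=-N}^{N}\vJ_{j-i}\,\ex^{\ic\omega(j-i)t}\,\vQ_i^{*}(t)$, matching the ODE derived above; the initial condition $\vQ_j^{*}(0)=\vI$ is immediate because $\xi_\vp(0)=0$ for every nonempty $\vp$, since each summand in Equation~\eqref{eq:lem:proof:xi_p:xi} carries a factor $t^{m+|\val|}$ with $m\geq 1$. Uniqueness of solutions to the coupled linear IVP then forces $\vQ_j^{*}(t)=\vQ_j(t)$ for all $t\in\Rspace$. I expect the main technical hurdle to be this combinatorial bookkeeping, in particular verifying cleanly that the partial-sum constraint decomposes when the first entry is fixed; the derivation of the ODE for $\vQ$ and the majorant-series convergence estimates are comparatively routine.
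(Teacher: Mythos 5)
Your proposal is correct and follows essentially the same route as the paper's proof: derive the coupled linear IVP $\dot{\vQ}_j = \sum_{i=-N}^{N}\vJ_{j-i}\ex^{\ic\omega(j-i)t}\vQ_i$ from $\vQ(t)=\ex^{\ic\omega\vD t}\ex^{\vH t}\vW$, establish absolute convergence (the paper argues via a without-loss-of-generality finite-support reduction to Theorem~\ref{thm:proof:Phi_series:xi}, you via the direct majorant $\sum_m((4N+1)M\abs{t})^m/m!$ — both valid), and then verify the candidate series solves the IVP using Lemma~\ref{lem:proof:xi_p:deriv} together with the key set identity $\vp\in\cP_j^{(m)}\iff\abs{j-p_1}\leq N$ and $[p_2,\dots,p_m]\in\cP_{j-p_1}^{(m-1)}$, which is exactly the decomposition the paper checks. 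No gaps.
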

\noindent The proof of Theorem~\ref{thm:proof:Q} is carried out in Section~\ref{sec:series:KoopHill}. 
The series expression for the direct Koopman-Hill projection of Equation~\eqref{eq:background:KoopHill} follows immediately: 
\begin{cor}[Direct Koopman-Hill projection]\label{cor:KHP}
	The direct Koopman-Hill projection approximation as de\-fined in Equation~\eqref{eq:background:KoopHill}, $\vC \ex^{\vH t} \vW = \vQ_0(t)$, is given by the series
	\begin{align}\label{eq:proof:KoopHill:direct}
		\vC \ex^{\vH t} \vW= \vI + \sum_{m = 1}^\infty \sum_{\vp \in \cP_0^{(m)}} \xi_{\vp}(t) \cJ_{\vp} 
	\end{align}
	with the index parallelotopes
	\begin{align}\label{eq:proof:KoopHill:P}
		\cP_0^{(m)} = \left\{ \vp \in \Zspace^m : \abs{\sum_{l = 1}^w p_l} \leq N \text{~for all~} w = 1, \dots, m\right\} \;.
	\end{align} 
\end{cor}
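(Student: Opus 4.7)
The plan is to derive this corollary as an immediate specialization of Theorem~\ref{thm:proof:Q} to the central block $j=0$, combined with an elementary algebraic observation about the role of the extraction matrix~$\vC$. There is essentially no new content to prove; the task is to connect the notational pieces already laid out.

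First I would note, directly from the definition in Equation~\eqref{eq:proof:KoopHill:Q_defin}, that $\vQ(t) = \ex^{\ic \omega \vD t}\, \ex^{\vH t}\, \vW$, and that $\vD = \diag(-N,\dots,N)\otimes \vI$ is block-diagonal. Hence $\ex^{\ic \omega \vD t}$ is also block-diagonal, with the $j$-th diagonal block equal to $\ex^{\ic j \omega t}\vI$. In particular, its central block ($j=0$) is the identity. Multiplying by $\vC = (\vzero,\dots,\vzero,\vI,\vzero,\dots,\vzero)$, which extracts the central $n\times n$ block of a stack of $2N+1$ blocks, we obtain
\begin{align*}
\vC\, \vQ(t) = \vC\, \ex^{\ic \omega \vD t}\, \ex^{\vH t}\, \vW = \vC\, \ex^{\vH t}\, \vW,
\end{align*}
and by the block decomposition in Equation~\eqref{eq:proof:KoopHill:Q_defin}, the left-hand side equals the central block $\vQ_0(t)$.

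Next I would invoke Theorem~\ref{thm:proof:Q} with $j=0$. Its series formula~\eqref{eq:proof:Q:series} then yields
\begin{align*}
\vQ_0(t) = \vI + \sum_{m=1}^{\infty} \sum_{\vp \in \cP_0^{(m)}} \xi_{\vp}(t)\, \cJ_{\vp},
\end{align*}
and substituting $j=0$ into Equation~\eqref{eq:proof:Pj} gives precisely the index set
\begin{align*}
\cP_0^{(m)} = \Bigl\{\vp \in \Zspace^m : \Bigl|\sum_{l=1}^w p_l\Bigr| \leq N \text{ for all } w=1,\dots,m\Bigr\},
\end{align*}
matching Equation~\eqref{eq:proof:KoopHill:P}. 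Absolute convergence for all $t\in\Rspace$ is inherited from Theorem~\ref{thm:proof:Q}, since the Fourier coefficients $\vJ_{-2N},\dots,\vJ_{2N}$ appearing in $\vH$ are bounded.

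Since every ingredient is already established in the preceding theorems and definitions, there is no genuine obstacle. The only subtle point worth flagging explicitly is the justification of $\vC\, \ex^{\ic \omega \vD t} = \vC$, which relies on the block-diagonal structure of $\ex^{\ic \omega \vD t}$ and the fact that the central diagonal entry of $\vD$ vanishes. Everything else is a one-line rewrite.
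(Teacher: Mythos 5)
Your proposal is correct and follows the same route as the paper, which simply states that the corollary is a direct consequence of Theorem~\ref{thm:proof:Q} for $j=0$; your additional verification that $\vC\,\ex^{\ic\omega\vD t} = \vC$ (so that $\vC\,\ex^{\vH t}\,\vW$ indeed equals the central block $\vQ_0(t)$) is a harmless and correct elaboration of a step the paper leaves implicit.
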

\begin{proof}
	This is a direct consequence of Theorem~\ref{thm:proof:Q} for $j = 0$.
\end{proof}

Geometrically, the eligible sets $\cP_j^{(m)}$ form parallelotopes (generalized parallelograms) in $\Zspace^m$, centered at the point $(j, 0, \dots, 0)$. In essence, the true fundamental solution sums over all possible integer index tuples, while the Koopman-Hill approximation only includes those tuples that lie within the corresponding $m$-dimensional parallelotope. Figure~\ref{fig:proof:Pj} illustrates these parallelotopes for the case $m = 2$. 
\begin{figure}[hbtp]
	\centering
	\begin{subfigure}[t]{0.325\textwidth}
		\centering
		\includegraphics{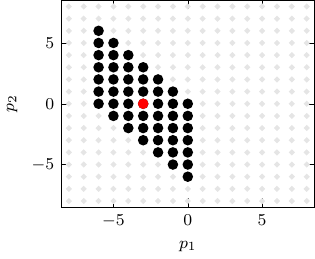}
		\caption{$\cP^{(2)}_{-3}$}
		\label{fig:proof:Pj:m4}
	\end{subfigure}
	\begin{subfigure}[t]{0.325\textwidth}
		\centering
		\includegraphics{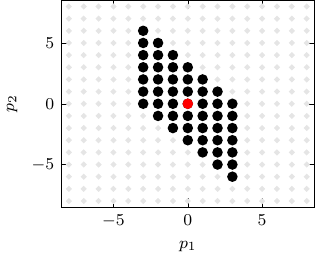}
		\caption{$\cP^{(2)}_{0}$}
		\label{fig:proof:Pj:0}
	\end{subfigure}
	\begin{subfigure}[t]{0.325\textwidth}
		\centering
		\includegraphics{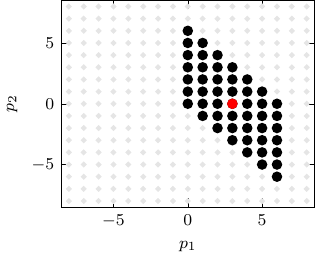}
		\caption{$\cP^{(2)}_{3}$}
		\label{fig:proof:Pj:4}
	\end{subfigure}
	\caption{Visualization of the sets $\cP_j^{(2)}$ with $N = 3$ for different values of $j$. Center element located at $(j, 0)$ is indicated by a red dot.}
	\label{fig:proof:Pj}
\end{figure}

Both series expressions of Equations~\eqref{eq:proof:Phi_series:xi} and~\eqref{eq:proof:KoopHill:direct} for the true fundamental solution matrix and its Koopman-Hill approximation are characterized by the same monomials $\cJ_{\vp}$ and identical scalar factors $\xi_{\vp}(t)$. The only difference is that the Koopman-Hill approximation of Equation~\eqref{eq:proof:KoopHill:direct} is a partial sum of the true fundamental solution matrix of Equation~\eqref{eq:proof:Phi_series:xi} due to the truncation effects, only summing over entries of the eligible parallelotopes $\cP_0^{(m)}$ instead of over the whole $\Zspace^m$. Consequently, the truncation error between both can be expressed as the series over all multi-indices that are \emph{not} included in the eligible parallelotopes, i.e.,
\begin{align}\label{eq:conv:E_def}
	\vE(t) = \vPh(t) - \vC \ex^{\vH t} \vW = \sum_{m = 1}^\infty \sum_{\vp \in \Zspace^m \setminus \cP_0^{(m)}} \xi_{\vp}(t) \cJ_{\vp} \;.
\end{align}
Our main result bounds Equation~\eqref{eq:conv:E_def} to provide a convergence guarantee for the Koopman-Hill ap\-proxi\-ma\-tion. 
\begin{theorem}[Error bound and convergence of the direct Koopman-Hill projection approximation] \label{thm:proof:error}
	Let As\-sump\-tion~\ref{assu:b} hold with $b > \ln 2$. For a fixed truncation order $N \in \Nspace$, the approximation error between the true fundamental solution matrix and the direct Koopman-Hill projection defined in Equation~\eqref{eq:background:KoopHill} is bounded by
	\begin{align}\label{eq:conv:bound}
		\norm{\vPh(t) - \vC \ex^{\vH t} \vW} \leq (2 \ex^{-b})^{N} \left( \ex^{\abs{4 a t}} - 1 \right)\;.
	\end{align}
	This error bound decays exponentially with $N$, providing a convergence guarantee of the Koopman-Hill pro\-jec\-tion. Satisfaction of a desired accuracy $\norm{\vPh(t) - \vC \ex^{\vH t} \vW} \leq E_{\mathrm{des}}$ is guaranteed if the truncation order fulfills
	\begin{align}\label{eq:conv:Nmin}
		N \geq N^* = \frac{\abs{4 a t} + \ln \left( 1 - \ex^{-\abs{4 a t}}\right) - \ln(E_{\mathrm{des}})}{b - \ln 2} \;.
	\end{align}
\end{theorem}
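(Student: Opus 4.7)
The plan is to bound the exact series representation of the error given in Equation~\eqref{eq:conv:E_def} term by term. First, applying the triangle inequality and matrix-norm submultiplicativity gives
\begin{align*}
\norm{\vE(t)} \leq \sum_{m=1}^\infty \sum_{\vp \in \Zspace^m \setminus \cP_0^{(m)}} \abs{\xi_{\vp}(t)} \, \norm{\cJ_{\vp}} \;.
\end{align*}
For each summand I would invoke two ingredients: the polynomial bound $\abs{\xi_{\vp}(t)} \leq \abs{t}^m/m!$ from Theorem~\ref{thm:proof:Phi_series:xi}, and the estimate $\norm{\cJ_{\vp}} \leq a^m \ex^{-b\abs{\vp}}$, which follows by applying Assumption~\ref{assu:b} factor by factor to $\cJ_{\vp} = \vJ_{p_1}\cdots \vJ_{p_m}$, with $\abs{\vp} = \sum_{i=1}^m \abs{p_i}$.

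The key step is a geometric observation that relates the exclusion $\vp \notin \cP_0^{(m)}$ to the size of $\abs{\vp}$: by Equation~\eqref{eq:proof:KoopHill:P}, $\vp \notin \cP_0^{(m)}$ means there exists $w$ with $\bigl|\sum_{l=1}^w p_l\bigr| > N$, and the triangle inequality then forces $\abs{\vp} \geq \bigl|\sum_{l=1}^w p_l\bigr| > N$, i.e.\ $\abs{\vp} \geq N+1$. Enlarging the summation domain from the awkward parallelotope complement to the simpler level set $\{\vp \in \Zspace^m : \abs{\vp} \geq N+1\}$ therefore only loosens the bound, and the weights $\ex^{-b\abs{\vp}}$ decouple coordinate-wise over this larger set.

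To extract the factor $(2\ex^{-b})^N$, I would rewrite the weight as $\ex^{-b\abs{\vp}} = (2\ex^{-b})^{\abs{\vp}} \cdot 2^{-\abs{\vp}}$. Since $b > \ln 2$ gives $2\ex^{-b} < 1$ and $\abs{\vp} \geq N$, this yields the pointwise bound $\ex^{-b\abs{\vp}} \leq (2\ex^{-b})^N \cdot 2^{-\abs{\vp}}$, whence
\begin{align*}
\sum_{\vp \in \Zspace^m,\, \abs{\vp} \geq N+1} \ex^{-b\abs{\vp}} \leq (2\ex^{-b})^N \left( \sum_{p \in \Zspace} 2^{-\abs{p}} \right)^m \leq (2\ex^{-b})^N \cdot 4^m\;,
\end{align*}
using $\sum_{p \in \Zspace} 2^{-\abs{p}} = 1 + 2\sum_{k \geq 1} 2^{-k} \leq 2\sum_{k \geq 0} 2^{-k} = 4$. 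Summing over $m$ then yields $\sum_{m \geq 1} (4a\abs{t})^m/m! = \ex^{\abs{4at}} - 1$, giving Equation~\eqref{eq:conv:bound}. Equation~\eqref{eq:conv:Nmin} follows by setting $(2\ex^{-b})^N(\ex^{\abs{4at}}-1) \leq E_{\mathrm{des}}$, taking logarithms, and dividing by the negative quantity $\ln(2\ex^{-b}) = \ln 2 - b$, which flips the inequality. The only subtle point in the plan is choosing the exponential split so that the $N$-dependence and the $m$-dependence cleanly separate; once that is done, everything reduces to routine manipulation of absolutely convergent series.
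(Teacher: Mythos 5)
Your proposal is correct and arrives at exactly the bound of Equation~\eqref{eq:conv:bound}, but the way you dispose of the sum over excluded index tuples is genuinely different from, and considerably more economical than, the paper's argument. Both proofs share the opening moves: term-by-term bounding of the error series~\eqref{eq:conv:E_def} via $\abs{\xi_{\vp}(t)}\le \abs{t}^m/m!$ and $\norm{\cJ_{\vp}}\le a^m\ex^{-b\abs{\vp}}$, and the observation that $\vp\notin\cP_0^{(m)}$ forces $\abs{\vp}\ge N+1$ (the inner rhombus). From there the paper converts the tuple sum into a sum over the norm $M=\abs{\vp}$, invokes the stars-and-bars count $2^m\binom{M+m-1}{m-1}$ of Lemma~\ref{lem:proof:starsbars}, identifies the resulting series as the Taylor remainder of $(1-x)^{-m}$ via Lemma~\ref{lem:series_eval}, and then needs further binomial-coefficient estimates, with $b>\ln 2$ entering through $\frac{x}{1-x}<1$. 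You instead split $\ex^{-b\abs{\vp}}=(2\ex^{-b})^{\abs{\vp}}\,2^{-\abs{\vp}}$, peel off the factor $(2\ex^{-b})^{N}$ pointwise (this is precisely where $b>\ln 2$ is needed, so that $(2\ex^{-b})^{\abs{\vp}}$ is nonincreasing in $\abs{\vp}$), drop the constraint $\abs{\vp}\ge N+1$, and let the remaining weight factorize coordinate-wise into $\bigl(\sum_{p\in\Zspace}2^{-\abs{p}}\bigr)^{m}$. This eliminates both combinatorial lemmas and the entire Taylor-remainder computation. Two small remarks: the prose claim that the weights ``decouple over the level set'' is imprecise (they decouple only after the constraint is dropped, which your displayed inequality does correctly), and the one-dimensional sum equals $3$, not $4$, so retaining $3^{m}$ would give the slightly sharper constant $\ex^{\abs{3at}}-1$; your relaxation to $4$ is needed only to reproduce the paper's stated constant. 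The derivation of $N^*$ by taking logarithms and dividing by the negative quantity $\ln 2-b$ is likewise correct.
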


\begin{proof}
With Assumption~\ref{assu:b}, a product $\cJ_{\vp}$ of Fourier coefficients is bounded by 
\begin{align}\label{eq:assu}
	\norm{\cJ_{\vp}} \leq \norm{\vJ_{p_1}} \dots \norm{\vJ_{p_m}} \leq a^m \ex^{-b \abs{\vp}} \;.
\end{align}
This bound is coarse when the Fourier coefficient matrices are not diagonally dominant, but it reduces the involved matrix norms on $\cJ_{\vp}$ to a $1$-norm in the integer index tuple $\vp$. Using this bound and Equation~\eqref{eq:lem:xi_p:bound} on each summand of the approximation error given by Equation~\eqref{eq:conv:E_def} individually yields
\begin{align}\label{eq:proof:bound:summands}
	\norm{\vE(t)}  
	\leq \sum_{m = 1}^\infty \sum_{\vp \in \Zspace^{m} \setminus \cP_0^{(m)}} \norm{\xi_{\vp}(t) \cJ_{\vp} }
	\leq \sum_{m = 1}^\infty \sum_{\vp \in \Zspace^{m} \setminus \cP_0^{(m)}}  \frac{\abs{a t}^m}{m!} \ex^{- b \abs{\vp}} \;.
\end{align}
Equation~\eqref{eq:proof:bound:summands} already contains two majorly conservative simplifications. Firstly, bounding every summand individually cancels any interaction between terms which could potentially annihilate each other. Secondly, every scalar factor $\xi_{\vp}(t)$ is bounded by its polynomial upper bound of Equation~\eqref{eq:lem:xi_p:bound}. From Figure~\ref{fig:proof:xi_p} it is obvious that this bound is not particularly sharp as $t$ increases. In fact, for the majority of values of $\vp$, $\xi_{\vp}$ is even periodic and bounded by a constant. This is examined in more detail in~\ref{sec:app:periodicity}. However, using any more sophisticated bounds for $\xi_{\vp}$ or $\norm{\cJ_{\vp}}$ complicates Equation~\eqref{eq:proof:bound:summands} so much that it is unclear how to proceed to a simple closed-form expression. 

The benefit of Equation~\eqref{eq:proof:bound:summands} is that only the norm $\abs{\vp}$ of each integer index tuple appears in the summand.  The sum over all integer index tuples can thus be replaced by a sum over their norm. This reformulation yields the triple sum
\begin{align}\label{eq:conv:counting}
	\norm{\vE(t)}  \leq \sum_{m = 1}^\infty \frac{\abs{a t}^m}{m!} \sum_{M = 0}^\infty \ex^{- b M}  \sum_{\substack{\vp \in \Zspace^{m} \setminus \cP^{(m)}_0 \\ \abs{\vp} = M}} 1    \;,
\end{align}
where the rightmost sum counts the number of integer $m$-tuples of a certain norm $M$ that lie outside the parallelotope $\cP^{(m)}_0$. This sum has only finitely many summands: The total number of integer index tuples of norm $M$ is bounded by Lemma~\ref{lem:proof:starsbars}\ref{lem:proof:starsbars:item:p} of~\ref{sec:app:combi}, and we only sum over a subset of them.

\begin{figure}[hbtp]
	\centering
	\begin{subfigure}[t]{0.475\textwidth}
		\centering
		\includegraphics{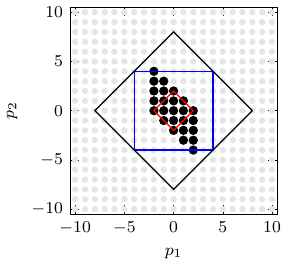}
		\caption{$N = 2$: rhombus $\abs{\vp} \!=\! 2\subset \cP_0^{(2)}$.}
		\label{fig:conv:norms:2}
	\end{subfigure}
	\hfill
	\begin{subfigure}[t]{0.475\textwidth}
		\centering
		\includegraphics{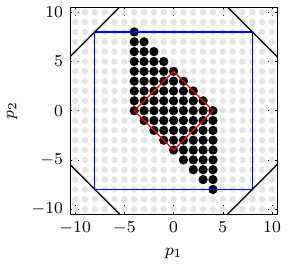}
		\caption{$N = 4$: rhombus $\abs{\vp} = 4\subset \cP_0^{(2)}$.}
		\label{fig:conv:norms:4}
	\end{subfigure}
	\caption{Visualization of integer index set $\cP_0^{(2)}$ (large dots). Inner rhombus (red), outer square (blue) and outer rhombus (black) are indicated.}
	\label{fig:conv:norms}
\end{figure} 

In Figure~\ref{fig:conv:norms}, the integer index tuples are visualized for $m = 2$ and and three different values of $N$. Visually, the space $\Zspace^2$ in the figure can be separated into three regions, depending on the norm $\abs{\vp}$. Tuples within the $N$-rhombus shown in red in Figure~\ref{fig:conv:norms}, i.\,e., $\vp$ with small norm $\abs{\vp} \leq N$, are always elements of $\cP_0^{(2)}$. In contrast, tuples $\vp$ are never elements of $\cP_0^{(2)}$ when $\max\{\abs{p_1}, \abs{p_2}\}>2N$, i.e., when $\vp$ lies outside the $2N$-square shown in blue in Figure~\ref{fig:conv:norms}. In the intermediate region outside the $N$-rhombus but within the $2N$-square, some tuples are elements of $\cP_0^{(2)}$ and the others are not. It can be easily verified from the definition of $\cP_0^{(m)}$ in Equation~\eqref{eq:proof:Pj} that these three observations generalize to arbitrary~$m$: All tuples $\vp \in \Zspace^m$ inside the $N$-rhombus with norm $\abs{\vp} \leq N$ are elements of $\cP_0^{(m)}$, while no tuple outside the $2N$-hypercube with an entry $\abs{p_k} > 2N$ can be an element of $\cP_0^{(m)}$. In the intermediate region $N < \abs{\vp}$ and $\abs{p_k} \leq 2N$ for all $k$, some tuples are elements of $\cP_0^{(m)}$ and the others are not.

This observation simplifies the rightmost summation in Equation~\eqref{eq:conv:counting} depending on $M$. For $M \leq N$, in the inner region, all possible tuples $\vp$ are elements of the eligible set $\cP_0^{(m)}$ and the set over which to sum in Equation~\eqref{eq:conv:counting} is empty. Hence, the summation over $M$ in Equation~\eqref{eq:conv:counting} can equivalently be initialized at $M = N+1$ instead of $M = 0$. For $\abs{\vp} > 2mN$, shown in black in Figure~\ref{fig:conv:norms}, at least one entry must have $\abs{p_k} > 2N$ and no such tuples can be elements of $\cP_0^{(m)}$. Hence, for $M > 2mN$, the rightmost sum of Equation~\eqref{eq:conv:counting} must count all integer index tuples with norm $M$. Using Lemma~\ref{lem:proof:starsbars}\ref{lem:proof:starsbars:item:p} from~\ref{sec:app:combi}, this number is bounded by $2^m \binom{M + m - 1}{m - 1}$. In the intermediate region $N < M \leq 2mN$, counting is not so easy. As another very conservative simplification step, we upper-bound the rightmost sum in Equation~\eqref{eq:conv:counting} by the binomial coefficient $2^m \binom{M + m - 1}{m - 1}$ also in this region. Essentially, we count integer index tuples in the intermediate region $N < M \leq 2mN$, irrespective of whether they are elements of $\cP_0^{(m)}$ or not. These arguments simplify Equation~\eqref{eq:conv:counting} to
\begin{align}\label{eq:conv:remainder}
	\norm{\vE(t)} \leq \sum_{m = 1}^\infty \frac{\abs{a t}^m}{m!} \sum_{M = N+1}^\infty \ex^{- b M} \sum_{\substack{\vp \in \Zspace^{m} \\ \abs{\vp} = M}}  1 \leq \sum_{m = 1}^\infty  \frac{\abs{2at}^m}{m! } \sum_{M = N + 1}^\infty \binom{M+m-1}{m-1} \ex^{-bM} \;.
\end{align}
The coarseness introduced by this step increases as $N$ and $m$ increase and the intermediate region becomes larger. Being able to resolve the number of considered tuples more accurately in the intermediate region would likely improve the convergence rate of the error bound, possibly removing the factor $2^m$ which is, in part, responsible for the condition $b > \ln 2$ in Theorem~\ref{thm:proof:error}. However, this counting problem is non-trivial and beyond the scope of this paper.

The rightmost series in Equation~\eqref{eq:conv:remainder} is the remainder $R_N(x)$ of the Taylor series of $(1-x)^{-(k+1)}$, treated in Lemma~\ref{lem:series_eval} of~\ref{sec:app:Taylor}, with $x = \ex^{-b} < 1$ and $k = m-1$. Using the expression~\eqref{eq:lem:series_eval:remainder} for the remainder and keeping~$x$ for the sake of brevity, we obtain
\begin{align}\label{eq:conv:binom}
	\norm{\vE(t)} \leq x^{N} \sum_{m = 1}^\infty  \frac{\abs{2at}^m}{m!}  \sum_{w = 1}^{m} \binom{N + m}{N + w} \left( \frac{x}{1-x}\right)^{w}  \;.
\end{align}
Due to the infinite series over $m$, Equation~\eqref{eq:conv:binom} can still not be evaluated with ease and further bounding steps are necessary. The assumption $b > \ln 2$ guarantees that $x < 0.5$, and thus $\frac{x}{1-x} < 1$ can be neglected. This simplification does not introduce very much additional slack as $b$ will often be chosen close to $\ln 2$ in practice, see Section~\ref{sec:examples}. However, this still does not solve the issue of the infinite series. Summing the binomial coefficients with lower argument $0$ to $N+m$ instead of from $N+1$ to $N+m$ yields the bound 
\begin{align}\label{eq:proof:conv:bound}
	\norm{\vE(t)} &\leq x^{N} \sum_{m = 1}^\infty  \frac{\abs{2at}^m}{m!} \sum_{w = 0}^{m + N} \binom{m + N}{w} \nonumber \\
	&= x^{N} \sum_{m = 1}^\infty  \frac{\abs{2at}^m}{m!} 2^{m + N}\nonumber \\
	&= (2x)^{N} \sum_{m = 1}^\infty  \frac{\abs{4at}^m}{m!} \nonumber\\
	&= (2 \ex^{-b})^{N} \left( \ex^{\abs{4 a t}} - 1  \right)\;.
\end{align}
This is the desired error bound of Equation~\eqref{eq:conv:bound}. The expression~\eqref{eq:conv:Nmin} for the minimum truncation order~$N^*$ which guarantees an error smaller than $E_{\mathrm{des}}$ follows immediately from Equa\-tion~\eqref{eq:proof:conv:bound} by isolating $N^*$ in $E_{\mathrm{des}} = (2 \ex^{-b})^{N^*} \left( \ex^{\abs{4 a t}} -1 \right)  $.
\end{proof}

The bounding steps employed in the above proof to go from the series expression of Equation~\eqref{eq:conv:E_def} to the closed-form bound of Equation~\eqref{eq:proof:conv:bound} are conservative in multiple individual instances. On the one hand, every summand of Equation~\eqref{eq:conv:E_def} is bounded individually and coarsely using the exponential decay condition. This leads to the~$\ex^{a t}$ term in Equation~\eqref{eq:proof:conv:bound}. Numerical experiments in Section~\ref{sec:examples} indicate that this term is not very sharp and might be an artifact of the specific proof strategy employed here, as the actual error does not seem to grow with $t$ after a short initial phase. A more sophisticated bound could potentially leverage the periodicity of $\xi_{\vp}$ for most values of $\vp$, see also~\ref{sec:app:periodicity}. 

On the other hand, counting the relevant integer tuples in the rightmost series in Equation~\eqref{eq:conv:binom} accurately is a challenging combinatorial problem. The very conservative upper bound chosen here leads to the binomial coefficient expression of Equation~\eqref{eq:conv:remainder}. This expression is beneficial because it can be identified with a certain Taylor series, but the subsequent handling of binomial coefficients leads to the technical requirement $b > \ln 2$. It is unclear how the remaining summations could be eliminated if a sharper bound for the number of considered integer indices were employed. 

The condition $b > \ln 2$ for the applicability of the closed-form bound of Equation~\eqref{eq:conv:bound} requires the Fourier coefficients of~$\vJ(t)$ to decay exponentially at a rate of at least $\ln 2$. This is a stricter requirement than the arbitrary exponential decay needed for absolute convergence of $\vPh(t)$, according to Theorem~\ref{thm:proof:Phi_series:xi}. In practice, whether the condition $b > \ln 2$ is satisfied can be checked numerically, as demonstrated in Section~\ref{sec:examples} with three numerical examples. Despite being not particularly sharp, the bound is the first of its kind and thus integral in providing theoretical backing for the Koopman-Hill projection method, which the Koopman framework cannot provide.

\section{Improved convergence rate using subharmonic projection}\label{sec:subharmonics}
According to Theorem~\ref{thm:proof:Q}, the matrix $\vQ(t)$ contains $2N+1$ blocks, each offering an individual approximation of the fundamental solution matrix~$\vPh(t)$. Theorem~\ref{thm:proof:error} gives an explicit error bound for the centermost block~$\vQ_0$. The other blocks differ from $\vQ_0$ only by a shift in the center of their respective index parallelotopes~$\cP_j^{(m)}$ (see Figure~\ref{fig:proof:Pj}). In principle, the proof of Theorem~\ref{thm:proof:error} can be adapted to these other blocks, but counting the eligible indices in Equation~\eqref{eq:conv:counting} becomes even more complex when the parallelotope is not centered at the origin. Specifically, the maximum norm $M$ for which all integer indices are inside $\cP_j^{(m)}$ decreases as $\abs{j}$ increases, so the summation in Equation~\eqref{eq:conv:remainder} must start at $N + 1-\abs{j}$ instead of $N +1$. As a result, the error bound for each block decays with $N-\abs{j}$ rather than $N$, making the centermost block $\vQ_0$ the one with the smallest error bound. 

However, previous works~\cite{Bayer2023,Bayer2024} have observed that better accuracy can be achieved in some cases by approximating the fundamental solution matrix by a linear combination over all blocks of $\vQ(t)$. 
In the present section, we formalize this approach, which we call the \emph{subharmonic} approach, making it applicable to arbitrary dynamics fulfilling Assumption~\ref{assu:b}. We show that this approach is not only beneficial numerically, but also comes with a lower error bound compared to the direct approach.

\subsection{Subharmonic formulation and series expression}\label{sec:subharmonics:formulation}
Consider a fixed truncation order $N \in \Nspace$ and an arbitrary $T$-periodic system matrix $\vJ(t)$ fulfilling As\-sump\-tion~\ref{assu:b} with $b > \ln 2$. We make no further assumption on the original Fourier series given in Equation~\eqref{eq:background:fourier} with $\omega = \frac{2\pi}{T}$. 
In particular, the Fourier coefficient matrices $\vJ_k$ may be nonzero for even and odd values of $k \in \Zspace$. 
As the system matrix $\vJ(t)$ is $T$-periodic, it can also be considered  $\tilde{T} = 2T$-periodic. The corresponding Fourier series with the halved base frequency $\tilde{\omega} = \frac{2\pi}{2T} = \frac{\omega}{2}$ is 
\begin{align}\label{eq:subh:fourierseries}
	\vJ(t) = \sum_{\tilde{k} = -\infty}^\infty \tilde{\vJ}_{\tilde{k}} \ex^{\ic \tilde{k} \frac{\omega}{2} t} \;.
\end{align}
Here and below, we denote quantities associated with the doubled period $\tilde{T}$ and the Fourier series in Equa\-tion~\eqref{eq:subh:fourierseries} with a tilde. Quantities without the tilde refer to the original Fourier series in Equation~\eqref{eq:background:fourier}. 
Comparison of the Fourier series expressions in Equations~\eqref{eq:background:fourier} and~\eqref{eq:subh:fourierseries} reveals that every odd subharmonic Fourier coefficient vanishes:
\begin{align}\label{eq:subh:J_subh}
	\tilde{\vJ}_{\tilde{k}} = \begin{cases}
		\vJ_{\frac{\tilde{k}}{2}} & \text{if~}\tilde{k} \text{~even}\\
		\vzero & \text{if~} \tilde{k} \text{~odd} \;.
	\end{cases}
\end{align}
The Fourier coefficients of Equation~\eqref{eq:subh:J_subh} in the Fourier series of Equation~\eqref{eq:subh:fourierseries} fulfill Assumption~\ref{assu:b} with $\tilde{a} = a$ and $\tilde{b} = \frac{b}{2}$. The subharmonic Hill matrix $\tilde{\vH}$ of truncation order $\tilde{N}$ can be constructed from the subharmonic Fourier series of Equation~\eqref{eq:subh:fourierseries} in the usual way.  According to Equation~\eqref{eq:background:Hillmat}, the subharmonic Fourier coefficients $\tilde{\vJ}_{\tilde{k}}$ with ${\tilde{k} \in \left\{-2\tilde{N}, \dots, 2\tilde{N} \right\}}$ are placed on the cor\-res\-pon\-ding block diagonals and purely imaginary terms of the form $\ic \tilde{k} \tilde{\omega}$ are added in the main diagonal. 

The (classical) Hill matrix $\vH$ of truncation order $N$ associated with the original Fourier series in Equa\-tion~\eqref{eq:background:fourier} and the subharmonic Hill matrix $\tilde{\vH}$ of truncation order $\tilde{N} = 2N$ associated with the subharmonic Fourier series in Equation~\eqref{eq:subh:fourierseries} both contain the same set of Fourier coefficient matrices $\left\{ \vJ_{-2N}, \dots, \vJ_{2N}\right\}$ and are thus, in a sense, related. The difference between $\vH$ and $\tilde{\vH}$, however, is that the subharmonic Hill matrix is of twice the size of the original Hill matrix and the blocks of nontrivial Fourier coefficient matrices are separated by blocks of zeros ($\tilde{\vJ}_{\tilde{k}}$ for odd $\tilde{k}$). Explicitly for $N = 1$, this yields
\begin{align}
	\vH &= 
	\left(\begin{array}{lll}
		  \graybox[55]{\vJ_0}{\vJ_0} + \ic \omega \vI 
		& \graybox[30]{\vJ_{-1}}{\vJ_0} 
		& \graybox[10]{\vJ_{-2}}{\vJ_0} 
		\\
		\graybox[30]{\vJ_1}{\vJ_0} 
		& \graybox[55]{\vJ_0}{\vJ_0} \phantom{+ \dashedbox{\ic \omega \vI}}
		& \graybox[30]{\vJ_{-1}}{\vJ_0} 
		\\
		\graybox[10]{\vJ_2}{\vJ_0} 
		& \graybox[30]{\vJ_1}{\vJ_0} 
		& \graybox[55]{\vJ_0}{\vJ_0} - \dashedbox{\ic \omega \vI}
	\end{array} \right)  \label{eq:subh:H:N1}
	\\
	\tilde{\vH} &= 
	\left(\begin{array}{lllll}
		  \graybox[55]{\vJ_0}{\vJ_0} + \ic \omega \vI 
		& \graybox[0]{\vzero}{\vJ_0} 
		& \graybox[30]{\vJ_{-1}}{\vJ_0} 
		& \graybox[0]{\vzero}{\vJ_0}
		& \graybox[10]{\vJ_{-2}}{\vJ_0} 
		\\
		  \graybox[0]{\vzero}{\vJ_0} 
		& \graybox[55]{\vJ_0}{\vJ_0} + \dashedbox{\frac{1}{2}\ic \omega \vI} 
		& \graybox[0]{\vzero}{\vJ_0} 
		& \graybox[30]{\vJ_{-1}}{\vJ_0} 
		& \graybox[0]{\vzero}{\vJ_0} 
		\\
		  \graybox[30]{\vJ_1}{\vJ_0}
		& \graybox[0]{\vzero}{\vJ_0}
		& \graybox[55]{\vJ_0}{\vJ_0} \phantom{+ \dashedbox{\ic \omega \vI}}
		& \graybox[0]{\vzero}{\vJ_0} 
		& \graybox[30]{\vJ_{-1}}{\vJ_0} 
		\\
		  \graybox[0]{\vzero}{\vJ_0} 
		& \graybox[30]{\vJ_1}{\vJ_0} 
		& \graybox[0]{\vzero}{\vJ_0} 
		& \graybox[55]{\vJ_0}{\vJ_0} - \dashedbox{\frac{1}{2}\ic \omega \vI} 
		& \graybox[0]{\vzero}{\vJ_0} 
		\\
		  \graybox[10]{\vJ_2}{\vJ_0} 
		& \graybox[0]{\vzero}{\vJ_0} 
		& \graybox[30]{\vJ_1}{\vJ_0} 
		& \graybox[0]{\vzero}{\vJ_0} 
		& \graybox[55]{\vJ_0}{\vJ_0} - \dashedbox{2 \ic \omega \vI}
	\end{array} \right) \;. \label{eq:subh:Hsub:N1}
\end{align}

Note that both Equations~\eqref{eq:subh:H:N1} and~\eqref{eq:subh:Hsub:N1} are expressed in terms of the \emph{original} Fourier coefficients pertaining to the Fourier series of Equation~\eqref{eq:background:fourier}. In the even row blocks of $\tilde{\vH}$ (counted starting from the centermost row block with index $0$), only the even column blocks are nonzero. If all odd row and column blocks are discarded, exactly the original Hill matrix~$\vH$ with $2N+1 \times 2N+1$ blocks remains. Similarly, only the odd column blocks are nonzero in the odd row blocks of $\tilde{\vH}$. If all even row and column blocks are discarded, what remains is a matrix $\hat{\vH}$ with $2N$ blocks. Explicitly for $N = 1$, the matrix $\hat{\vH}$ is
\begin{align}
	\hat{\vH} &= 
	\left(\begin{array}{llll}
		  \graybox[55]{\vJ_0}{\vJ_0} + \dashedbox{\frac{1}{2}\ic \omega \vI} 
		& \graybox[30]{\vJ_{-1}}{\vJ_0} 
		\\
		  \graybox[30]{\vJ_1}{\vJ_0}
		& \graybox[55]{\vJ_0}{\vJ_0} - \dashedbox{\frac{1}{2}\ic \omega \vI} 
	\end{array} \right) \;.
\end{align} For all values of $N$, the matrix $\hat{\vH}$ can be constructed from $\vH$ by discarding the last block row and block column and subtracting $0.5 \ic \omega$ from the diagonal. 

By also defining the diagonal matrix $\tilde{\vD} = \diag\left\{-N, -N + \frac{1}{2}, N-1, \dots, N - \frac{1}{2}, N\right\} \otimes \vI$ of the corresponding subharmonic size, we establish the subharmonic equivalent to the matrix $\vQ(t)$ of Equation~\eqref{eq:proof:KoopHill:Q_defin}, 
\begin{align}\label{eq:subh:Q_def}
	\tilde{\vQ}(t) = \ex^{\ic \omega t \tilde{\vD}} \ex^{\tilde{\vH} t} \tilde{\vW} \;,
\end{align}
which has $2\tilde{N}+1 = 4N+1$ blocks of size $n \times n$ that approximate the fundamental solution matrix due to Theorem~\ref{thm:proof:Q}. Naively, with Theorem~\ref{thm:proof:error}, the approximation error of the centermost block of $\tilde{\vQ}$ is
\begin{align}\label{eq:subh:Qtilde}
	\abs{\tilde{\vQ}_0 - \vPh(t)} \leq (2 \ex ^{- \tilde{b}})^{\tilde{N}} \left( \ex^{\abs{4 a t}} - 1 \right) = (2 \ex ^{- b})^{N} \left( \ex^{\abs{4 a t}} - 1\right) \;,
\end{align}
so no improvements in accuracy are expected compared to the original, non-subharmonic formulation if only the centermost block is considered.  The following lemma provides a series expression for all blocks of $\tilde{\vQ}(t)$ specifically when $\tilde{\vH}$ is of subharmonic structure, showing in particular that $\vQ_0(t)$ and $\tilde{\vQ}_0(t)$ are indeed identical.

\begin{lemma}[Series expression for $\tilde{\vQ}(t)$ in subharmonic formulation]\label{thm:subh:Q}
	Let $\vJ(t)$ be a $T$-periodic system matrix, whose original (non-subharmonic) Fourier series as defined in Equation~\eqref{eq:background:fourier} has the Fourier coefficients $\vJ_{-2N}, \dots, \vJ_{2N}$ which may all be nonzero. Consider the matrix $\tilde{\vQ}(t)$ constructed via Equation~\eqref{eq:subh:Q_def} using the corresponding subharmonic Hill matrix of the form of Equation~\eqref{eq:subh:Hsub:N1}. For any $\tilde{j} \in \left\{ -2N, \dots, 2N\right\}$, the $\tilde{j}$-th $n\times n$-block of~$\tilde{\vQ}(t)$ is given by the series
	\begin{align}\label{eq:subh:Q:series}
		\tilde{\vQ}_{\tilde{j}}(t) = \vI + \sum_{m = 1}^\infty \sum_{\vp \in \cP_{\frac{\tilde{j}}{2}}^{(m)}} \xi_{\vp}(t) \cJ_{\vp} \;,
	\end{align}
	where
	\begin{align}\label{eq:subh:Q:p}
		\cP_{\frac{\tilde{j}}{2}}^{(m)} &= \left\{ \vp \in \Zspace^m : \abs{\frac{\tilde{j}}{2} - \sum_{i = 1}^w p_i} \leq N \text{~for all~}  w = 1, \dots m\right\} 
	\end{align} 
	are the sets of eligible integer index tuples, $\cJ_{\vp}$ refers to products of the original (non-subharmonic) Fourier coefficient matrices occurring in Equation~\eqref{eq:background:fourier}, and $\xi_{\vp}(t)$ is defined in Equation~\eqref{eq:lem:proof:xi_p:xi} of Theorem~\ref{thm:proof:Phi_series:xi}. The series in Equation~\eqref{eq:subh:Q:series} is absolutely convergent for all $t \in \Rspace$ if all Fourier coefficients $\left\{\vJ_k\right\}_{k = -2N, \dots, 2N}$ of the original (non-subharmonic) Fourier series are bounded.
\end{lemma}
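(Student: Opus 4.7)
The plan is to apply Theorem~\ref{thm:proof:Q} directly to the subharmonic formulation and then collapse the resulting series using the vanishing of the odd subharmonic Fourier coefficients.

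First, I observe that $\tilde{\vQ}(t)$ defined in Equation~\eqref{eq:subh:Q_def} is of exactly the form analyzed in Theorem~\ref{thm:proof:Q}, but with the base frequency $\omega$ replaced by $\tilde{\omega} = \omega/2$, the truncation order $N$ replaced by $\tilde{N} = 2N$, and the Fourier coefficients replaced by the subharmonic ones $\tilde{\vJ}_{\tilde{k}}$ given in Equation~\eqref{eq:subh:J_subh}. Applying Theorem~\ref{thm:proof:Q} in this setting yields
\begin{align*}
\tilde{\vQ}_{\tilde{j}}(t) = \vI + \sum_{m=1}^\infty \sum_{\tilde{\vp} \in \tilde{\cP}^{(m)}_{\tilde{j}}} \tilde{\xi}_{\tilde{\vp}}(t)\, \tilde{\cJ}_{\tilde{\vp}}\;,
\end{align*}
where $\tilde{\cP}^{(m)}_{\tilde{j}}$ is the parallelotope of Equation~\eqref{eq:proof:Pj} with $N$ replaced by $2N$, the product $\tilde{\cJ}_{\tilde{\vp}}$ uses the subharmonic Fourier coefficients, and $\tilde{\xi}_{\tilde{\vp}}(t)$ is obtained from Equation~\eqref{eq:lem:proof:xi_p:xi} by replacing $\omega$ with $\tilde{\omega}$. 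Absolute convergence for all $t \in \Rspace$ will follow from the same theorem, since the only nonzero $\tilde{\vJ}_{\tilde{k}}$ coincide with the original coefficients $\vJ_{-2N}, \dots, \vJ_{2N}$, which are bounded by assumption.

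Next, since $\tilde{\vJ}_{\tilde{k}} = \vzero$ whenever $\tilde{k}$ is odd, I restrict the summation to tuples $\tilde{\vp}$ whose entries are all even, writing $\tilde{\vp} = 2\vp$ with $\vp \in \Zspace^m$. By Equation~\eqref{eq:subh:J_subh} this gives $\tilde{\cJ}_{2\vp} = \cJ_{\vp}$. To reduce the scalar prefactor, I would use the homogeneity $[2p_1, 2(p_1 + p_2), \dots, 2(p_1 + \cdots + p_m)]^{\val} = 2^{\abs{\val}}\,[p_1, p_1+p_2, \dots, p_1+\cdots+p_m]^{\val}$ together with $(\ic\tilde{\omega})^{\abs{\val}} = 2^{-\abs{\val}}(\ic\omega)^{\abs{\val}}$. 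The two factors of $2^{\abs{\val}}$ cancel inside the summand of Equation~\eqref{eq:lem:proof:xi_p:xi}, so $\tilde{\xi}_{2\vp}(t) = \xi_{\vp}(t)$ and the scalar factors of the original (non-subharmonic) formulation reappear unchanged.

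Finally, I would translate the admissibility condition. An even tuple $\tilde{\vp} = 2\vp$ lies in $\tilde{\cP}^{(m)}_{\tilde{j}}$ precisely when $\abs{\tilde{j} - 2\sum_{l=1}^w p_l} \leq 2N$ for every $w = 1, \dots, m$, which after dividing by $2$ is the defining condition $\abs{\tilde{j}/2 - \sum_{l=1}^w p_l} \leq N$ of $\cP^{(m)}_{\tilde{j}/2}$ from Equation~\eqref{eq:subh:Q:p}. The equivalence holds for both even and odd $\tilde{j}$: in the odd case $\tilde{j}/2$ is a half-integer and $\tilde{j}/2 - \sum p_l$ takes only half-integer values, so the bound $\leq N$ is attained whenever $\abs{\tilde{j} - 2\sum p_l} \leq 2N-1$, which coincides with $\abs{\tilde{j} - 2\sum p_l} \leq 2N$ among odd integers. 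Combining these three reductions produces the claimed series expression~\eqref{eq:subh:Q:series}. The main technical obstacle I anticipate is the scalar factor identity $\tilde{\xi}_{2\vp}(t) = \xi_{\vp}(t)$: one must carefully track the interplay between the halved base frequency $\tilde{\omega}$ and the doubled indices $2\vp$, since a single mis-scaling of powers of $2$ would destroy the clean correspondence that makes the lemma work and that will underpin the improved convergence analysis in the remainder of this section.
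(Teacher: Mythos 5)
Your proposal is correct and follows essentially the same route as the paper: apply Theorem~\ref{thm:proof:Q} to the subharmonic Fourier series, discard all tuples containing an odd index since the corresponding $\tilde{\vJ}_{\tilde{k}}$ vanish, substitute $\tilde{\vp} = 2\vp$ to recover $\tilde{\cJ}_{2\vp} = \cJ_{\vp}$, and divide the admissibility condition by two to obtain $\cP^{(m)}_{\tilde{j}/2}$. Your explicit verification that $\tilde{\xi}_{2\vp}(t) = \xi_{\vp}(t)$, via the cancellation of the homogeneity factor $2^{\abs{\val}}$ against $(\ic\omega/2)^{\abs{\val}}$ in Equation~\eqref{eq:lem:proof:xi_p:xi}, is a welcome extra step that the paper's proof leaves implicit.
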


\noindent The proof of Theorem~\ref{thm:subh:Q} is deferred to Section~\ref{sec:proof:subh:Q}, as with the other series proofs.

The index set defined in Equation~\eqref{eq:subh:Q:p} generalizes the eligible index set $\cP_j^{(m)}$ from Theorem~\ref{thm:proof:Q} (cf.\ Equation~\eqref{eq:proof:Pj}) to non-integer values of $j$. Any block with even $\tilde{j}$, i.e., $\tilde{\vQ}_{\tilde{j}}$ with $\tilde{j} = 2j$ and $j \in \Zspace$, is identical to the corresponding block $\vQ_j$ of the non-subharmonic $\vQ(t)$ from Theorem~\ref{thm:proof:Q}. In particular, the central blocks~$\vQ_0$ and $\tilde{\vQ}_0$ coincide.

For odd $\tilde{j}$, the corresponding set $\cP_{\frac{\tilde{j}}{2}}^{(m)}$ is the intersection of the index sets $\cP_{\frac{\tilde{j}+1}{2}}^{(m)}$ and $\cP_{\frac{\tilde{j} - 1}{2}}^{(m)}$, evaluated at the integer values adjacent to $\frac{\tilde{j}}{2}$. This follows from the fact that equality in the ``$\leq$'' condition of Equation~\eqref{eq:subh:Q:p} can never be attained when $\frac{\tilde{j}}{2}$ is not integer, as visualized in Figure~\ref{fig:subh:setdiff} for the case $m = 2$. The set difference between $\cP_{\frac{\tilde{j}}{2}}^{(m)}$ and $\cP_{\frac{\tilde{j}+1}{2}}^{(m)}$ then forms an ``outer edge'' of the larger set $\cP_{\frac{\tilde{j}+1}{2}}^{(m)}$. 
The difference $\tilde{\vQ}_{\tilde{j}+1} - \tilde{\vQ}_{\tilde{j}}$ between two neighboring blocks of $\tilde{\vQ}$ sums exactly over this set difference for all $m$. In other words, it isolates all integer index tuples in the series that lie on a specific edge of a parallelotope.

\begin{figure}
	\centering
	\begin{subfigure}[t]{0.475\textwidth}
		\centering
		\includegraphics{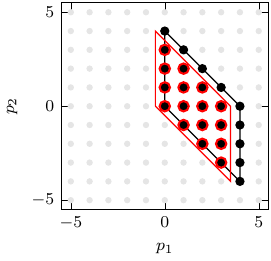}
		\caption{Sets $\cP^{(2)}_2$ (black) and $\cP^{(2)}_{1.5}$ (red) for $N = 2$ and their boundaries. Set difference is an edge of $\cP^{(2)}_2$.}
		\label{fig:subh:setdiff}
	\end{subfigure}
	\hfill
	\begin{subfigure}[t]{0.475\textwidth}
		\centering
		\includegraphics{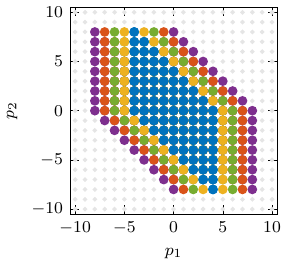}
		\caption{Set $\cP_{\mathrm{subh}}^{(2)}$ (large dots) for $N=4$, constructed by $\cP_0^{(2)}$ (blue) and edges of $\cP_{\pm j }^{(2)}$, $j = 1, \dots, 4$ (yellow, green, red, purple).}
		\label{fig:subh:construct}
	\end{subfigure}
	\caption{Construction of the subharmonic index set $\cP_{\mathrm{subh}}^{(2)}$ as the union of sets $\cP_j^{(2)}$, $j = -N, \dots, N$.}
	\label{fig:subh}
\end{figure}

As a result, an approximation of the fundamental solution matrix with significantly enlarged index sets can be constructed by taking an alternating superposition of the blocks in $\tilde{\vQ}(t)$: 
\begin{subequations}\label{eq:subh:KoopHill}
\begin{align}
	\vPh_{\mathrm{subh}}(t):= \vCsub \tilde{\vQ}(t) &= \vCsub \ex^{\ic \omega \tilde{\vD} t}\ex^{\tilde{\vH} t} \tilde{\vW} \\
	\vCsub & = \left( \vI, -\vI, \vI, \dots, -\vI, \vI\right) & &\in \Rspace^{n \times n(4N+1)}\\
	\tilde{\vW} & = \left( \vI, \dots \vI \right) \T & &\in \Rspace^{n(4N+1) \times n} \;.
\end{align}
\end{subequations}
Equation~\eqref{eq:subh:KoopHill} is termed the subharmonic Koopman-Hill projection formula, which has been shown numerically to provide improved accuracy~\cite{Bayer2024}. The series expression for this formulation is formalized in the following theorem.

\begin{theorem}[Series expression for $\vPhsub$]\label{thm:proof:Phsub}
	The series expression for the subharmonic Koopman-Hill projection formulation, with $\tilde{\vQ}_{\tilde{j}}$ denoting the blocks of $\tilde{\vQ}$ in Equation~\eqref{eq:subh:Q_def}, is given by
	\begin{align}\label{eq:subh:series}
		\vPh_{\mathrm{subh}}(t) :=  \sum_{\tilde{j} = -2N}^{2N} (-1)^{\tilde{j}} \tilde{\vQ}_{\tilde{j}}(t) = \vI + \sum_{m = 1}^\infty \sum_{\vp \in \cPsub^{(m)}}\xi_{\vp} \cJ_{\vp}
	\end{align}
	with the integer index sets
	\begin{align}\label{eq:subh:Psub}
		 \cPsub^{(m)} = \bigcup_{j = -N}^N \cP_j^{(m)} = \left\{ \vp \in \Zspace^m : \abs{\sum_{l = v}^w p_l } \leq 2N \text{~for all~}v, w = 1, \dots, m \right\} \;.
	\end{align}
	The series converges absolutely for all $t \in \Rspace$. $\cJ_{\vp}$ refers to products of the original (non-subharmonic) Fourier coefficient matrices occurring in Equation~\eqref{eq:background:fourier}.
\end{theorem}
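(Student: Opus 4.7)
The plan is to derive the series formula directly from Lemma~\ref{thm:subh:Q} by computing the alternating sum of block series and collapsing the resulting index sets via a telescoping argument. I would begin by substituting Equation~\eqref{eq:subh:Q:series} into $\sum_{\tilde{j}=-2N}^{2N}(-1)^{\tilde{j}}\tilde{\vQ}_{\tilde{j}}(t)$ and interchanging the finite alternating sum with the absolutely convergent double series over $m$ and $\vp$. The constant blocks contribute $\sum_{\tilde{j}=-2N}^{2N}(-1)^{\tilde{j}}\vI = \vI$ since there are $2N+1$ even and $2N$ odd indices. For the remaining terms I separate even and odd $\tilde{j}$: for $\tilde{j}=2j$ one has $\cP_{\tilde{j}/2}^{(m)}=\cP_j^{(m)}$, whereas for $\tilde{j}=2j+1$ the constraint $\abs{j+\tfrac{1}{2}-S_w}\leq N$ from Equation~\eqref{eq:subh:Q:p} with integer $S_w := \sum_{l=1}^w p_l$ is equivalent to $j - N + 1 \leq S_w \leq j + N$, i.e., to simultaneous membership in $\cP_j^{(m)}\cap\cP_{j+1}^{(m)}$. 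Hence the coefficient of $\xi_{\vp}(t)\cJ_{\vp}$ in the resulting double sum is
\begin{align*}
c_m(\vp) = \sum_{j=-N}^{N}\mathbf{1}_{\vp\in\cP_j^{(m)}} - \sum_{j=-N}^{N-1}\mathbf{1}_{\vp\in\cP_j^{(m)}\cap\cP_{j+1}^{(m)}}.
\end{align*}

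The structural observation that drives the proof is that the set $J(\vp) := \{j\in\Zspace\cap[-N,N] : \vp\in\cP_j^{(m)}\}$ is always a (possibly empty) discrete interval. This is because $\vp\in\cP_j^{(m)}$ is equivalent to $j\in\bigcap_{w=1}^m[S_w-N,S_w+N] = [\max_w S_w - N,\; \min_w S_w + N]$, which is either empty or a real interval, and the intersection with $\{-N,\dots,N\}$ preserves this property. If $J(\vp)=\{j_{\min},\dots,j_{\max}\}$ is nonempty, then the second sum in $c_m(\vp)$ counts the $j_{\max}-j_{\min}$ consecutive pairs contained in $J(\vp)$, while the first sum counts the $j_{\max}-j_{\min}+1$ elements of $J(\vp)$, so $c_m(\vp)=1$. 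If $J(\vp)=\emptyset$, both sums vanish and $c_m(\vp)=0$. This establishes the series identity~\eqref{eq:subh:series} together with the first equality $\cPsub^{(m)} = \bigcup_{j=-N}^N \cP_j^{(m)}$ of Equation~\eqref{eq:subh:Psub}.

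For the second equality in Equation~\eqref{eq:subh:Psub}, I would verify both inclusions by elementary manipulation of the partial sums. Setting $S_0:=0$, nonemptiness of $J(\vp)$ is equivalent to the three inequalities $\max_w S_w\leq 2N$, $\min_w S_w\geq -2N$, and $\max_w S_w - \min_{w'} S_{w'} \leq 2N$, which together reduce to $\abs{S_w - S_{v-1}} = \abs{\sum_{l=v}^w p_l}\leq 2N$ for all $1\leq v \leq w \leq m$ (the $v=1$ case yielding the first two inequalities). Absolute convergence of the overall series is inherited from Lemma~\ref{thm:subh:Q}, since $\vPhsub(t)$ is a finite alternating sum of absolutely convergent series. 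The main obstacle I expect is the clean combinatorial bookkeeping around the telescoping: the interval property of $J(\vp)$ is intuitively apparent from the parallelotope geometry illustrated in Figure~\ref{fig:subh}, but tracking the alternating signs carefully without over- or undercounting contributions is where care is needed.
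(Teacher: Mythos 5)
Your proposal is correct and rests on the same underlying mechanism as the paper's proof, namely that the odd-index sets satisfy $\cP_{j+\frac{1}{2}}^{(m)} = \cP_j^{(m)} \cap \cP_{j+1}^{(m)}$ so that the alternating sum telescopes; but the bookkeeping is organized differently, and in a way that is arguably tighter. The paper anchors the sum at $\tilde{\vQ}_0$ and peels off successive ``outer edges'' $\cP_{\pm j}^{(m)} \setminus \cP_{\pm(j\mp\frac{1}{2})}^{(m)}$, asserting (without explicit justification) that these edges together with $\cP_0^{(m)}$ are pairwise disjoint. Your version instead computes the net multiplicity $c_m(\vp)$ of each tuple and proves $c_m(\vp) \in \{0,1\}$ from the observation that $J(\vp) = \{\,j : \vp \in \cP_j^{(m)}\,\} \cap \{-N,\dots,N\}$ is a discrete interval, since $\vp \in \cP_j^{(m)}$ is equivalent to $j \in [\max_w S_w - N,\ \min_w S_w + N]$. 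This interval property is exactly the fact needed to make the paper's disjointness claim rigorous (e.g.\ to exclude a tuple lying in $\cP_0^{(m)}$ and $\cP_2^{(m)}$ but not $\cP_1^{(m)}$, which would be double-counted), so your argument fills in a step the paper leaves implicit. For the second equality in Equation~\eqref{eq:subh:Psub}, your reduction of nonemptiness of $J(\vp)$ to the three inequalities on the partial sums is equivalent to the paper's two-directional argument (which instead constructs an explicit witness $j = N + \min_w S_w$); both are correct, and yours has the small advantage of deriving membership and the closed-form description of $\cPsub^{(m)}$ from a single characterization.
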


\begin{proof}
By reformulating Equation~\eqref{eq:subh:series}, the series $\vPh_{\mathrm{subh}}(t)$ is given by
	\begin{align}\label{eq:proof:subh:CQ_tilde}
		\vPh_{\mathrm{subh}}(t) = \tilde{\vQ}_0(t) + \sum_{j = 1}^{N} \left( \tilde{\vQ}_{2j}(t) - \tilde{\vQ}_{2j - 1}(t) \right) + \left( \tilde{\vQ}_{-2j}(t) - \tilde{\vQ}_{-2j + 1}(t) \right)\;.
	\end{align}
	As discussed above, the integer index set $\cP^{(m)}_{\pm (j - \frac{1}{2})}$ corresponding to the block $\tilde{\vQ}_{\pm (2j - 1)}$ is a strict subset of $\cP^{(m)}_{\pm j}$, only excluding the ``outer'' edge. Therefore, the difference between blocks $\tilde{\vQ}_{\pm 2j}$ and $\tilde{\vQ}_{\pm (2j-1)}$ provides the summation over only the outer edge of the sets $\cP^{(m)}_{\pm j}$, $m = 1, \dots, \infty$. This idea is visualized in Figure~\ref{fig:subh:construct}, with the set $\cP_{\mathrm{subh}}^{(2)}$ constituted of the set $\cP_0^{(2)}$ due to the block $\tilde{\vQ}_0$, and then additional edges to the left and right due to the summands $(\tilde{\vQ}_{\pm 2j} - \tilde{\vQ}_{\pm (2j - 1)})$ for $j = 1, \dots, N$. All these subsets are disjoint, and the whole summation in Equation~\eqref{eq:proof:subh:CQ_tilde} is given by summing over the union of the edges, or equivalently over the union of the $\cP_j ^{(m)}$ for $j = -N, \dots, N$.  

	It remains to show that this union of the sets $\cP_j^{(m)}$ is described by the set expression in Equation~\eqref{eq:subh:Psub}. First, consider $\vp \in \cP^{(m)}_j$ for an arbitrary $j \in \left\{ -N, \dots, N\right\}$. With
	\begin{align}
		\abs{\sum_{l = v}^w p_l} = \abs{ (j - \sum_{l = 1}^{v-1} p_l) - (j - \sum_{l = 1}^{w} p_l)} \leq N + N \;, 
	\end{align}
	we see immediately that all sets $\cP_j^{(m)}$, $j = -N, \dots, N$, are subsets of the expression in Equation~\eqref{eq:subh:Psub}. For the inverse direction, let $\vp \in \Zspace^m$ satisfy $\abs{\sum_{l = v}^w p_l} \leq 2N$ for all natural $v < w \leq m$.  

	Consider first the case where 
	\begin{align}
		\min_{w} \sum_{l = 1}^w p_l \geq 0 \;.
	\end{align}
	Then, the chain of inequalities
	\begin{align}
		-N \leq -N + \sum_{l = 1}^w p_l \leq -N + 2N = N
	\end{align}
	holds for all $w = 1, \dots, m$, asserting that $\vp \in \cP^{(m)}_N$. Otherwise, if the minimal sum is negative, choose
	\begin{align}\label{eq:proof:subh:min}
		j = N + \min_{w} \sum_{l = 1}^w p_l
	\end{align} and denote the minimizer of Equation~\eqref{eq:proof:subh:min} by $w^*$. Clearly, $-N \leq j < N$.
	The chain of inequalities
	\begin{align}
		N =  j - \sum_{l = 1}^{w^*} p_l \geq j - \sum_{l = 1}^w p_l = N - \left( \sum_{l = 1}^w p_l - \sum_{l = 1}^{w^*}p_l \right) \geq -N
	\end{align}
	holds for all $w = 1, \dots, m$, asserting that $\vp \in \cP^{(m)}_j$. 
\end{proof}

\subsection{Implementation of the subharmonic formulation}
Before calculating the error bound of the series in Equation~\eqref{eq:subh:series}, we make a few remarks about the efficient computation of $\vPh_{\mathrm{subh}}(t)$.  
The matrix $\tilde{\vH}$ is twice as large as the equivalent non-subharmonic matrix~$\vH$ (cf. Equa\-tions~\eqref{eq:subh:H:N1} and~\eqref{eq:subh:Hsub:N1}). Hence, if Equation~\eqref{eq:subh:Q_def} were implemented directly as written, the matrix exponential should approximately require the $2^3 = 8$-fold computation time compared to the one in Equation~\eqref{eq:background:KoopHill}. However, due to the many zeros introduced by construction of the subharmonic Hill matrix, it can be decoupled into two sub-matrices of sizes $n(2N+1)$ and $2nN$ by the orthogonal similarity transform $\vM \tilde{\vH} \vM \T$, where $\vM$ is a permutation matrix that collects first all uneven row blocks of the identity matrix, and afterwards all even ones. 
This similarity transform results in a block diagonal matrix. Exemplarily with $N = 1$, the transformed subharmonic Hill matrix is (all empty entries are zero):
\begin{footnotesize}
	\begin{align}
			\tilde{\vH} &= \vM \T \begin{pmatrix}
				\vJ_{0} + \ic \omega & \vJ_{-1} &  \vJ_{-2} \\
				\vJ_{1} & \vJ_{0} & \vJ_{-1}  \\
				\vJ_2 & \vJ_{1} & \vJ_{0} - \ic \omega& \\
				& & & \vJ_{0} + 0.5 \ic \omega & \vJ_{-1} \\
				& & & \vJ_{1} & \vJ_{0} - 0.5\ic \omega 
				\end{pmatrix} \vM \;.
	\end{align}
\end{footnotesize}
By close inspection, the first block of size $n(2N+1) \times n(2N+1)$ is exactly $\vH$, i.e., the ``original'' Hill matrix associated with the Fourier series in Equation~\eqref{eq:background:fourier}. The second block $\hat{\vH}$ of size $2nN \times 2nN$ can easily be constructed from the original matrix $\vH$ by discarding the last row and column block and then subtracting $0.5 \ic \omega$ from the diagonal elements. Similarly, reordering the elements of $\tilde{\vD}$ using $\vM$ separates the larger diagonal matrix into the original $\vD$ in the first half and a diagonal matrix $\hat{\vD}$ in the second half, which is constructed from $\vD$ by removing the last $n$ rows and columns and subtracting $0.5 \ic \omega$ on the diagonal. The Koopman-Hill approximation then becomes
\begin{align}
	\tilde{\vQ}(t) = \vM \T \begin{pmatrix} \ex^{\ic \omega \vD t} \ex^{\vH t} & \vzero \\ \vzero & \ex^{\ic \omega \hat{\vD} t} \ex^{\hat{\vH}t} \end{pmatrix}   \vM \T \tilde{\vW}  \;.
\end{align}
The matrix exponentials of the two blocks, which are each (at most) the size of the original Hill matrix, can be evaluated independently. 
	Reordering the block rows of $\tilde{\vW}$ using $\vM$ does not have any effect as all block rows are the identity matrix. Summing the blocks of $\tilde{\vQ}$ with alternating sign is equivalent to adding all entries of the first exponential block, while subtracting the others pertaining to the second block. Thus, we obtain the formula for the monodromy matrix with subharmonic Koopman-Hill projection
	\begin{align}\label{eq:subh:KoopHill:blocks}
	 	\vPh(t) \approx \vPh_{\mathrm{subh}}(t) = \vW \T \ex^{-\ic \omega \vD t} \ex^{\vH t} \vW + \hat{\vW} \T \ex^{-\ic \omega \hat{\vD} t} \ex^{\hat{\vH} t} \hat{\vW} \;,
	\end{align}
	where $\vW$ is a stack of $2N+1$ identity matrices as before, and $\hat{\vW}$ is a stack of $2N$ identity matrices. 
	As two matrix exponentials of almost same size must be evaluated, this procedure is about twice as costly as evaluating the naive projection. 

\subsection{Convergence of the subharmonic formulation}
In this section, we analyze the convergence of the subharmonic Koopman-Hill projection of Equation~\eqref{eq:subh:KoopHill:blocks} using its series expression~\eqref{eq:subh:series} and the tools of the previous sections. As in Section~\ref{sec:overview}, we want to bound
\begin{align}
	\tilde{\vE}(t) = \vPh(t) - \left( \vI + \sum_{m= 1}^\infty \sum_{ \vp \in {\cPsub}^{(m)}} \xi_{\vp}(t) \cJ_{\vp} \right) = \sum_{m= 1}^\infty \sum_{ \vp \in \Zspace^m \setminus \cPsub^{(m)}} \xi_{\vp}(t) \cJ_{\vp}\;.
\end{align}
\begin{figure}[hbtp]
	\centering
	\tikzsetnextfilename{conv_norms_subh_2}
	\begin{subfigure}[t]{0.475\textwidth}
		\centering
		\includegraphics{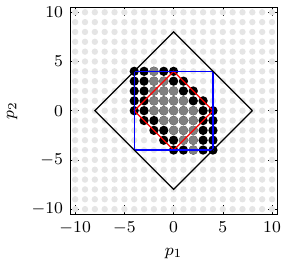}
		\caption{$N = 2$.}
		\label{fig:conv:norms:subh:2}
	\end{subfigure}
	\hfill
	\tikzsetnextfilename{conv_norms_subh_4}
	\begin{subfigure}[t]{0.475\textwidth}
		\centering
		\includegraphics{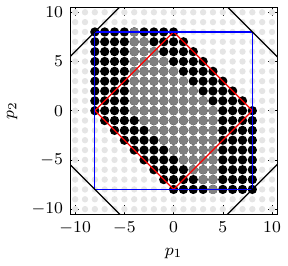}
		\caption{$N = 4$.}
		\label{fig:conv:norms:subh:4}
	\end{subfigure}
	\caption{Visualization of integer index set $\cPsub^{(2)}$ (large dots) for the subharmonic projection. $\cP_0^{(2)}$ is indicated by gray dots and additional indices only present in the subharmonic formulation by black dots. Inner rhombus (red), outer square (blue) and outer rhombus (black) are indicated.}
	\label{fig:conv:norms:subh}
\end{figure} 

Analogously to Figure~\ref{fig:conv:norms}, Figure~\ref{fig:conv:norms:subh} shows the sets $\cPsub^{(2)}$ for different truncation orders $N$ as large dots. Hereby, tuples $\vp$ that are also contained in $cP_{0}$, and thus in the direct projection of Corollary~\ref{cor:KHP}, are shown in gray while tuples that are additionally included due to the subharmonic formulation are shown in black. In the figures, the largest rhombus that is contained within $\cPsub^{(2)}$, shown in red, is of size $2N$, twice as large as in the direct projection (cf. Figure~\ref{fig:conv:norms}). This also follows immediately from the definition~\eqref{eq:subh:Psub} for arbitrary $m$. The size of the outer rhombus remains the same, compared to the direct, non-subharmonic case.
We can thus repeat the bounding steps of Section~\ref{sec:overview}, but with the size $N$ of the inner rhombus replaced by $2N$ during the counting of nonzero summands. This yields an analogous error bound, formulated in the following theorem.
\begin{theorem}[Convergence and error bound of the subharmonic Koopman-Hill projection]\label{thm:subh:error}
	Let Assumption~\ref{assu:b} hold with $b > \ln 2$. For a fixed truncation order $N \in \Nspace$, the approximation error between the true fundamental solution matrix and the subharmonic Koopman-Hill projection defined in Equation~\eqref{eq:subh:KoopHill} is bounded by
	\begin{align}\label{eq:subh:bound}
		\norm{\vPh(t) - \vCsub \ex^{\ic \omega \tilde{\vD} t}\ex^{\tilde{\vH} t} \tilde{\vW}} 
		\leq (2 \ex^{-b})^{2N} \left( \ex^{\abs{4 a t}} - 1\right)  \;.
	\end{align}
	This error bound decays exponentially with $N$, providing a convergence guarantee of the subharmonic Koopman-Hill projection. Satisfaction of a desired accuracy $\norm{\vPh(t) - \vCsub \ex^{\ic \omega \tilde{\vD} t}\ex^{\tilde{\vH} t} \tilde{\vW}} \leq E_{\mathrm{des}}$ is guaranteed if the truncation order fulfills
	\begin{align}\label{eq:subh:Nmin}
		N \geq \tilde{N}^* = \frac{ \abs{4 a t} + \ln \left( 1 - \ex^{-\abs{4 a t}}\right) - \ln(E_{\mathrm{des}})}{2 (b - \ln 2)} \;.
	\end{align}
\end{theorem}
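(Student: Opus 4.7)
The plan is to follow the same strategy as in the proof of Theorem~\ref{thm:proof:error}, but starting from the subharmonic series expression of Theorem~\ref{thm:proof:Phsub} instead of Corollary~\ref{cor:KHP}. Specifically, I would write the approximation error as
\begin{align*}
\tilde{\vE}(t) = \vPh(t) - \vPh_{\mathrm{subh}}(t) = \sum_{m=1}^\infty \sum_{\vp \in \Zspace^m \setminus \cPsub^{(m)}} \xi_{\vp}(t)\, \cJ_{\vp}\;,
\end{align*}
and then apply the triangle inequality together with the bound $|\xi_{\vp}(t)| \le |t|^m/m!$ from Theorem~\ref{thm:proof:Phi_series:xi} and the monomial bound $\|\cJ_{\vp}\| \le a^m \ex^{-b|\vp|}$ from Assumption~\ref{assu:b} to each summand individually. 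Grouping integer index tuples by their $1$-norm $M=|\vp|$ then yields the triple sum analogous to Equation~\eqref{eq:conv:counting},
\begin{align*}
\|\tilde{\vE}(t)\| \leq \sum_{m=1}^\infty \frac{|at|^m}{m!} \sum_{M=0}^\infty \ex^{-bM} \!\!\sum_{\substack{\vp \in \Zspace^m \setminus \cPsub^{(m)}\\ |\vp|=M}} 1\;.
\end{align*}

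The critical observation, which replaces the corresponding step in the proof of Theorem~\ref{thm:proof:error}, is that the largest $\ell_1$-rhombus contained in $\cPsub^{(m)}$ has radius $2N$ rather than $N$. This is visible in Figure~\ref{fig:conv:norms:subh} and follows immediately from the characterization in Equation~\eqref{eq:subh:Psub}: for any $\vp$ with $|\vp|\le 2N$ one has $|\sum_{l=v}^w p_l|\le |\vp|\le 2N$ for all $v\le w$, so $\vp\in\cPsub^{(m)}$. Consequently the sum over $M$ can be started at $M=2N+1$. Bounding the counting sum by the total number of integer tuples of given norm, which by Lemma~\ref{lem:proof:starsbars}\ref{lem:proof:starsbars:item:p} is at most $2^m\binom{M+m-1}{m-1}$, I obtain
\begin{align*}
\|\tilde{\vE}(t)\| \leq \sum_{m=1}^\infty \frac{|2at|^m}{m!} \sum_{M=2N+1}^\infty \binom{M+m-1}{m-1} \ex^{-bM}\;.
\end{align*}

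At this point the remaining steps are identical to those in the proof of Theorem~\ref{thm:proof:error}, just with $N$ replaced by $2N$. The inner series is the Taylor remainder of $(1-x)^{-m}$ at $x=\ex^{-b}$ of order $2N$, treated by Lemma~\ref{lem:series_eval}; invoking the assumption $b>\ln 2$ so that $x/(1-x)<1$ can be discarded, and extending the binomial sum to run from $0$ to $2N+m$, gives $2^{2N+m}$, and collecting factors yields
\begin{align*}
\|\tilde{\vE}(t)\| \leq (2\ex^{-b})^{2N} \sum_{m=1}^\infty \frac{|4at|^m}{m!} = (2\ex^{-b})^{2N} \bigl(\ex^{|4at|}-1\bigr)\;,
\end{align*}
which is the claimed bound~\eqref{eq:subh:bound}. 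The formula~\eqref{eq:subh:Nmin} for $\tilde{N}^*$ then follows by solving $E_{\mathrm{des}}=(2\ex^{-b})^{2\tilde{N}^*}(\ex^{|4at|}-1)$ for $\tilde{N}^*$, which explains the factor of $2$ in the denominator.

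The only step requiring genuine verification beyond a mechanical translation of the earlier proof is the claim that the radius of the largest rhombus contained in $\cPsub^{(m)}$ is exactly $2N$, rather than something smaller that happens to be $2N$ only in the two-dimensional illustration; but this is a short consequence of the defining set condition~\eqref{eq:subh:Psub} as outlined above. Everything else is a direct transcription of the argument in the proof of Theorem~\ref{thm:proof:error}, and in particular no new combinatorial counting is needed.
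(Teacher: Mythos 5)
Your proposal is correct and takes exactly the route the paper does: the paper's own proof of Theorem~\ref{thm:subh:error} is literally the instruction to repeat the proof of Theorem~\ref{thm:proof:error} with $N$ replaced by $2N$ in Equations~\eqref{eq:conv:remainder}--\eqref{eq:proof:conv:bound}, which is what you carry out, and your verification that the inner rhombus of $\cPsub^{(m)}$ has radius $2N$ (via $\abs{\sum_{l=v}^w p_l}\leq\abs{\vp}\leq 2N$) correctly supplies the one step the paper only justifies by reference to Figure~\ref{fig:conv:norms:subh} and the definition~\eqref{eq:subh:Psub}.
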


\begin{proof}
	The proof follows by replacing $N$ by $2N$ in Equations~\eqref{eq:conv:remainder} --~\eqref{eq:proof:conv:bound} within the proof of Theorem~\ref{thm:proof:error}. 
\end{proof}
\begin{remark}
	The bound in Equation~\eqref{eq:subh:bound} decays exponentially with twice the decay rate as the corresponding bound~\eqref{eq:conv:bound} of the direct formulation. 
\end{remark}

\section{Derivative-based proofs for the series expressions}\label{sec:series}
This section is dedicated to proving Theorems~\ref{thm:proof:Phi_series:xi} and~\ref{thm:proof:Q}, which were stated but left unproven in Section~\ref{sec:overview}. Both proofs follow the same idea: Showing that the series expressions of Equations~\eqref{eq:proof:Phi_series:xi} and~\eqref{eq:proof:Q:series} solve associated linear matrix initial value problems whose solutions are given by $\vPh(t)$ and $\vQ(t)$, respectively. Once absolute convergence is established, the series expressions can be differentiated summand by summand. In  each summand of Equation~\eqref{eq:proof:Phi_series:xi} or~\eqref{eq:proof:Q:series}, the only time-dependent term is the scalar factor $\xi_{\vp}(t)$. Hence, an expression for $\dot{\xi}_{\vp}(t)$ is required in the left-hand side of each matrix initial value problem. Section~\ref{sec:series:xi_p} provides a recursive relationship between $\dot{\xi}_{\vp}(t)$ and $\xi_{[p2, \dots, p_m]}(t)$. Leveraging this  relationship, Sections~\ref{sec:series:proof} and~\ref{sec:series:KoopHill} provide proofs for Theorems~\ref{thm:proof:Phi_series:xi} and~\ref{thm:proof:Q}, respectively. Finally, Section~\ref{sec:proof:subh:Q} provides the proof of Theorem~\ref{thm:subh:Q}, using Theorem~\ref{thm:proof:Q}.

\subsection{Bound and derivatives of the scalar factor \texorpdfstring{$\xi_{\vp}(t)$}{}}\label{sec:series:xi_p}
As a preliminary step for the analysis of the series expressions in Equations~\eqref{eq:proof:Phi_series:xi} and~\eqref{eq:proof:Q:series} of Theorems~\ref{thm:proof:Phi_series:xi} and~\ref{thm:proof:Q}, useful properties of the scalar factor~$\xi_{\vp}(t)$ are established in this section. First, the following lemma isolates and proves the  polynomial bound on the absolute value of the scalar factor that is depicted in Figure~\ref{fig:proof:xi_p} and mentioned in Theorem~\ref{thm:proof:Phi_series:xi}. Afterwards, a recursive differential relationship for the $\xi_{\vp}$ with varying lengths of $\vp$ is established.

\begin{lemma}[Bound of $\xi_{\vp}(t)$]\label{lem:proof:xi_p}
	For all $m \in \Nspace \setminus \left\{ 0\right\}$,  $\vp \in \Zspace^m$, and $t \in \Rspace$, the series in Equation~\eqref{eq:lem:proof:xi_p:xi} over all multi-indices of length $m$ is absolutely convergent and bounded by Equation~\eqref{eq:lem:xi_p:bound}.
\end{lemma}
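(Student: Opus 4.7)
The plan is to recognize $\xi_{\vp}(t)$ as the integral of a complex exponential over an $m$-dimensional simplex and then read off both claims from this representation. Writing $S_k := p_1 + \cdots + p_k$ for the partial sums, so that $[p_1, \ldots, p_1 + \cdots + p_m]^{\val} = \prod_{k=1}^m S_k^{\alpha_k}$, and assuming $t \geq 0$ without loss of generality, the target identity is
\begin{equation*}
	\xi_{\vp}(t) \;=\; \int_{\Delta_m(t)} \exp\!\Bigl( \ic \omega \sum_{k=1}^m u_k S_k \Bigr) \, \mathrm{d}u_1 \cdots \mathrm{d}u_m ,
\end{equation*}
where $\Delta_m(t) = \{ \vu \in \Rspace^m_{\geq 0} : u_1 + \cdots + u_m \leq t \}$ denotes the standard $m$-simplex of side $t$.

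To establish this identity I would expand $\exp$ in its multivariate power series in the variables $u_k$, integrate term by term using the classical Dirichlet formula
\begin{equation*}
	\int_{\Delta_m(t)} \prod_{k=1}^m u_k^{\alpha_k} \, \mathrm{d}u_1 \cdots \mathrm{d}u_m \;=\; \frac{\alpha_1! \cdots \alpha_m!}{(m + \abs{\val})!}\, t^{m + \abs{\val}} ,
\end{equation*}
and observe that the factorials $\alpha_k!$ produced on the right cancel the ones coming from the Taylor expansion of $\exp$, leaving exactly the series of Equation~\eqref{eq:lem:proof:xi_p:xi}. The cleanest route is to first run this calculation for the nonnegative series of absolute values, where Tonelli's theorem justifies the exchange of summation and integration unconditionally; the resulting majorant integral $\int_{\Delta_m(t)} \exp(\omega \sum_k u_k \abs{S_k}) \, \mathrm{d}u_1 \cdots \mathrm{d}u_m$ is finite because its integrand is continuous on a compact set, which establishes the absolute convergence of Equation~\eqref{eq:lem:proof:xi_p:xi}. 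The oscillatory identity for $\xi_{\vp}(t)$ itself then follows by dominated convergence with this majorant.

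Once the integral representation is in place, the bound of Equation~\eqref{eq:lem:xi_p:bound} is immediate: since $\abs{\exp(\ic x)} = 1$ for real $x$, the triangle inequality for the integral yields $\abs{\xi_{\vp}(t)} \leq \int_{\Delta_m(t)} 1 \, \mathrm{d}u_1 \cdots \mathrm{d}u_m = t^m / m!$. The only technical obstacle I anticipate is the rigorous justification of the sum-integral exchange, which is handled by the Tonelli-then-dominated-convergence pipeline above; the remaining ingredients, namely the Dirichlet volume formula on the simplex and the pointwise identity $\abs{\exp(\ic \cdot)} = 1$, are classical. Notably, this approach exploits the oscillatory cancellation in the complex phase in an essential way, since a term-by-term estimate of the series in Equation~\eqref{eq:lem:proof:xi_p:xi} would not yield the polynomial bound.
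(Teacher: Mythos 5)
Your proposal is correct, and it takes a genuinely different route from the paper. The paper's proof stays entirely at the level of the series: for absolute convergence it bounds each partial sum by $\abs{p_1+\dots+p_k}\leq\abs{\vp}$, counts multi-indices of fixed $1$-norm with a stars-and-bars argument, and recognizes an exponential series (yielding the coarse majorant $\frac{\abs{t}^m}{m!}\ex^{\abs{\vp}\abs{\omega t}}$); for the sharp bound it replaces $(m+\abs{\val})!$ by $m!\,\alpha_1!\cdots\alpha_m!$ in the denominators and factors the resulting sum into a product of unimodular exponentials $\ex^{\ic\omega(p_1+\dots+p_k)t}$. Your simplex representation is valid: unrolling the recursion of Lemma~\ref{lem:proof:xi_p:deriv} gives $\xi_{\vp}(t)$ as an iterated integral of $\ex^{\ic\omega\sum_l p_l\tau_l}$ over the ordered simplex $0\leq\tau_m\leq\dots\leq\tau_1\leq t$, and the unit-Jacobian change of variables $\tau_l=u_l+\dots+u_m$ turns this into your integral of $\ex^{\ic\omega\sum_k u_k S_k}$ over $\Delta_m(t)$; term-by-term integration with the Dirichlet formula reproduces exactly the summands of Equation~\eqref{eq:lem:proof:xi_p:xi}, Tonelli on the compact simplex gives absolute convergence, and the bound follows from $\abs{\ex^{\ic x}}=1$ together with $\mathrm{vol}(\Delta_m(t))=\abs{t}^m/m!$. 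What your version buys is that the oscillatory cancellation is handled by the triangle inequality for integrals, which is airtight; the paper's corresponding step --- enlarging each term of a \emph{complex} sum by shrinking its factorial denominator and then claiming the modulus of the whole sum can only increase --- is delicate as written, and your integral representation is precisely the device that makes that cancellation rigorous. What the paper's version buys is that it needs no measure theory, and its intermediate combinatorial estimates (Lemma~\ref{lem:proof:starsbars} and the counting of tuples by norm) are reused verbatim in the convergence proofs of Theorems~\ref{thm:proof:Phi_series:xi} and~\ref{thm:proof:error}. Your closing remark is also accurate: a summand-by-summand estimate only yields the $\ex^{\abs{\vp}\abs{\omega t}}$-inflated bound, so exploiting the phase is essential for Equation~\eqref{eq:lem:xi_p:bound}.
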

\begin{proof}
	First we establish absolute convergence. Using for all $k = 1, \dots, m$ the bound
	\begin{align}
		\left(p_1 + \dots + p_k\right)^{\alpha_k} \leq 	\left(\abs{p_1} + \dots + \abs{p_k}\right)^{\alpha_k} \leq \left(\abs{p_1} + \dots + \abs{p_k} + \dots + \abs{p_m}\right)^{\alpha_k} = \abs{\vp}^{\alpha_k} \;,
	\end{align}
	the multi-index expression in Equation~\eqref{eq:lem:proof:xi_p:xi} is bounded by
	\begin{align}\label{eq:proof:phi_p:bound}
		\left[ p_1, p_1 + p_2, \dots, p_1 + \dots + p_m\right]^{\val} \leq \abs{\vp}^{(\alpha_1 + \dots + \alpha_m)} = \abs{\vp}^{\abs{\val}} \;.
	\end{align}
	We denote summands of Equation~\eqref{eq:lem:proof:xi_p:xi} by
	\begin{align}
		s_{\val} := \frac{t^{m + \abs{\val}}}{(m + \abs{\val})!} (\ic \omega)^{\abs{\val}}\left[ p_1, p_1 + p_2, \dots, p_1 + \dots + p_m\right]^{\val} \;.
	\end{align}
	With Equation~\eqref{eq:proof:phi_p:bound}, the summands are bounded via
	\begin{align}\label{eq:proof:xi:boundp}
		\abs{s_{\val}}	& \leq \frac{\abs{t}^{m + \abs{\val}}}{(m + \abs{\val})!} \omega^{\abs{\val}} \abs{\vp}^{\abs{\val}} \;.
	\end{align}
	As the right-hand side only depends on the $1$-norm $\abs{\val}$ and not on the actual value of $\val$ itself, the sum over all $\val \in \Nspace^{m}$ can be expressed by a sum over all norms $M := \abs{\val} \in \Nspace$, where we count for each $M$ the number of tuples $\val \in \Nspace^m$ with that norm. Using Lemma~\ref{lem:proof:starsbars}\ref{lem:proof:starsbars:item:alpha} from~\ref{sec:app:combi}, there are exactly $\binom{M + m - 1}{m - 1}$ such tuples for a given norm $M$. This simplifies Equation~\eqref{eq:proof:xi:boundp} to
	\begin{align}
		\sum_{\val \in \Nspace^{m}} \abs{s_{\val}} &\leq \sum_{M = 0}^\infty \sum_{\substack{\val \in \Nspace^m \\ \abs{\val} = M}}  \frac{\abs{t}^{m + M}}{(m + M)!} \omega^{M} \abs{\vp}^{M}\\
		& = \sum_{M = 0}^\infty \frac{(M + m - 1)!}{M! (m-1)!} \frac{\abs{t}^{m + M}}{(m + M)(m + M - 1)!} \omega^M \abs{\vp}^M\\
		&= \sum_{M = 0}^\infty \frac{\abs{t}^m}{(m-1)! (m+M)} \frac{(\abs{\vp} \abs{\omega t})^M}{M!} \;.
	\end{align}
	After using $(m+M) \geq m$ to pull out the first factor, and identifying the exponential series in the second factor, the bound
	\begin{align}\label{eq:proof:series:s:bound}
		\sum_{\val \in \Nspace^{m}} \abs{s_{\val}} \leq \frac{\abs{t}^m}{m!} \ex^{\abs{\vp} \abs{\omega t}} < \infty
	\end{align}
	follows. The bound of Equation~\eqref{eq:proof:series:s:bound} is very coarse since bounding each summand $s_{\val}$ individually ignores potential cancellations due to $(\ic \omega) ^{\val}$ for different values of $\val$, but it suffices to establish absolute convergence. We can hence derive the sharper bound in Equation~\eqref{eq:lem:xi_p:bound} by considering the actual values of $s_{\val}$ rather than just their magnitudes. Using 
	$(m + \abs{\val})! \geq (m! \, \alpha_1! \, \dots \alpha_m!)$, we obtain
	\begin{align}\label{eq:proof:xi:factorials}
		\abs{\xi_{\vp}(t)} & 
		\leq \abs{\sum_{\val \in  \Nspace^m} \frac{t^{m + \abs{\val}}}{m! \, \alpha_1! \, \dots \alpha_m!} (\ic \omega)^{\abs{\val}}\left[ p_1, p_1 + p_2, \dots, p_1 + \dots + p_m\right]^{\val} }\nonumber \\
		&= \abs{ \frac{t^{m}}{m!} 
		\left( \sum_{\alpha_1 = 0}^\infty \frac{(\ic \omega p_1 t)^{\alpha_1}}{\alpha_1!} \right)
		\left( \sum_{\alpha_2 = 0}^\infty \frac{(\ic \omega (p_1 + p_2) t)^{\alpha_2}}{\alpha_2!} \right)
		\dots
		\left( \sum_{\alpha_m = 0}^\infty \frac{(\ic \omega (p_1 + \dots + p_m) t)^{\alpha_m}}{\alpha_m!} \right) }\;.
	\end{align}
	where the sums were reorganized in the second step into factors that each only depend on one $\alpha_k$.  In every round bracket of Equation~\eqref{eq:proof:xi:factorials}, the power series of the exponential function can be identified. This simplifies the expression to
	\begin{align}
		\abs{\xi_{\vp}(t)} \leq \abs{\frac{t^m}{m!} \ex^{\ic \omega p_1 t} \ex^{\ic \omega (p_1 + p_2)t }  \dots \ex^{\ic \omega (p_1 + \dots + p_m)t}} = \frac{\abs{t}^m}{m!} \;.
	\end{align}
\end{proof}
Due to Lemma~\ref{lem:proof:xi_p}, the expression for $\xi_{\vp}$ may be differentiated summand by summand. This leads after some reorganization to a recursive differential relationship between the scalar factors for various lengths of $\vp$, covered in the following lemma. 
\begin{lemma}[Derivative of $\xi_{\vp}(t)$]\label{lem:proof:xi_p:deriv}
	The derivative of $\xi_{\vp}(t)$ fulfills the following relations:
	\begin{enumerate}
		\item For $m = 1$, $p \in \Zspace$ it holds that $\dot{\xi}_p(t)= \ex^{\ic p \omega t}$. \label{enum:proof:vareq:xidiff:scalar}
		\item For $m > 1, \vp = [p_1, \dots, p_m] \in \Zspace^m$ it holds that $\dot{\xi}_{\vp}(t) = \xi_{[p_2, \dots, p_m]} (t) \, \ex^{\ic p_1 \omega t}$. \label{enum:proof:vareq:xidiff:tuple}
	\end{enumerate}
\end{lemma}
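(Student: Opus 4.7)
The plan is to prove both parts by termwise differentiation of the series defining $\xi_{\vp}(t)$. This is justified by the absolute convergence established in Lemma~\ref{lem:proof:xi_p}, since the differentiated series only drops one power of $(m+|\val|)$ in the denominator and converges by the same argument.

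For part~\ref{enum:proof:vareq:xidiff:scalar} ($m = 1$), the claim is immediate: $\xi_p(t) = \sum_{\alpha \geq 0} \frac{(\ic \omega p)^\alpha t^{\alpha+1}}{(\alpha+1)!}$, so differentiating termwise telescopes the factorial and yields $\dot{\xi}_p(t) = \sum_{\alpha \geq 0} \frac{(\ic \omega p t)^\alpha}{\alpha!} = \ex^{\ic p \omega t}$.

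For part~\ref{enum:proof:vareq:xidiff:tuple} ($m \geq 2$), the strategy is to reorganize the differentiated series $\dot{\xi}_{\vp}(t) = \sum_{\val \in \Nspace^m} \frac{t^{m-1+|\val|}}{(m-1+|\val|)!}(\ic\omega)^{|\val|} \prod_{k=1}^m Q_k^{\alpha_k}$ (with $Q_k = p_1 + \dots + p_k$) into a Cauchy product of $\ex^{\ic p_1 \omega t}$ and the series defining $\xi_{[p_2, \dots, p_m]}(t)$. Writing $Q_k = p_1 + R_k$ with $R_1 = 0$ and $R_k = p_2 + \dots + p_k$ for $k \geq 2$, each factor $Q_k^{\alpha_k}$ is expanded via the binomial theorem, which separates powers of $p_1$ from powers of the $R_k$'s. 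Introducing $\gamma$ as the total power of $p_1$ accumulated across all factors and $\beta_{k-1}$ as the power of $R_k$ for $k = 2, \dots, m$, the summation re-indexes over $(\vbe, \gamma) \in \Nspace^{m-1} \times \Nspace$, with an inner sum of products of binomial coefficients as the combinatorial weight.

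The key step is the identity $\sum_{\vj \in \Nspace^{m-1},\, |\vj| \leq \gamma} \prod_{k=1}^{m-1} \binom{\beta_k + j_k}{j_k} = \binom{m - 1 + |\vbe| + \gamma}{\gamma}$, which I would establish by induction on $m-1$: the base case $m-1 = 1$ is the hockey-stick identity, and the induction step uses the Vandermonde convolution. Substituting this identity into the reorganized series rewrites $\binom{m-1+|\vbe|+\gamma}{\gamma} / (m-1+|\vbe|+\gamma)!$ as $1 / \bigl((m-1+|\vbe|)! \, \gamma!\bigr)$, at which point the double sum factors as $\bigl(\sum_{\gamma \geq 0} (\ic p_1 \omega t)^\gamma / \gamma!\bigr) \cdot \xi_{[p_2, \dots, p_m]}(t) = \ex^{\ic p_1 \omega t} \, \xi_{[p_2, \dots, p_m]}(t)$, where the second factor is recognized from Equation~\eqref{eq:lem:proof:xi_p:xi} with partial sums $R_2, \dots, R_m$ playing the role of the $Q_k$ for the shortened tuple $[p_2, \dots, p_m]$. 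The main obstacle is the careful index bookkeeping through the binomial expansion; the combinatorial identity itself is standard, but care is needed in identifying it at the right stage of the manipulation.
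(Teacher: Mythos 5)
Your proposal is correct and follows essentially the same route as the paper: termwise differentiation justified by the absolute convergence from Lemma~\ref{lem:proof:xi_p}, binomial expansion of each factor $(p_1 + \sum_{l=2}^k p_l)^{\alpha_k}$ to isolate the powers of $p_1$, re-indexing by the total $p_1$-power and the residual exponents, and collapsing the inner combinatorial sum via the multi-dimensional Vandermonde identity (your key identity is exactly the paper's Lemma~\ref{cor:proof:prelim:vandermonde:multsums}, which the paper likewise proves by induction from a Vandermonde-type convolution). The final factorial simplification and factorization into $\ex^{\ic p_1 \omega t}\,\xi_{[p_2,\dots,p_m]}(t)$ also matches the paper's argument.
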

\begin{proof}
	For $m = 1$, the derivative of Equation~\eqref{eq:lem:proof:xi_p:xi} becomes
	\begin{align}\label{eq:proof:xi_p:deriv}
		\dot{\xi}_p(t) = \td{}{t} \left(\sum_{\alpha \in \Nspace} \frac{t^{\alpha + 1}}{(\alpha + 1)!} \left( \ic \omega p\right)^\alpha \right) = \sum_{\alpha = 0}^\infty \frac{t^\alpha}{\alpha!} (\ic \omega p)^\alpha =  \ex^{\ic p \omega t}\;.
	\end{align}
	
	For $m \geq 2$, consider a fixed $\vp \in \Zspace^m$. We define partial sums of $\vp$, excluding $p_1$, by $b_k := \sum_{l = 2}^k p_l$ for all $k = 2, \dots, m$. With the power series expression for the exponential
	$\ex^{\ic p_1 \omega t} = \sum_{\alpha_1 = 0}^\infty \frac{(\ic p_1 \omega t)^{\alpha_1}}{\alpha_1!}$ and the explicit expression for $\xi_{[p_2, \dots, p_m]}(t)$ of Equation~\eqref{eq:lem:proof:xi_p:xi}, the right-hand side of the differential relationship in Lemma~\ref{lem:proof:xi_p:deriv}-\ref{enum:proof:vareq:xidiff:tuple}) becomes
	\begin{align}\label{eq:proof:vareq:xidiff:p2pm}
		\xi_{[p_2, \dots, p_m]}(t) \ex^{\ic p_1 \omega t} = \sum_{\alpha_1 = 0}^{\infty} \, \sum_{[\alpha_2, \dots, \alpha_m] \in \Nspace^{m-1}} \, \frac{t^{m-1} (\ic \omega t)^{\alpha_1 + \alpha_2 + \dots + \alpha_m }}{(m-1+\alpha_2 + \dots + \alpha_m)! \, \alpha_1!} \, p_1^{\alpha_1} b_2^{\alpha_2} \dots b_m^{\alpha_m} \;.
	\end{align}
	In contrast, differentiation of the power series in Equation~\eqref{eq:lem:proof:xi_p:xi} summand by summand yields the left-hand side
	\begin{align}\label{eq:proof:vareq:xidiff:diff}
		\dot{\xi}_{\vp}(t) = \sum_{\val \in \Nspace^m} \frac{t^{m-1} (\ic \omega t)^{\abs{\val}}}{(m-1+\abs{\val})!} p_1^{\alpha_1} (p_1 + b_2)^{\alpha_2} \dots (p_1 + b_m)^{\alpha_m} \;.
	\end{align}
	The crucial difference between these two expressions is that the first integer index $p_1$ occurs in every factor in Equation~\eqref{eq:proof:vareq:xidiff:diff}, while it only occurs isolated in Equation~\eqref{eq:proof:vareq:xidiff:p2pm}. In addition, $\alpha_1$ is part of the large factorial in the denominator of Equation~\eqref{eq:proof:vareq:xidiff:diff}, but is isolated in Equation~\eqref{eq:proof:vareq:xidiff:p2pm}.
	To prove equivalence of these two series expressions, we will proceed to pull the index $p_1$ outside of every binomial in Equation~\eqref{eq:proof:vareq:xidiff:diff} and reorder the summations.
	
	Applying the binomial theorem to every factor in Equation~\eqref{eq:proof:vareq:xidiff:diff} yields
	\begin{align}\label{eq:proof:vareq:xidiff:binom:unsorted}
		\dot{\xi}_{\vp}(t) &= \sum_{\val \in \Nspace^m} \frac{t^{m-1} (\ic \omega t)^{\abs{\val}}}{(m-1+\abs{\val})!} p_1^{\alpha_1} \left(\sum_{n_2 = 0}^{\alpha_2} \binom{\alpha_2}{n_2}p_1^{\alpha_2 - n_2} b_2^{n_2 }\right) \dots \left(\sum_{n_m = 0}^{\alpha_m} \binom{\alpha_m}{n_m}p_1^{\alpha_m - n_m} b_m^{n_m }\right)
		\nonumber\\
		&= \sum_{\val \in \Nspace^m} \sum_{n_2 = 0}^{\alpha_2 } \dots \sum_{n_m = 0}^{\alpha_m } \frac{t^{m-1} (\ic \omega t)^{\abs{\val}}}{(m-1+\abs{\val})!} p_1^{\left(\abs{\val} - \sum_{k = 2}^m n_k\right)} \, b_2^{n_2} \dots b_m^{n_m} \, \binom{\alpha_2}{n_2} \dots \binom{\alpha_m}{n_m} \;.
	\end{align}
	Using $\sum_{\alpha_k = 0}^\infty \sum_{n_k = 0}^{\alpha_k} = \sum_{n_k = 0}^{\infty} \sum_{\alpha_k = n_k}^\infty$ for $k = 2, \dots, m$, every corresponding pair of sums can be swapped. We introduce the new summation multi-index $\vn = [n_2, \dots, n_m]$, which starts with $n_2$. Correspondingly, the shifted index $\tilde{\val} = \left[ \tilde{\alpha}_2, \dots, \tilde{\alpha}_m\right] \in \Nspace^{m-1}$ with $\tilde{\alpha}_k = \alpha_k - n_k$ also begins with $\tilde{\alpha}_2$. Substituted into the series after the sums are swapped, this yields
	\begin{align}
		\dot{\xi}_{\vp}(t) &=\sum_{\vn \in \Nspace^{m-1}} \sum_{\alpha_2 = n_2}^\infty \dots \sum_{\alpha_m = n_m}^{\infty}  \sum_{\alpha_1 = 0}^\infty  \frac{t^{m-1} (\ic \omega t)^{\abs{\val}}}{(m-1+\abs{\val})!} p_1^{\left(\abs{\val} - \sum_{k = 2}^m n_k\right)} \, b_2^{n_2} \dots b_m^{n_m} \, \binom{\alpha_2}{n_2} \dots \binom{\alpha_m}{n_m} \nonumber \\
		&= \sum_{\vn \in \Nspace^{m-1}} \sum_{\tilde{\val}\in \Nspace^{m-1}} \sum_{\alpha_1 = 0}^\infty  \frac{t^{m-1} (\ic \omega t)^{\alpha_1 + \abs{\tilde{\val}} + \abs{\vn}}}{(m-1+\alpha_1 + \abs{\tilde{\val}} + \abs{\vn})!} p_1^{\alpha_1 + \abs{\tilde{\val}}} \, b_2^{n_2} \dots b_m^{n_m} \, \binom{\tilde{\alpha}_2 + n_2}{n_2} \dots \binom{\tilde{\alpha}_m + n_m}{n_m} \;.
	\end{align}
	Next, the summation variable $\alpha_1$ is replaced by $M = \alpha_1 + \abs{\tilde{\val}}$ to yield
	\begin{align}
		\dot{\xi}_{\vp}(t) &= \sum_{\vn \in \Nspace^{m-1}}  \sum_{\tilde{\val}\in \Nspace^{m-1}} \sum_{M = \abs{\tilde{\val}}}^\infty  \frac{t^{m-1} (\ic \omega t)^{M + \abs{\vn}}}{(m-1+M + \abs{\vn})!} p_1^{M} \, b_2^{n_2} \dots b_m^{n_m} \, \binom{\tilde{\alpha}_2 + n_2}{n_2} \dots \binom{\tilde{\alpha}_m + n_m}{n_m} \;.
	\end{align}
	Note that only the binomial coefficients and the lower bound of $M$ depend on the multi-index~$\tilde{\val}$. In the next step, we intend to swap the summation over $M$ with the summation over~$\tilde{\val}$. For the sake of brevity, we temporarily collect all $\tilde{\val}$-independent terms of the summand into
	\begin{align}
		h(t, M, \vn, \vp) := \frac{t^{m-1} (\ic \omega t)^{M + \abs{\vn}}}{(m-1+M + \abs{\vn})!} p_1^{M} \, b_2^{n_2} \dots b_m^{n_m} \;.
	\end{align}
	For all summands it holds that $M \geq \abs{\tilde{\val}}$. Hence, the value of $M$ acts as an upper bound on~$\abs{\tilde{\val}}$. In other words, for any fixed value $M$, $\tilde{\alpha}_2$ can take any value that is not larger than~$M$. Then, for fixed $M$ and $\tilde{\alpha}_2$, the value of $\tilde{\alpha}_3$ can be at most $M - \tilde{\alpha}_2$, and this procedure can be continued to pull the summation over $M$ to the front and yield
	\begin{align}
		\dot{\xi}_{\vp}(t) &= \sum_{\vn \in \Nspace^{m-1}} \sum_{M = 0}^\infty  h(t, M, \vn, \vp) \sum_{\tilde{\alpha}_2 = 0}^M \sum_{\tilde{\alpha}_3 = 0}^{M - \tilde{\alpha}_2} \dots \sum_{\tilde{\alpha}_m = 0}^{M - \tilde{\alpha}_2 - \dots - \tilde{\alpha}_{m-1}}  \, \binom{\tilde{\alpha}_2 + n_2}{n_2} \dots \binom{\tilde{\alpha}_m + n_m}{n_m} \;.
	\end{align}
	The $m-1$ right-most summations are covered by Lemma~\ref{cor:proof:prelim:vandermonde:multsums} of \ref{sec:app:combi}, so all summation indices $\tilde{\alpha}_k$ for $ k = 2, \dots, m$ can be eliminated:
	\begin{align}
		\dot{\xi}_{\vp}(t) &= \sum_{\vn \in \Nspace^{m-1}} \sum_{M = 0}^\infty  h(t, M, \vn, \vp) \binom{M + (m-1) + \abs{\vn}}{M} \;.
	\end{align}
	Re-substituting $h(t, M, \vn, \vp)$ and writing the binomial coefficient in factorial form yields
	\begin{align}
		\dot{\xi}_{\vp}(t) &= \sum_{\vn \in \Nspace^{m-1}} \sum_{M = 0}^\infty  \frac{t^{m-1} (\ic \omega t)^{M + \abs{\vn}}}{(m-1+M + \abs{\vn})!} p_1^{M} \, b_2^{n_2} \dots b_m^{n_m} \frac{(M + (m-1) + \abs{\vn})!}{M! (m-1 + \abs{\vn})!} \nonumber \\
		&= \sum_{\vn \in \Nspace^{m-1}} \sum_{M = 0}^\infty  \frac{t^{m-1} (\ic \omega t)^{M + \abs{\vn}}}{(m-1+\abs{\vn} )! M !} p_1^{M} \, b_2^{n_2} \dots b_m^{n_m}\;. \label{eq:proof:vareq:xidiff:likeJxi}
	\end{align}
	Comparison of Equation~\eqref{eq:proof:vareq:xidiff:likeJxi} to Equation~\eqref{eq:proof:vareq:xidiff:p2pm} reveals $\dot{\xi}_{\vp}(t) = \xi_{[p_2, \dots, p_m]}(t) \ex^{\ic p_1 \omega t}$, concluding Case~\ref{enum:proof:vareq:xidiff:tuple}.
 \end{proof}
 \ref{sec:app:periodicity} examines a consequence of Lemma~\ref{lem:proof:xi_p:deriv}, namely that $\xi_{\vp}$ is $T$-periodic with finite support of Fourier coefficients for most values of $\vp$. This periodicity property is, however, not of immediate use for the further developments and is therefore deferred to the appendix.
 In the next subsection, we use Lemma~\ref{lem:proof:xi_p:deriv} directly as a tool to show that the series representation~\eqref{eq:proof:Phi_series:xi} fulfills the differential equation~\eqref{eq:background:ode:matrix}.
\subsection{Proof of Theorem~\ref{thm:proof:Phi_series:xi}: Series expression for \texorpdfstring{$\vPh(t)$}{true fundamental matrix}}\label{sec:series:proof}
In this section, we prove Theorem~\ref{thm:proof:Phi_series:xi} by verifying that the series expression of Equation~\eqref{eq:proof:Phi_series:xi} satisfies the matrix initial value problem of Equation~\eqref{eq:background:ode:matrix}, which uniquely defines the fundamental solution matrix $\vPh(t)$. The proof relies heavily on the derivative relationship for $\xi_{\vp}$ from Lemma~\ref{lem:proof:xi_p:deriv} of the previous section.
\begin{proof}[Proof of Theorem~\ref{thm:proof:Phi_series:xi}.]
	We begin by showing that the series in Equation~\eqref{eq:proof:Phi_series:xi} of Theorem~\ref{thm:proof:Phi_series:xi} is absolutely convergent, i.e., we want show that
	\begin{align}
		1 + \sum_{m = 1}^{\infty} \sum_{\vp \in \Zspace} \norm{\xi_{\vp}(t) \cJ_{\vp}} < \infty \;.
	\end{align} 
	In a second step we will show that the series in Equation~\eqref{eq:proof:Phi_series:xi} solves the matrix initial value problem of Equation~\eqref{eq:background:ode:matrix}, which defines the fundamental solution matrix. 

	By Assumption~\ref{assu:b}, the product $\cJ_{\vp}$ of Fourier coefficients is bounded by
	\begin{align}\label{eq:proof:series:Phi:abs}
		\norm{\cJ_{\vp}} = \norm{\vJ_{p_1}\dots \vJ_{p_m}} \leq a^m \ex^{b (p_1 + \dots+p_m)} \;.
	\end{align}
	Together with the bound for $\xi_{\vp}$ from Lemma~\ref{lem:proof:xi_p}, each summand of Equation~\eqref{eq:proof:series:Phi:abs} is bounded by
	\begin{align}\label{eq:proof:Phi_series:assu}
		\norm{\xi_{\vp}(t) \cJ_{\vp}} \leq \abs{\xi_{\vp}(t)} \norm{\cJ_{\vp}} \leq \frac{\abs{t}^m}{m!} a^m \ex^{-b\abs{\vp}} = \frac{\abs{a t}^m}{m!} \ex^{-b \abs{\vp}} \;.
	\end{align}
	Here, the matrix norm is reduced to a $1$-norm of the integer index tuple~$\vp$. As in the derivation of the error bound (Theorem~\ref{thm:proof:error}), the sum over all integer index tuples $\vp$ can be simplified to the sum over their norm $M = \abs{\vp}$, using Lemma~\ref{lem:proof:starsbars}\ref{lem:proof:starsbars:item:p} of~\ref{sec:app:combi} to count the number of tuples with a given norm $M$. This yields
	\begin{align}\label{eq:proof:Phi_series:replacep}
		\sum_{\vp \in \Zspace^m} \norm{\xi_{\vp}(t) \cJ_{\vp}} \leq \sum_{M = 0}^\infty \frac{\abs{a t}^m}{m!} \ex^{-b M}  \sum_{\substack{\vp \in \Zspace^m \\ \abs{\vp} = M} } 1
		\leq \sum_{M = 0}^\infty \frac{\abs{2 a t}^m}{m!} \binom{M+m-1}{m-1} \ex^{-b M} \;.
	\end{align}
	After pulling out the factor $\frac{\abs{2 a t}^m}{m!}$, the rightmost series in Equation~\eqref{eq:proof:Phi_series:replacep} is of the form $\sum_{M = 0}^\infty \binom{M+k}{k}M^q x^M$ with $q = 0$, $k = m-1$ and $x = \ex^{-b} < 1$. This is the Taylor series of $(1-x)^{-(1+k)}$, treated in Lemma~\ref{lem:series_eval} of~\ref{sec:app:Taylor}. Hence, Equation~\eqref{eq:proof:Phi_series:replacep} simplifies to
	\begin{align}
		\sum_{\vp \in \Zspace^m} \norm{\xi_{\vp}(t) \cJ_{\vp}} \leq \frac{\abs{2 a t}^m}{m!} \sum_{M = 0}^\infty \binom{M+m-1}{m-1} \ex^{-bM}
		=
		\frac{\abs{2 a t}^m}{m!} \left( 1 - \ex^{-b}\right)^{-m} \;.
	\end{align}
	Absolute convergence follows by 
	\begin{align}
		1 + \sum_{m = 1}^\infty \sum_{\vp \in \Zspace^{m}} \norm{\xi_{\vp}(t) \cJ_{\vp}} 
		\leq 
		1 + \sum_{m = 1}^\infty \frac{1}{m!} \left(\frac{\abs{2at}}{1 - \ex^{-b}}\right)^m 
		= \exp \left(\frac{\abs{2at}}{1 - \ex^{-b}}\right) < \infty \;.
	\end{align}
	
	With the absolute convergence established, we are now ready to show that Equation~\eqref{eq:proof:Phi_series:xi} satisfies the matrix initial value problem of Equation~\eqref{eq:background:ode:matrix}. 
	As $\xi_{\vp}(0) = 0$ for arbitrary $m \in \Nspace \setminus \left\{ 0 \right\}$ and $\vp \in \Zspace^m$, the initial condition $\vPh(0) = \vI$ is trivially satisfied. Differentiation of the series summand by summand yields the derivative
	\begin{align}\label{eq:proof:vareq:phidot}
		\dot{\vPh}(t) = \sum_{m = 1}^\infty \sum_{\vp \in \Zspace^m} \dot{\xi}_{\vp}(t) \cJ_{\vp} \;,
	\end{align}
	and the right-hand side of~the matrix initial value problem of Equation~\eqref{eq:background:ode:matrix} reads
	\begin{align}
		\vJ(t) \vPh(t) &= \left( \sum_{k = -\infty}^\infty \vJ_k \ex^{\ic k \omega t} \right) \left( \vI + \sum_{m = 1}^{\infty} \sum_{\vp \in \Zspace^m} \xi_{\vp}(t)  \cJ_{\vp} \right) \nonumber \\
		& = \sum_{p = -\infty}^\infty \ex^{\ic p \omega t} \vJ_p + \sum_{m = 1}^\infty \sum_{k = -\infty}^{\infty} \sum_{\vp \in \Zspace^m} \xi_{\vp}(t) \ex^{\ic k \omega t} \vJ_k \cJ_{\vp} \;.
	\end{align}
	With new integer index tuples $\tilde{\vp} := [k, \vp] \in \Zspace^{m+1}$ and the index shift $\tilde{m} = m+1$, this expression can be reformulated to
	\begin{align}\label{eq:proof:vareq:Jphi}
		\vJ(t) \vPh(t) &= \sum_{p = -\infty}^\infty \ex^{\ic p \omega t} \vJ_p  + \sum_{\tilde{m} = 2}^{\infty} \sum_{\tilde{\vp} \in \Zspace^{\tilde{m}} } \xi_{[\tilde{p}_2, \dots, \tilde{p}_{\tilde{m}]}}(t)\, \ex^{\ic \tilde{p}_1 \omega t}\cJ_{\tilde{\vp}} \;.
	\end{align} 
	Comparison of Equations~\eqref{eq:proof:vareq:phidot} and~\eqref{eq:proof:vareq:Jphi} reveals that the matrix initial value problem of Equation~\eqref{eq:background:ode:matrix} is satisfied if the following conditions on $\xi_{\vp}$ hold:
	\begin{enumerate}
		\item $\dot{\xi}_p(t) = \ex^{\ic p \omega t}$ if $p \in \Zspace$
		\item $\dot{\xi}_\vp(t) = \xi_{[p_2, \dots, p_m]} \ex^{\ic p_1 \omega t}$ if $\vp \in \Zspace^m$ with $m \geq 2$.
	\end{enumerate}
	As these conditions are true due to Lemma~\ref{lem:proof:xi_p:deriv}, the series of Equation~\eqref{eq:proof:Phi_series:xi} does indeed solve the matrix initial value problem for the fundamental solution matrix. Since the solution of a linear initial value problem is unique, the series of Equation~\eqref{eq:proof:Phi_series:xi} must thus be the fundamental solution matrix.
\end{proof}
\subsection{Proof of Theorem~\ref{thm:proof:Q}: Series expression for Koopman-Hill approximation \texorpdfstring{$\vQ(t)$}{Q(t)}}\label{sec:series:KoopHill}	
Differentiating Equation~\eqref{eq:proof:KoopHill:Q_defin} reveals the linear matrix initial value problem 
\begin{align} \label{eq:proof:Q:ode}
		\dot{\vQ}(t) &= \left( \ic \omega \vD + \ex^{\ic \omega \vD t} \vH \ex^{-\ic \omega \vD t} \right) \vQ(t) & 
		\vQ(0) &= \vW \;,
\end{align}
whose unique solution is $\vQ(t) = \ex^{\ic \omega \vD t} \ex^{\vH t} \vW$. Hereby, $\vQ(t)$ is constituted of $2N+1$ vertically stacked blocks of size $n \times n$, and correspondingly the matrix in round brackets in Equation~\eqref{eq:proof:Q:ode} can be separated into $2N+1$ row and column blocks each. In this section, we prove Theorem~\ref{thm:proof:Q} using the same approach as for Theorem~\ref{thm:proof:Phi_series:xi} in the previous section. Specifically, we show that the stack of blocks defined by the series expression in Equation~\eqref{eq:proof:Q:series}, for $j = -N, \dots, N$, satisfies the matrix initial value problem in Equation~\eqref{eq:proof:Q:ode}.

\begin{proof}[Proof of Theorem~\ref{thm:proof:Q}]
	We establish absolute convergence of the series expression of Equation~\eqref{eq:proof:Q:series} first. The series 
	\begin{align}
		\sum_{m = 1}^\infty \sum_{\vp \in \cP_j^{(m)}} \norm{\xi_{\vp}(t) \cJ_\vp} \label{eq:proof:Q:norm} 
	\end{align}
	is a partial sum of the series in Equation~\eqref{eq:proof:series:Phi:abs}, which was shown to converge absolutely if Assumption~\ref{assu:b} is satisfied. In the truncated case considered here, we can actually drop Assumption~\ref{assu:b}: For any multi-index $\vp$ that appears in Equation~\eqref{eq:proof:Q:norm}, each entry $p_k$ must satisfy $\abs{p_k} \leq 2N$, as discussed in Section~\ref{sec:overview}. 
	Any nonzero Fourier coefficients $\vJ_p$ with $\abs{p} > 2N$ do not affect Equation~\eqref{eq:proof:Q:norm}, so we may, without loss of generality, assume that the Fourier coefficients have finite support $2N$. This ensures absolute convergence of the series, automatically satisfying Assumption~\ref{assu:b}.
	
	It remains to show that the right-hand side of Equation~\eqref{eq:proof:Q:series} fulfills the matrix initial value problem given by Equation~\eqref{eq:proof:Q:ode}. The initial condition is true by definition. 
	For the differential equation, the approach is analogous to the proof for Theorem~\ref{thm:proof:Phi_series:xi}, only the summation limits have to be considered carefully now. 
	
	The $(jk)$-th block ($k,j = -N, \dots, N$) of the time-periodic matrix $\left(\ic \omega \vD + \ex^{\ic \omega  \vD t} \vH \ex^{-\ic \omega  \vD t} \right)$ from the initial value problem of Equation~\eqref{eq:proof:Q:ode} is
	\begin{align}
		\left(\ic \omega \vD + \ex^{\ic \omega \vD t} \vH \ex^{-\ic \omega \vD t}\right)_{jk} =  \vJ_{j-k} \ex^{\ic(j-k) \omega t} 
	\end{align}
	as the matrix $\ic \omega \vD$ removes the $\ic \omega$-terms in $\vH$ and pre-multiplication with the diagonal matrix $\ex^{\ic \omega \vD t}$ scales the rows of $\vH$, while post-mu\-lti\-pli\-cation with $\ex^{-\ic  \omega \vD t}$ scales the columns. For the purpose of this proof, we will call the right-hand side of Equation~\eqref{eq:proof:Q:ode} $\widehat{\dot{\vQ}}$ if the series expression for $\vQ$ is used, as we do not (yet) know that it actually coincides with $\dot{\vQ}$. Evaluating the matrix multiplication block by block with the series expression of Equation~\eqref{eq:proof:Q:series} from Theorem~\ref{thm:proof:Q} yields
	\begin{align}
		\widehat{\dot{\vQ}}_j(t) := \left[\left( \ic \omega \vD + \ex^{\ic \omega \vD t} \vH \ex^{-\ic \omega \vD t} \right) \vQ(t)\right]_{j} = \sum_{l = -N}^N \vJ_{j-l} \ex^{\ic (j-l) \omega t} \left( \vI + \sum_{m = 1}^\infty \sum_{\vp \in \cP_{l}^{(m)}} \xi_{\vp} (t) \cJ_{\vp} \right)
	\end{align}
	or, after the index shift $\tilde{p}_1 = j-l$ and some reorganization,
	\begin{align}
		\widehat{\dot{\vQ}}_j(t)  
		&= 
		\sum_{\tilde{p}_1 = j-N}^{j+N} \vJ_{\tilde{p}_1} \ex^{\ic \tilde{p}_1 \omega t} 
		+ \sum_{m = 1}^\infty \sum_{\tilde{p}_1= j-N}^{j+N} \sum_{[p_1, \dots, p_m] \in \cP_{j - \tilde{p}_1}^{(m)}} 
		\xi_{[p_1, \dots, p_m]}(t) \, \ex^{\ic \tilde{p}_1 \omega t} \vJ_{\tilde{p}_1} \cJ_{\vp} \;.
	\end{align}
	With the relabeling $[p_1, \dots, p_m] := [\tilde{p}_2, \dots, \tilde{p}_{m+1}]$ and the index shift in $m$ by one, the expression
	\begin{align}
		\widehat{\dot{\vQ}}_j (t)
		&= 
		\sum_{\tilde{p}_1 = j-N}^{j+N} \vJ_{\tilde{p}_1} \ex^{\ic \tilde{p}_1 \omega t} 
		+ \sum_{m = 2}^\infty \sum_{\tilde{p}_1= j-N}^{j+N} \sum_{[\tilde{p}_2, \dots, \tilde{p}_m] \in \cP_{j-\tilde{p}_1}^{(m-1)}} \xi_{[\tilde{p}_2, \dots, \tilde{p}_m]}(t) \, \ex^{\ic \tilde{p}_1 \omega t} \vJ_{\tilde{p}_1} \cJ_{[\tilde{p}_2, \dots, \tilde{p}_m]}\\ 
		&= \sum_{p = j-N}^{j+N} \vJ_p \dot{\xi}_p(t) + \sum_{m = 2}^\infty \sum_{\tilde{p}_1 = j-N}^{j+N} \sum_{[\tilde{p}_2, \dots, \tilde{p}_m] \in \cP_{j-\tilde{p}_1}^{(m-1)}} \dot{\xi}_{\tilde{\vp}}(t) \cJ_{\tilde{\vp}} \label{eq:proof:Q:mult}
	\end{align}
	follows, where the derivative properties of Lemma~\ref{lem:proof:xi_p:deriv} were used in the second step. This series is analogous to Equation~\eqref{eq:proof:vareq:Jphi} in Section~\ref{sec:series:proof}, but with nontrivial limits in the sum. 
	Evaluating the left-hand side of Equation~\eqref{eq:proof:Q:ode} by differentiating Equation~\eqref{eq:proof:Q:series} yields
	\begin{align}\label{eq:proof:Q:deriv}
		\dot{\vQ}_j(t) = \sum_{m = 1}^{\infty} \sum_{\vp \in \cP_j^{(m)}} \dot{\xi}_{\vp}(t) \cJ_{\vp} \;,
	\end{align}
	which is in turn analogous to Equation~\eqref{eq:proof:vareq:phidot}. The comparison of Equations~\eqref{eq:proof:Q:mult} and~\eqref{eq:proof:Q:deriv} reveals that the two expressions coincide if the summation sets $\cP_j^{(m)}$ and $\left\{ \vp: \abs{j - p_1} \leq N, [p_2, \dots, p_m] \in \cP^{(m-1)}_{j-p_1}\right\}$ are equal. With
	\begin{align}
		\vp  \in \cP^{(m)}_{j} &\iff \abs{j-p_1} \leq N \text{~and~} \abs{j - p_1 - \sum_{k = 1}^{w} p_{k+1}} \leq N \text{~for all~} w = 1, \dots, m-1 \\
		&\iff p_1 \in \left\{j-N, \dots, j+ N\right\} \text{~and~} [p_2, \dots, p_m] \in \cP^{(m-1)}_{j-p_1} \;,
	\end{align}
	this is indeed the case and, thu, the series expression of Equation~\eqref{eq:proof:Q:series} solves the matrix differential equation~\eqref{eq:proof:Q:ode}.
\end{proof}

\subsection{Proof of Theorem~\ref{thm:subh:Q}: Series expression for \texorpdfstring{$\tilde{\vQ}(t)$}{Q\textasciitilde(t)}}\label{sec:proof:subh:Q}
In this section, we prove Theorem~\ref{thm:subh:Q} by setting up the series according to Theorem~\ref{thm:proof:Q} for the subharmonic  Fourier series and exploiting the fact that every second Fourier coefficient vanishes.

\begin{proof}[Proof of  Theorem~\ref{thm:subh:Q}.]
	Consider a set $\left\{\vJ_k\right\}_{k =-\infty}^{\infty}$ of nontrivial (original) Fourier coefficient matrices with the corresponding Fourier series of Equation~\eqref{eq:background:fourier} with frequency $\omega$. Consider also the associated subharmonic Fourier series (cf. Equation~\eqref{eq:subh:fourierseries}) with  frequency $\tilde{\omega} = 0.5\omega$ and Fourier coefficient matrices  $\left\{\tilde{\vJ}_{\tilde{k}}\right\}_{\tilde{k} =-\infty}^{\infty}$. According to Equation~\eqref{eq:subh:J_subh}, $\tilde{\vJ}_{\tilde{k}}$ is zero for odd $\tilde{k}$. 

	Let $\tilde{\vQ}(t)$ be constructed according to Equation~\eqref{eq:subh:Q_def}, i.e., by the standard Koopman-Hill approach, but applied with even truncation order~$\tilde{N}$ to the subharmonic Fourier series of Equation~\eqref{eq:subh:fourierseries}. With Theorem~\ref{thm:proof:Q}, the $\tilde{j}$-th block of the series expression for $\tilde{\vQ}(t)$, expressed in the subharmonic Fourier coefficients $\left\{\tilde{\vJ}_{\tilde{k}}\right\}_{\tilde{k} =-\infty}^{\infty}$, is
	\begin{align}\label{eq:subh:Q:subh}
		\tilde{\vQ}_{\tilde{j}}(t) = \vI + \sum_{m = 1}^{\infty} \sum_{\tilde{\vp} \in \tilde{\cP}^{(m)}_{\tilde{j}}} \xi_{\vp}(t) \tilde{\cJ}_{\tilde{\vp}} \;.
	\end{align}
	The $\tilde{}$ symbols above $\tilde{\cJ}$ and $\tilde{\cP}$ indicate that they are evaluated with respect to the subharmonic quantities, i.e.,  $\tilde{\cJ}_{\tilde{\vp}}$ is a product of the \emph{subharmonic} Fourier coefficient matrices $\left\{\tilde{\vJ}_{\tilde{k}}\right\}_{\tilde{k} =-\infty}^{\infty}$ of which every odd one is zero, and $\tilde{\cP}_{\tilde{j}}^{(m)}$ is the integer index set of Equation~\eqref{eq:proof:Pj}, evaluated at $\tilde{j}$ and truncation order $\tilde{N}$. This proof will proceed by expressing Equation~\eqref{eq:subh:Q:subh} in terms of the original, non-subharmonic  Fourier coefficient matrices $\left\{\vJ_k\right\}_{k =-\infty}^{\infty}$ and the original integer index sets $\cP_j^{(m)}$, evaluated at truncation order $N = 0.5\tilde{N}$. 

	As the Fourier coefficient matrices $\tilde{\vJ}_{\tilde{k}}$ are zero for odd $\tilde{k}$, products $\tilde{\cJ}_{\tilde{\vp}}$ can only be nonzero if every individual entry of the integer index tuple $\tilde{\vp}$ is even. This can be ensured by requiring $\tilde{\vp} = 2 \vp$ with $\vp \in \Zspace ^m$. In other words, we seek an expression for the set of integer indices $\vp$ such that $2 \vp$ is an element of $\tilde{\cP}_{\tilde{j}} ^{(m)}$. Substituting $\tilde{\vp} = 2 \vp$ into the condition for the set $\tilde{\cP}_{\tilde{j}}^{(m)}$ yields
	\begin{align}
		\abs{\tilde{j} - \sum_{l = 1}^w 2p_l } \leq \tilde{N} \;,
	\end{align}
	or, by substituting $\tilde{N} = 2N$ and dividing by $2$, 
	\begin{align}
		\abs{\frac{\tilde{j}}{2} - \sum_{l = 1}^w p_l} \leq N \;.
	\end{align}
	This condition yields exactly the set $\cP_{\frac{\tilde{j}}{2}}$ as defined in Equation~\eqref{eq:subh:Q:p}. Thus, the summation over $\tilde{\cP}_{\tilde{j}}$ in Equation~\eqref{eq:subh:Q:subh} can equivalently be replaced by $\cP_{\frac{\tilde{j}}{2}}$, yielding
	\begin{align}
		\tilde{\vQ}_{\tilde{j}}(t) = \vI + \sum_{m = 1}^{\infty} \sum_{\vp \in \cP^{(m)}_{\frac{\tilde{j}}{2}}} \xi_{\vp}(t) \tilde{\cJ}_{2\vp} \;.
	\end{align} 
	As $\tilde{\vJ}_{2k} = \vJ_k$ implies $\tilde{\cJ}_{2\vp} = \cJ_{\vp}$, the proof is complete.
\end{proof}

\section{Examples}\label{sec:examples} 
In this section, the error bounds that were derived in Sections~\ref{sec:overview} and~\ref{sec:subharmonics} are illustrated and applied to three examples. First, we consider a scalar example with finite support of Fourier coefficients, whose fundamental solution is available in closed form. Afterwards, we discuss how our error bound can offer guaranteed stability determination of the well-known Mathieu equation. Finally, we leverage the Duffing oscillator as an example for the complete workflow including finding a periodic solution and its subsequent stability analysis.
\subsection{Scalar example} 
As a first simple example, consider the scalar linear time-periodic differential equation
\begin{align}\label{eq:example:scalar:dynamics}
		\dot{y} = J(t) y = (\beta + 2\gamma \cos t) y 
	\end{align}
with $\beta, \gamma \in\Rspace$. 
As the differential equation is scalar and linear, solutions are available in closed form. In particular, the fundamental solution matrix (here scalar), i.e., the solution initialized at $y(0) = 1$, is given by
\begin{align}\label{eq:example:scalar:phi}
	\phi(t, \beta, \gamma) = \ex^{(\beta t + 2 \gamma \sin t)} \;.
\end{align}

Using this closed-form solution, the actual error of the direct Koopman-Hill ap\-proxi\-ma\-tion with a given $N$ can be compared to the bound of Theorem~\ref{thm:proof:error}. 
The Fourier coefficients of the system matrix $J(t)$ (here scalar) can be immediately read off: $J_0 = \beta$, $J_1 = J_{-1} = \gamma$. All other Fourier coefficients are zero. Due to the finite support of the Fourier coefficients, the exponential decay condition of Assumption~\ref{assu:b} is satisfied for arbitrary $b > 0$ and the error bounds of Theorems~\ref{thm:proof:error} and~\ref{thm:subh:error} hold for all $b > \ln 2$.  Nonetheless, the choice $b$ does influence the numerical value of the error bound.
Writing $b(\varepsilon) = \ln 2 - \ln(\varepsilon)$ with $\varepsilon \in (0, 1)$, such that $b(\varepsilon)$ approaches $\ln 2$ as $\varepsilon \rightarrow 1$, the Koopman-Hill error bound in Theorem~\ref{thm:proof:error} simplifies to
\begin{align}\label{eq:example:bound}
	\abs{\vE(T)} \leq \varepsilon^N  \left(  \exp\left( 4 a  t \right) - 1 \right)\;.
\end{align}
The smaller $\varepsilon$, or equivalently the steeper the exponential decay $b$, the faster the error converges to zero. However, the lowest possible value for $a$ increases when $b$ is increased.  For a given $b(\varepsilon)$, the lowest possible value for $a$ in the scalar example with finite support considered here is $a(\varepsilon) = \max\left\{ \beta, \frac{2\gamma}{\varepsilon} \right\}$. In the first case $a=\beta$, the exponentially decaying bound $a \ex^{-b \abs{k}}$ on the Fourier coefficients is tight on the zero-th harmonic and overshoots the first harmonic, and vice versa for the other case. 

Figure~\ref{fig:example:scalar:E} shows the error bound of Equation~\eqref{eq:example:bound} for various values of $\varepsilon$ and the corresponding best values of $a(\varepsilon)$. The error bound reaches its minimum at a certain optimal value, indicated by a black cross. 
If the Fourier coefficient $\gamma$ of the first harmonic is dominant over $\beta$ (see Figure~\ref{fig:example:scalar:E:001}), the optimal decay is tight at the first harmonic with $a(\varepsilon) = \frac{2\gamma}{\varepsilon}$ and the minimum occurs approximately at $\varepsilon^* = \frac{8 t \gamma}{N}$ with $a(\varepsilon^*) = \frac{N}{4t}$. For the parameters used in this example, truncation orders $N$ between 113 and 142 guarantee a sufficiently small error. 

However, if the zero-th harmonic coefficient $\beta$ is large enough that $\beta > \frac{N}{4t}$, $a(\varepsilon^*) = \frac{N}{4t}$ is not a valid decay parameter. In this case, the error bound is minimized for $\varepsilon^{**} = \frac{2\gamma}{\beta}$, where the exponential decay is tight on both the zero-th and the first harmonic. This is visible in Figure~\eqref{fig:example:scalar:E:5} where, for smaller $N$, the minimum is located at the kink, where the expression for $a(\varepsilon)$ switches. Substituting these optimal values into the error bound of Equation~\eqref{eq:example:bound}, the best possible error bound over all choices of the exponential decay is, for the considered dynamical system with finitely supported Fourier coefficients, 
\begin{align}\label{eq:example:scalar:E*}
  E^* &= 
  \begin{cases}
    \left(\frac{8 \gamma t}{N}\right)^N\left(\ex^N - 1\right) & \text{if } \beta < \frac{N}{4t} \\
    \left(\frac{2\gamma}{\beta}\right)^N \left(\ex^{4 \beta t} - 1\right) & \text{otherwise } \;.
  \end{cases}
\end{align}

\begin{figure}[hbtp]
	\centering
	\begin{subfigure}[t]{0.48\textwidth}
		\centering
		\includegraphics{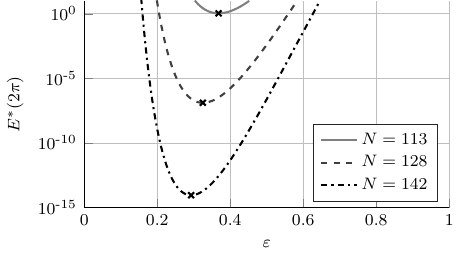}
		\caption{$\beta = 0.01$, $\gamma = 0.8$. First harmonic $\gamma$ dominates.}
		\label{fig:example:scalar:E:001}
	\end{subfigure}
	\hfill
	\begin{subfigure}[t]{0.48\textwidth}
		\centering
		\includegraphics{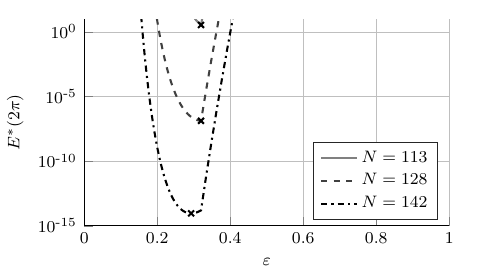}
		\caption{$\beta = 5$, $\gamma = 0.8$. Zeroth harmonic $\beta$ dominates.}
		\label{fig:example:scalar:E:5}
	\end{subfigure}
	\caption{Error bound of Equation~\eqref{eq:example:bound} over decay parameter $\varepsilon$ for $\gamma=0.8$, $t = 6.5$ and two values of $\beta$.}
	\label{fig:example:scalar:E}
\end{figure}

Equation~\eqref{eq:example:scalar:E*} can numerically be solved for $N$ to find the optimal truncation order guaranteed to satisfy a certain error bound. Figure~\ref{fig:example:bound:E} illustrates this truncation order $N^*$ across a range of desired errors for $\gamma = 0.8$. In addition, Figure~\ref{fig:example:bound:E} shows the value of $N$ that was needed to approximate the fundamental solution to this accuracy numerically. Figure~\ref{fig:example:bound:gamma} shows the truncation order $N^*$ needed to ensure an error smaller than $10^{-6}$ for various values of $\gamma$, together with its numerical equivalent. This comparison is also made for the subharmonic approach.

	\begin{figure}[hbtp]
	\centering
	\begin{subfigure}[t]{0.48\textwidth}
		\centering
		\includegraphics{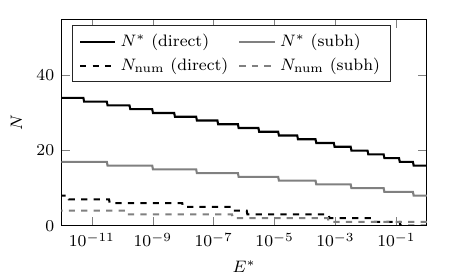}
		\caption{$N^*$ and $N_{\mathrm{num}}$ over $E^*$ for $\gamma = 0.1$.}
		\label{fig:example:bound:E}
	\end{subfigure}
	\hfill
	\begin{subfigure}[t]{0.48\textwidth}
		\centering
		\includegraphics{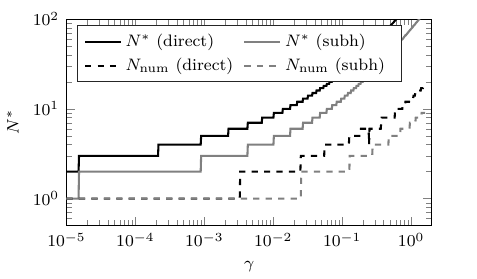}
		\caption{$N^*$ and $N_{\mathrm{num}}$ over $\gamma$ for $E^* = 10^{-6}$ .}
		\label{fig:example:bound:gamma}
	\end{subfigure}
	\caption{Guaranteed ($N^*$, solid) and actual ($N_{\mathrm{num}}$, dashed) truncation order needed to achieve an error smaller than $E^*$ in Equation~\eqref{eq:example:scalar:E*} with $\beta = 0.01, t = 6.5$ for direct (black)  and subharmonic (gray) approach.} 
	\label{fig:example:bound}
\end{figure} 

	It is immediately evident that the derived error bounds become rather conservative as $\gamma$ increases. While both the numerical and the guaranteed error grow exponentially with $\gamma$, even for this simple example, the required truncation order $N^*$ to guarantee sufficiently small errors reaches $N^* = 100$, while the truncation order that is actually needed remains around $N = 10$. This discrepancy arises from the conservative bounding steps employed throughout this work to enable closed-form expressions of the series. Nevertheless, the subharmonic approach consistently requires approximately half the truncation order of the direct approach, as predicted by the error bound. 

\subsection{Mathieu equation}\label{sec:exanple:mathieu}

The next considered example is the well-known Mathieu equation, given by
\begin{align}
	\ddot{x} + (\delta + \epsilon \cos \omega t) x = 0\;.
\end{align}
The Mathieu equation is a archetypical example of a linear time-periodic system and many applications in physics and engineering can be modeled by it, such as the interaction of gears, vibrations of membranes or the parametric roll of ships. See~\cite{Kovacic2018} for a detailed overview. In first-order form, the Mathieu equation reads
\begin{align}\label{eq:mathieu:firstorder}
	\dot{\vy} = \begin{pmatrix} 0 & 1 \\ -(\delta + \epsilon \cos \omega t) & 0 \end{pmatrix} \vy =: \vJ(t) \vy \;,
\end{align}
which is of the form of Equation~\eqref{eq:background:ode}. The Fourier coefficients of the system matrix $\vJ(t)$ are $\vJ_0 = \begin{pmatrix} 0 & 1 \\ -\delta & 0 \end{pmatrix}$, $\vJ_1 = \vJ_{-1} = \begin{pmatrix} 0 & 0 \\ -\frac{\epsilon}{2} & 0 \end{pmatrix}$, and all other Fourier coefficients are zero. Thus, the Fourier coefficients have finite support and the exponential decay condition of Assumption~\ref{assu:b} is satisfied for arbitrary $b > 0$. As in the previous example, values $a$ and $b$ that minimize the error bound of Theorem~\ref{thm:proof:error} can be identified. With $\beta = \norm{\vJ_0}$ and $\gamma = \norm{\vJ_1} = \frac{\epsilon}{2}$, the ideas of the previous section with respect to the optimal error bound of Equation~\eqref{eq:example:scalar:E*} carry over. 

\begin{figure}[hbt]
	\centering
	\begin{subfigure}[t]{0.48\textwidth}
		\centering
		\includegraphics{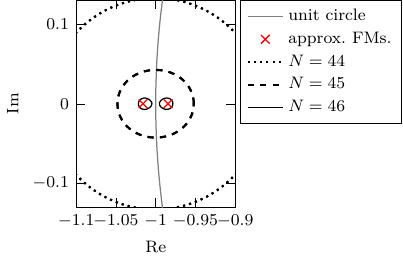}
		\caption{$\delta = -0.35485$.}
		\label{fig:matheieu:FMs:ustbl}
	\end{subfigure}
	\hfill
	\begin{subfigure}[t]{0.48\textwidth}
		\centering
		\includegraphics{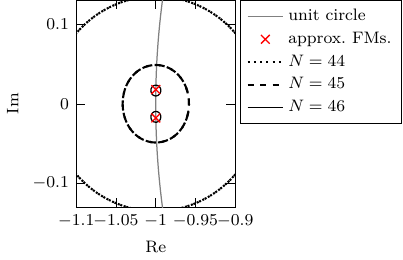}
		\caption{$\delta = -0.35490$.}
		\label{fig:mathieu:FMs:stbl}
	\end{subfigure}
	\caption{Floquet multipliers determined using subharmonic Koopman-Hill approximation and guaranteed regions for true Floquet multipliers for Mathieu equation with $\omega = 2$, $\epsilon = 2.4$.}
	\label{fig:mathieu:FMs}
\end{figure}

In contrast to the scalar example, the stability of the equilibrium of Equation~\eqref{eq:mathieu:firstorder} is not determined immediately by the monodromy matrix, but by its eigenvalues, the Floquet multipliers. However, the proposed error bounds of Theorems~\ref{thm:proof:error} and~\ref{thm:subh:error} only provide bounds for perturbations of the monodromy matrix itself. 
Bounds for the Floquet multipliers can be obtained with use of the pseudospectrum. When the 2-norm of the difference between the true and approximated monodromy matrix is bounded by $E$, the true Floquet multipliers are contained in the $E$-pseudospectrum $\Lambda_{E}$ of the approximated matrix~\cite{Trefethen1997}, given by
\begin{align}
	\Lambda_{E}(\vPh_{\mathrm{approx}}) = \left\{ z \in \Cspace : \sigma_{\mathrm{min}}(z \vI - \vPh_{\mathrm{approx}}) \leq E\right\} \;.
\end{align}
In words, for all (true) Floquet multipliers $\lambda$, the minimal singular value of $\lambda \vI - \vPh_{\mathrm{approx}}$ must be smaller than~$E$.

Figure~\ref{fig:mathieu:FMs} illustrates the Floquet multipliers determined using the subharmonic Koopman-Hill approximation for two values of $\delta$ that are close to each other. For $\delta = -0.35485$ (Figure~\ref{fig:matheieu:FMs:ustbl}), the Floquet multipliers are real with one lying outside the unit circle. For $\delta = -0.35490$ (Figure~\ref{fig:mathieu:FMs:stbl}), the Floquet multipliers are both complex with $\abs{\lambda_1} = \abs{\lambda_2} = 1$. The Floquet multipliers meet at $-1$ when $\delta$ is varied from the first to the latter value, inducing a stability change. The fundamental solution matrix is not well-conditioned around stability changes, where two Floquet multipliers coincide and the eigenspace deflates. 

For the considered values $N \in \left\{ 44, 45, 46\right\}$, the approximated Floquet multipliers are identical up to numerical precision. However, the error bound of Equation~\eqref{eq:example:scalar:E*} changes, leading to different pseudospectra. It is well known that the Floquet multipliers of the Mathieu equation either lie on the unit circle (stable case) or on the real axis (unstable case)~\cite{Kovacic2018}. For $N = 44$ and $N = 45$, the pseudospectra are large enough that they contain segments of the unit circle as well as segments of the real axis, so correct stability assertion could not be guaranteed using the error bound with these truncation orders. However, for $N = 46$, the error bound is small enough that the pseudospectra do not intersect both the unit circle and the real axis, allowing for a stability assertion that is guaranteed to be correct. 

Figure~\ref{fig:mathieu:traverse} shows the truncation orders $N$ that are required for various statements about the Mathieu equation with $\epsilon = 2.4$, $\omega = 2$ using the subharmonic Koopman-Hill approximation. The truncation orders are shown over a range of values of $a$ that traverse several stability changes of the Mathieu equation. The solid black line reports the truncation orders needed to guarantee an error of the fundamental solution matrix smaller than~$10^{-6}$, determined using Equation~\eqref{eq:example:scalar:E*}. The truncation orders needed to numerically achieve this error are shown in dashed black, determined by computing the actual error for increasing truncation orders until the desired accuracy is reached. The truncation orders needed to guarantee a correct stability assertion, shown in solid gray, are determined by increasing $N$ until the pseudospectra do not intersect both the unit circle and the real axis. The truncation orders needed to numerically achieve a correct stability assertion are reported in dashed gray. They are determined by increasing $N$ until the stability determined from the approximated Floquet multipliers matches that determined from a high-accuracy reference solution.

The truncation orders needed to guarantee or numerically achieve a certain error of the fundamental matrix are constant over the whole range of $\delta$. Consistently with the previous example and with the simplifications of the proof, the error bound is rather conservative, leading to truncation orders that are significantly larger than those needed to numerically achieve the desired accuracy. 

The truncation orders needed for correct stability assertion, however, vary over $\delta$. In the unstable region between $\delta \approx -0.3$ and $\delta \approx 2$, the correct stability assertion ``unstable'' is made immediately with truncation order $1$. This, however, is misleading because for such small truncation orders, the numerically determined Floquet multipliers are still complex conjugated, just far away from the unit circle. In the stable regions, the truncation orders needed for correct stability assertion are significantly larger, as the Floquet multipliers are close to the unit circle and a small error can lead to a wrong stability assertion. Around stability changes, the truncation orders needed for correct stability assertion peak, as the fundamental solution matrix is ill-conditioned there. However, even during these peaks, the truncation orders needed to guarantee correct stability assertion are significantly smaller than those needed to guarantee the error $10^{-6}$. This shows that ill-conditioning during stability changes is not an issue for the method. 

\begin{figure}[hbt]
	\centering
	\includegraphics{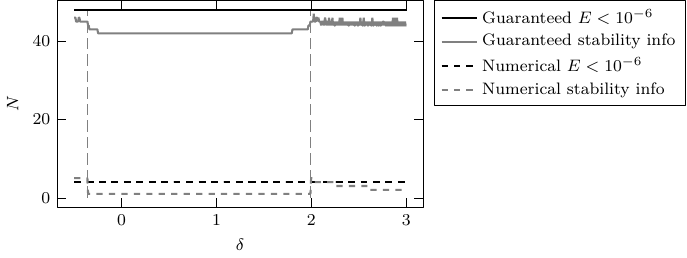}
	\caption{Truncation orders $N$ needed to guarantee (solid) or numerically achieve (dashed) error < $10^{-6}$ or correct stability assertion for Mathieu equations with $\epsilon = 2.4$, $\omega = 2$ using subharmonic Koopman-HIll approximation. Stability changes of the ODE indicated by vertical lines.}
	\label{fig:mathieu:traverse}
\end{figure}

\begin{figure}[hbt]
	\centering
	\includegraphics{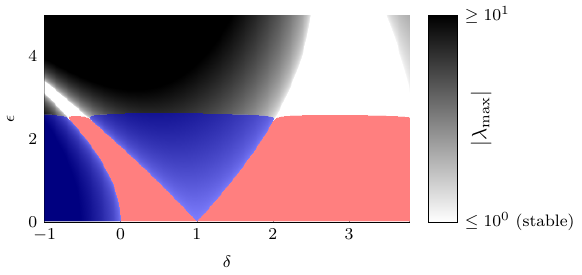}
	\caption{Ince-Strutt diagram for Mathieu equation, computed using subharmonic Koopman-Hill approximation with $N=45$. Grayscale color corresponds to maximum magnitude of computed Floquet multipliers, white ($1$) if stable. Red/blue overlay indicates parameter combinations guaranteed to be stable/unstable using the error bound.}
	\label{fig:mathieu:incestrutt}
\end{figure}

Figure~\ref{fig:mathieu:incestrutt} shows the so-called Ince-Strutt-diagram, i.e., the stability regions of the Mathieu equation over~$\delta$ and~$\epsilon$. The stability regions determined using the subharmonic Koopman-Hill approximation with truncation order $N = 45$ are shown. For every pixel, the stability was determined using the maximum magnitude of the computed Floquet multipliers. Afterwards, the unit circle and the real axis were sampled to assert whether they are both contained in the pseudospectrum of the approximate fundamental solution matrix at this point, determined using the error bound of Equation~\eqref{eq:example:scalar:E*}. 
If only one of the two sets is contained, the stability assertion is guaranteed to be correct and the pixel is colored red (guaranteed stable) or blue (guaranteed unstable). If both sets are contained, correct stability assertion is not guaranteed by the error bound and the pixel is colored in grayscale according to the maximum magnitude of the numerically determined Floquet multipliers. 

The resulting Ince-Strutt-diagram is in excellent agreement with the known stability regions of the Mathieu equation~\cite{Kovacic2018}. As expected, large values of $\epsilon$ increase the error bound, leading to large pseudospectra that contain both the real axis and the unit circle. While the numerical stability assertion is still correct, larger truncation orders $N$ could be chosen in these regions to guarantee correct stability insights. Only minor difficulties in guaranteeing stability are observed close to the stability changes, solidifying the observation that ill-conditioning of the fundamental solution matrix is not a primary issue for the method.
\subsection{Duffing oscillator}\label{sec:example:duffing}
\begin{table}[t]
	\centering
	\caption{Parameter values for  Duffing oscillator.}
	\label{tbl:duffing:params}
	\begin{tabular}{lrrrrrrr}
		\toprule
		& \multicolumn{2}{r}{stiffnesses} & damping & \multicolumn{2}{r}{excitation} & \multicolumn{2}{r}{$\norm{\vJ_k}$ decay}\\
		\cmidrule(lr){2-3}\cmidrule(lr){4-4}\cmidrule(lr){5-6}\cmidrule(lr){7-8} 
		Parameter & $\alpha$ & $\beta$ & $\delta$ & $F$ & $\omega$ & $a$ & $b$\\
		\midrule
		Configuration 1 & $5$ & $0.1$ & $0.02$ & $0.1$ & $5$ & $5.00$ & $7.40$ \\
		Configuration 2 & $0.5$ & $3$ & $0.05$ & $0.1$ & $0.3$ & $6.74$ & $1.12$ \\
		\bottomrule
	\end{tabular}
\end{table}
\begin{figure}[hbt]
	\centering
	\tikzsetnextfilename{duffing_perisol_case_1}
	\begin{subfigure}[t]{0.48\textwidth}
		\centering
		\includegraphics{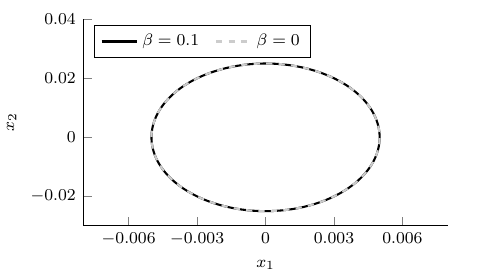}
		\caption{Periodic solution in Configuration $1$ (solid blue) and solution of the corresponding linear system (dashed black).}
		\label{fig:duffing:perisol:lin}
	\end{subfigure}
	\hfill
	\tikzsetnextfilename{duffing_perisol_case_2}
	\begin{subfigure}[t]{0.48\textwidth}
		\centering
		\includegraphics{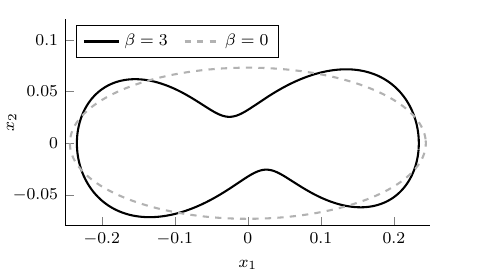}
		\caption{Periodic solution in Configuration $2$ (solid blue) and solution of the corresponding linear system (dashed black).}
		\label{fig:duffing:perisol:nonlin}
	\end{subfigure}
	\tikzsetnextfilename{duffing_spectrum_case_1}
	\begin{subfigure}[t]{0.48\textwidth}
		\centering
		\includegraphics{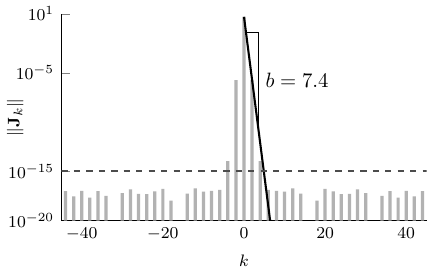}
		\caption{Norm of the Fourier coefficient matrices in Configuration 1.}
		\label{fig:duffing:spectrum:lin}
	\end{subfigure}
	\hfill
	\tikzsetnextfilename{duffing_spectrum_case_2}
	\begin{subfigure}[t]{0.48\textwidth}
		\centering
		\includegraphics{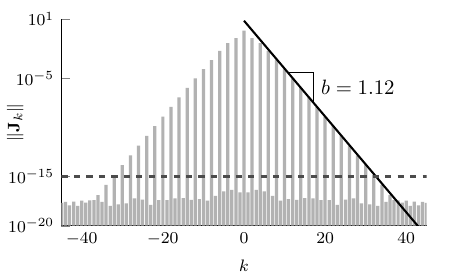}
		\caption{Norm of the Fourier coefficient matrices in Configuration 2.}
		\label{fig:duffing:spectrum:nonlin}
	\end{subfigure}
	\caption{Periodic solutions for the two considered configurations of the Duffing oscillator and norm of Fourier coefficient matrices.}
	\label{fig:duffing}
\end{figure}
In this example, we consider the forced Duffing oscillator to showcase the applicability of the error bound to stability determination of periodic solutions. 
The considered forced Duffing oscillator in first-order form and the corresponding system matrix for a perturbation around a solution $\vx(t)$ are
\begin{align}
	\dot{\vx} = \vf(\vx) = \begin{pmatrix}
		x_2\\
		-\alpha x_1 - \beta x_1^3 - \delta x_2 + F \cos \omega t
	\end{pmatrix} \;, &&
		\vJ(t) = \pdat{\vf}{\vx}{\vx(t)} = \begin{pmatrix}
			0& 1 \\ -\alpha - 3 \beta x_1(t)^2 & -\delta
		\end{pmatrix} \;.
	\end{align}
We consider two parameter settings, detailed in Table~\ref{tbl:duffing:params}.

The periodic solutions of the two configurations, depicted in Figure~\ref{fig:duffing}, were determined using the harmonic balance method with $N = 45$ Fourier coefficients. Configuration 1, where the forcing frequency is well beyond the resonance frequency and the cubic stiffness is low, could be labeled as an almost-linear case where the periodic solution visually coincides with that of the corresponding linear system ($\beta = 0$, all other parameters as in Table~\ref{tbl:duffing:params}), cf.\ Figure~\ref{fig:duffing:perisol:lin}. In Configuration 2, where the cubic stiffness is high and the forcing frequency is low, the higher-frequency components of the periodic solution due to the nonlinear effects are clearly visible in Figure~\ref{fig:duffing:perisol:nonlin}. 

The exponential decay of the Fourier coefficients is analyzed in Figures~\ref{fig:duffing:spectrum:lin} and~\ref{fig:duffing:spectrum:nonlin}. In contrast to the previous examples, we now do not have finite support of Fourier coefficients. The exponential parameters $a$ and $b$ were fitted to the norm of the Fourier coefficient matrices, where only Fourier coefficient matrices with a numerically determined norm larger than $10^{-15}$ have been taken into account due to finite numerical precision. The resulting parameters for both configurations are reported in Table~\ref{tbl:duffing:params}. 

Figure~\ref{fig:duffing:N} shows in solid lines the truncation orders $N^*$ that are expected to guarantee an error smaller than~$10^{-6}$ using the error bound~\eqref{eq:conv:Nmin}. 
At each time $t$, the fundamental solution matrices obtained using Equation~\eqref{eq:background:KoopHill} with increasing truncation orders~$N$ were compared against a reference fundamental solution matrix obtained by integrating Equation~\eqref{eq:background:ode:matrix} from $0$ to $t$ using Matlab's \texttt{ode45} numerical integrator with absolute and relative tolerances set to $10^{-10}$. 
For every considered time sample $t$, the lowest value of $N$ where the norm of the difference between these two fundamental matrices is lower than $10^{-6}$ is reported in dashed lines in Figure~\ref{fig:duffing:N}. 

As in the previous example, the error bound is not tight. In particular, the expected truncation orders $N^*$ increase linearly with $t$, while the numerically determined truncation orders $N_{\mathrm{num}}$ do not. This renders the error bound overly conservative for larger $t$, especially if $b$ is small.

\begin{figure}[hbt]
	\centering
	\begin{subfigure}[t]{0.48\textwidth}
		\centering
		\includegraphics{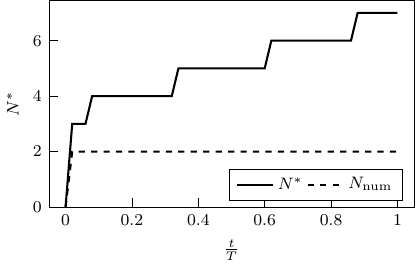}
		\caption{Configuration 1.}
		\label{fig:duffing:N:1}
	\end{subfigure}
	\hfill
	\begin{subfigure}[t]{0.48\textwidth}
		\centering
		\includegraphics{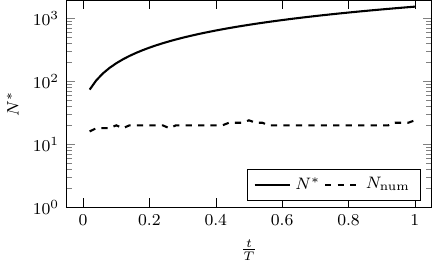}
		\caption{Configuration 2.}
		\label{fig:duffing:N:2}
	\end{subfigure}
	\caption{Truncation orders $N$ needed to guarantee (solid) and numerically achieve (dashed) an error smaller than $10^{-8}$ for the fundamental solution matrix of the Duffing oscillator.}
		\label{fig:duffing:N}
\end{figure}

\section{Conclusion}\label{sec:discussion}\label{sec:conclusion}
In this paper, we provided a convergence proof and the explicit error bounds given by Equations~\eqref{eq:conv:bound} and~\eqref{eq:subh:bound} for the numerical computation of the fundamental solution matrix of an LTP system using two variants of the Koopman-Hill projection method. The proof relies on expressing both the fundamental solution matrix as well as its approximation as series and then comparing the summands. While the series expressions are well-defined for all time-periodic system matrices where the Fourier coefficients decay exponentially, the error bounds and thus the convergence proof are valid for systems where the decay rate is sufficiently large ($b > \ln 2$).

Theorem~\ref{thm:proof:error} provides valuable theoretical backing for the use of the Hill matrix in the context of the harmonic balance method. 
Up until now, the only available convergence guarantee for any Hill-matrix-based method was in the context of the imaginary-part-based sorting method~\cite[Prop.~3]{Zhou2004}. While applicable to system matrices that are differentiable with piecewise continuous derivative, which is a larger system class than our proof, the proof of~\cite{Zhou2004} does not come with an explicit error bound and gives no indication at all about how to choose $N$ to be sufficiently large in practice. 
Thus, the error bound derived in the present work is the most explicit one available for all Hill matrix approaches and provides a helpful justification for the practitioner with an upper bound on the required truncation order $N$. 

The examples of Section~\ref{sec:examples} show that the error bound is conservative and overestimates the required truncation order $N$. This conservatism arises from the bounding steps employed to enable closed-form expressions of the series. Specifically, to upper-bound the number of index tuples with a given $1$-norm in the series for the error, \emph{all} integer index tuples that do not lie in the largest rhombus within $\cP_{0}^{(m)}$ (or $\cPsub^{(m)}$) were counted, irrespective of whether they actually lie inside $\cP_0^{(m)}$ (resp.\ $\cP^{(m)}_{\mathrm{subh}}$) or not. The ratio of tuples in this rhombus over all tuples in $\cP_0^{(m)}$ or $\cP^{(m)}_{\mathrm{subh}}$ goes to zero as $m$ grows. Hence, especially for large $m$, our error bound includes significantly more summands than necessary, contributing to overly conservative convergence requirements. 
Furthermore, the error bound depends exponentially on the time $t$. However, the numerical results do not indicate any dependence on time at all. Potentially, a tighter bound independent of $t$ could be derived by exploiting the periodicity of the scalar factors or finding a better method to count only those scalar factors that actually lie outside the eligible index set. 
Additionally, algebraic simplifications like $\left( \frac{x}{1-x}\right)^m < 1$ and $\binom{N + m + 1}{m}^{-1} < 1$ in the proof of Lemma~\ref{lem:series_eval} or summing from $0$ to $N+m$ instead of $1$ to $m$ in the final step of the convergence proof become more conservative as $N$ and $m$ grow. 
For these reasons, we postulate that a similar bound to Equation~\eqref{eq:conv:bound}, but independent of $t$ and applicable whenever Assumption~\ref{assu:b} is satisfied, is likely to exist. This would make the result tighter and applicable to all analytic linear periodic differential equations. Further work is needed to derive such a bound.

This paper focused mainly on the error in the fundamental solution matrix, caused by truncation of the Hill matrix. We illustrated in Section~\ref{sec:exanple:mathieu} how the effect of the bound on the Floquet multipliers can be analyzed using the pseudospectrum of the approximation. 

In practice, all involved operations are executed with finite machine precision on a computer. The error due to numerical procedures, in particular the error of evaluating the matrix exponential, is not considered in the error bound of Equation~\eqref{eq:conv:bound}. However, the accurate numerical evaluation of matrix exponentials has been well researched~\cite{Moler2003,AlMohy2009,Ibanez2022}, so the error due to the finite-precision procedures is expected to be negligible compared to the truncation error in practical applications.
\appendix

 \section{Construction of the series expressions of Theorems~\ref{thm:proof:Phi_series:xi} and~\ref{thm:proof:Q}}\label{sec:app:construction}
The proofs of Theorems~\ref{thm:proof:Phi_series:xi} and~\ref{thm:proof:Q} in Section~\ref{sec:series}  rely on differentiation of the proposed series expressions, verifying that they uniquely solve the associated matrix initial value problems. While this approach is concise and rigorous, it sheds no light on the underlying construction of the series representations in Equations~\eqref{eq:proof:Phi_series:xi} and~\eqref{eq:proof:Q:series}. To complement the main developments, this section outlines an alternative procedure for deriving these series. Specifically, we consider the Taylor expansions of $\vPh(t)$ and $\vQ(t)$ about $t = 0$, substitute the Fourier series representation of $\vJ(t)$, and employ inductive arguments to reorganize the resulting expressions into the forms given by Equations~\eqref{eq:proof:Phi_series:xi} and~\eqref{eq:proof:Q:series}. As complete proofs are already provided in Section~\ref{sec:series}, we restrict ourselves here to a conceptual overview, assuming absolute convergence and sufficiently large radius of convergence for all series encountered.

\subsection{Taylor series for \texorpdfstring{$\vPh(t)$}{true fundamental matrix}}\label{sec:app:taylor:Phi}
To write the fundamental solution matrix $\vPh(t)$ given by the matrix initial value problem of Equation~\eqref{eq:background:ode:matrix} as a Taylor series around $0$, all derivatives of $\vPh(t)$ and $\vQ(t)$ are required. The $0$-th and first derivative are immediately clear from the matrix initial value problem~\eqref{eq:background:ode:matrix}. Further derivatives can be computed using the differential equation and the product rule, utilizing the Fourier series expression of $\vJ(t)$. This is formalized in the following lemma.

\begin{lemma}\label{lem:app:Phi:Jn}
	For all natural $l > 0$, the $l$-th derivative of the fundamental solution matrix $\vPh(t)$ defined by Equation~\eqref{eq:background:ode:ext} is
	\begin{align}\label{eq:app:Phi:JPhi}
		\td{^l}{t^l} \vPh(t) := \vJ^{(l)}(t) \vPh(t) 
	\end{align}
	with 
	\begin{align}\label{eq:app:Phi:Jn}
		\vJ^{(l)}(t) = \sum_{m = 1}^l (\ic \omega)^{l-m} \sum_{\vp \in \Zspace^m}
 \left( \sum_{\substack{\val \in \Nspace^m\\ \abs{\val} = l - m}} \left[ p_1, (p_1 + p_2), \dots, (p_1 + \dots + p_m) \right]^{\val}\right) \cJ_{\vp} \ex^{\ic \left( \sum_{k = 1}^m p_k\right)\omega t} \;.
	\end{align}
\end{lemma}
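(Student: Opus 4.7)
The plan is to proceed by induction on $l$, using the product rule to turn the recursion $\vPh^{(l+1)} = \dot{\vJ}^{(l)} \vPh + \vJ^{(l)} \vJ \vPh$ (which follows from Equation~\eqref{eq:app:Phi:JPhi} together with~\eqref{eq:background:ode:matrix}) into a recursion
\begin{align*}
\vJ^{(l+1)}(t) = \dot{\vJ}^{(l)}(t) + \vJ^{(l)}(t) \vJ(t)
\end{align*}
for the coefficients, and then to verify that the claimed closed form satisfies this recursion.

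For the base case $l = 1$, only $m = 1$ contributes, and the inner multi-index sum forces $\val = [0]$ with $[p_1]^{[0]} = 1$ by the convention $0^0 := 1$. Using Equation~\eqref{eq:background:fourier}, the formula collapses to $\vJ^{(1)}(t) = \sum_{p_1 \in \Zspace} \vJ_{p_1} \ex^{\ic p_1 \omega t} = \vJ(t)$, which matches Equation~\eqref{eq:background:ode:matrix}. For the inductive step, I would compute the two contributions separately. Differentiating Equation~\eqref{eq:app:Phi:Jn} summand by summand in $t$ brings down a factor $\ic\omega(p_1 + \dots + p_m)$, which I absorb by incrementing the last component of the multi-index $\val$ by one. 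This identifies $\dot{\vJ}^{(l)}(t)$ with the terms of the conjectured expression for $\vJ^{(l+1)}(t)$ whose multi-index satisfies $\alpha_m \geq 1$ (the length $m$ ranges over $1,\dots,l$ since $\dot\vJ^{(l)}$ has no $m = l+1$ contribution). Multiplying $\vJ^{(l)}(t)$ from the right by the Fourier series of $\vJ(t)$ concatenates an extra integer index $q$ to the tuple $\vp$, producing tuples $\tilde{\vp} \in \Zspace^{\tilde m}$ of length $\tilde m = m+1 \in \{2,\dots,l+1\}$; extending the original $\val \in \Nspace^{m}$ to $\tilde{\val} = [\val, 0] \in \Nspace^{\tilde m}$ leaves the bracketed multi-index expression unchanged (again via $0^0 := 1$) and identifies $\vJ^{(l)}(t)\vJ(t)$ with those terms whose multi-index satisfies $\tilde{\alpha}_{\tilde m} = 0$.

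The two pieces are disjoint in the last-component parity and, case by case in $m \in \{1, \dots, l+1\}$, together exhaust all multi-indices $\val \in \Nspace^m$ with $\abs{\val} = l+1-m$: the $m=1$ contribution comes entirely from $\dot{\vJ}^{(l)}$ (since $\alpha_1 = l \geq 1$), the $m = l+1$ contribution entirely from $\vJ^{(l)}\vJ$ (since $\abs{\val}=0$ forces $\alpha_m = 0$), and for $1 < m \leq l$ the splitting $\alpha_m \geq 1$ versus $\alpha_m = 0$ partitions the multi-indices between the two contributions. Summing yields exactly Equation~\eqref{eq:app:Phi:Jn} with $l$ replaced by $l+1$.

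The main obstacle is the bookkeeping of multi-indices in the inductive step, in particular reconciling the two different ways in which the length $m$ and the multi-index $\val$ are incremented by $\dot{\vJ}^{(l)}$ (which keeps $m$ fixed and increases $\alpha_m$) and by $\vJ^{(l)}\vJ$ (which increases $m$ and appends a zero to $\val$). The convention $0^0 := 1$ from Definition~\ref{def:multiindex} is essential here, since it allows the appended zero to be invisible in the factor $[p_1, \dots, p_1+\dots+p_m]^{\val}$.
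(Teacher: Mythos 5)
Your proposal is correct and follows essentially the same route as the paper's proof: induction on $l$ via the recursion $\vJ^{(l+1)} = \dot{\vJ}^{(l)} + \vJ^{(l)}\vJ$, with the derivative term accounting for the multi-indices with $\alpha_m \geq 1$ (by incrementing the last entry of $\val$) and the product term accounting for those with $\alpha_{\tilde m} = 0$ (by appending a zero entry), the two pieces being disjoint and jointly exhaustive. The paper carries out the same splitting in its Equations~\eqref{eq:app:Phi:Jn:dot} and~\eqref{eq:app:Phi:Jn:J}, including the observation that both $m$-ranges can be extended to $1, \dots, l+1$ because the extra cases contribute no valid summands.
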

\begin{proof}
	This lemma can be proven by induction. The base case $n = 1$ with $\vJ^{(1)}(t) = \sum_{p \in \Zspace} \vJ_p \ex^{\ic p \omega t} = \vJ(t)$ is immediately clear. For the induction step, differentiation of Equation~\eqref{eq:app:Phi:JPhi} and the use of the IVP reveal
	\begin{align}
	\td{^{(l+1)}}{t^{(l+1)}} \vPh(t) = \td{}{t}\left(\vJ^{(l)}(t)\right) \vPh(t) + \vJ^{(l)}(t) \vJ(t) \vPh(t) \;,
	\end{align}
	i.e., $\vJ^{(l+1)}(t) = \td{}{t}\vJ^{(l)}(t) + \vJ^{(l)}(t) \vJ(t)$ is certainly a $T$-periodic matrix. Note that $\vJ^{(l)}(t)$ is generally not the $l$-th derivative of $\vJ(t)$. Substituting in Equation~\eqref{eq:app:Phi:Jn} and the Taylor series expansion of $\vJ(t)$ yields after some algebraic manipulation and index shifts
	\begin{align}\label{eq:app:Phi:Jn:dot}
		\td{}{t} \vJ^{(l)}(t) &= 
		\sum_{m = 1}^{l} (\ic \omega)^{l-m+1} 
		\sum_{\vp \in \Zspace^m} 
		\left( 
			\sum_{\substack{\val \in \Nspace^{m}\\
					\abs{\val} = l - m + 1 \\
					\alpha_m \geq 1}
				} 
			\left[ 
				p_1, (p_1 + p_2), \dots, (p_1 + \dots + p_m) 
			\right]^{\val} 
		\right) 
		\cJ_{\vp} \ex^{\ic \left( \sum_{k = 1}^m p_k\right) \omega t} \\
		\label{eq:app:Phi:Jn:J}
		\vJ^{(l)}(t) \vJ(t) &= 
		\sum_{m = 2}^{l+1} (\ic \omega)^{l-m+1} 
		\sum_{\vp \in \Zspace^{m} } 
		\left( 
			\sum_{\substack{\val \in \Nspace^{m}\\
					\abs{\val} = l - m + 1 \\
					\alpha_m = 0}
				} 
			\left[ 
				p_1, (p_1 + p_2), \dots, (p_1 + \dots + p_m) 
			\right]^{\val} 
		\right) 
		\cJ_{\vp} \ex^{\ic \left( \sum_{k = 1}^m p_k\right) \omega t}\;.
	\end{align}  
	The summation sets in the round brackets of Equations~\eqref{eq:app:Phi:Jn:dot} and~\eqref{eq:app:Phi:Jn:J} are disjunct. As the summation over~$\val$ in Equation~\eqref{eq:app:Phi:Jn:dot} has no valid summands for $m = l+1$ and, similarly, there are no valid summands for~$\val$ with $m = 1$ in Equation~\eqref{eq:app:Phi:Jn:J}, both series over $m$ in Equations~\eqref{eq:app:Phi:Jn:dot} and~\eqref{eq:app:Phi:Jn:J} can be extended to sum from $m = 1$ to $m = l+1$. Together, Equations~\eqref{eq:app:Phi:Jn:dot} and~\eqref{eq:app:Phi:Jn:J} constitute the complete summation within the round brackets of Equation~\eqref{eq:app:Phi:Jn}. 
\end{proof}
With Lemma~\ref{lem:app:Phi:Jn}, the Taylor series of $\vPh(t)$ is given by $\vPh(t) = \vI + \sum_{l = 1}^{\infty} \vJ^{(l)}(0) \frac{t^l}{l!}$. Substituting in Equation~\eqref{eq:app:Phi:Jn} and changing the order of the $m$- and $l$-summations yields
\begin{align}
	\vPh(t) =\vI + \sum_{m = 1 }^{\infty} \sum_{\vp \in \Zspace^m} \sum_{l = m}^{\infty} \sum_{\substack{\val \in \Nspace^m \\ \abs{\val}  = l - m}} \frac{t^l}{l!} (\ic \omega) ^{l -m} \left[ p_1, (p_1 + p_2), \dots, (p_1 + \dots + p_m) \right]^{\val} \cJ_{\vp} \;.
\end{align}
Noting that every $\val \in \Nspace^m$ fulfills $l = m + \abs{\val}$ for exactly one value of $l \geq m$, the $l$-summation can be eliminated and the desired Equation~\eqref{eq:proof:Phi_series:xi} results.

\subsection{Taylor series for \texorpdfstring{$\vQ(t)$}{Q(t)}}
To express $\vQ(t)$ as a Taylor series around $0$, again all derivatives are needed. Analogous to Lemma~\ref{lem:app:Phi:Jn}, the following lemma provides a series expression for derivatives of $\vQ(t)$.

\begin{lemma}\label{lem:app:Q:An}
	For $l \in\Nspace$, the $l$-th derivative of $\vQ(t)$ as defined in Equation~\eqref{eq:proof:KoopHill:Q_defin} is given by
	\begin{align}\label{eq:app:Q:Qdot}
		\td{^l}{t^l} \vQ(t) = \ex^{\ic \omega \vD t} \vA ^{(l)} \ex^{\vH t} \vW \;.
	\end{align}
	The matrix $\vA^{(l)} \in \Cspace^{n(2N+1) \times n(2N+1)}$ is constant and consists of blocks $\vA^{(l)}_{jk}$, $j, k = -N, \dots, N$, of size $n\times n$. For $l = 0$, it holds that $\vA^{(0)} =\vI$ and for $l \geq 1$
	\begin{align}\label{eq:app:Q:Ajk}
		\vA^{(l)}_{jk} = \sum_{m = 1}^{l} (\ic \omega)^{l-m} \sum_{\vp \in \cA_{jk}^{(m)}} \left(\sum_{\substack{\val \in \Nspace^m \\ \abs{\val} = l -m}} \left[ p_1, (p_1 + p_2), \dots, (p_1 + \dots +  p_m)\right]^{\val} \cJ_{\vp}\right) \;.
	\end{align}
	The index set $\cA_{jk}^{(m)}$ is a subset of $\Zspace^m$ that is independent of $l$ and is given by
	\begin{align}\label{eq:app:Q:Ajk:set}
		\cA_{jk}^{(m)} = \left\{  \vp \in \Zspace^m : k = j - \sum_{i = 1}^m p_i \textrm{~and~} \abs{j - \sum_{i =1}^w p_i} \leq N \textrm{~for~}w= 1, \dots, m \right\} \;.
	\end{align}
\end{lemma}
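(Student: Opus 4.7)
The plan is to proceed by induction on $l$, mirroring the strategy used for Lemma~\ref{lem:app:Phi:Jn}. The base case $l=0$ is immediate: $\vA^{(0)} = \vI$ combined with Equation~\eqref{eq:app:Q:Qdot} recovers the definition of $\vQ(t)$, and the sum in Equation~\eqref{eq:app:Q:Ajk} is vacuous. For the induction step, since $\vD$ is diagonal and hence commutes with $\ex^{\ic\omega\vD t}$, differentiating Equation~\eqref{eq:app:Q:Qdot} yields $\td{^{l+1}}{t^{l+1}}\vQ(t) = \ex^{\ic\omega\vD t}(\ic\omega\vD\vA^{(l)} + \vA^{(l)}\vH)\ex^{\vH t}\vW$, so $\vA^{(l+1)} = \ic\omega\vD\vA^{(l)} + \vA^{(l)}\vH$. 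Invoking the block structure of $\vH$ from Equation~\eqref{eq:background:H:finite}, namely $\vH_{jk} = \vJ_{j-k}$ off-diagonal and $\vH_{jj} = \vJ_0 - j\,\ic\omega\vI$, this reduces to the block-wise recursion
\begin{align*}
\vA^{(l+1)}_{jk} = \ic\omega(j-k)\,\vA^{(l)}_{jk} + \sum_{i=-N}^N \vA^{(l)}_{ji}\,\vJ_{i-k}\;.
\end{align*}

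The core task is then to substitute the inductive form~\eqref{eq:app:Q:Ajk} of $\vA^{(l)}$ into this right-hand side and rearrange the result into the same form with $l$ replaced by $l+1$. This mirrors Equations~\eqref{eq:app:Phi:Jn:dot}--\eqref{eq:app:Phi:Jn:J} in the proof of Lemma~\ref{lem:app:Phi:Jn}. The first summand generates precisely the terms with $\alpha_m \geq 1$: the prefactor $\ic\omega(j-k)$ is absorbed into the bracket $[p_1,\dots,p_1+\cdots+p_m]^{\val}$ by invoking the identity $p_1+\cdots+p_m = j-k$, which holds for every $\vp \in \cA_{jk}^{(m)}$, effectively incrementing $\alpha_m$ by one. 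The second summand, after the substitution $p_{m+1} = i-k$, extends each tuple $\vp \in \cA_{ji}^{(m)}$ to a tuple $[\vp,p_{m+1}] \in \cA_{jk}^{(m+1)}$ with $\alpha_{m+1} = 0$; the new final partial sum then equals $k$ and automatically satisfies $\abs{k}\leq N$, so the extended tuple belongs to $\cA_{jk}^{(m+1)}$.

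The main obstacle will be the careful bookkeeping of the multi-index structure combined with the restricted index sets $\cA_{jk}^{(m)}$. Specifically, one must verify that (i) the partial-sum constraints defining $\cA_{jk}^{(m)}$ are correctly preserved when extending a tuple by $p_{m+1}=i-k$ for $i\in\{-N,\dots,N\}$, so the resulting tuple lies in $\cA_{jk}^{(m+1)}$, and (ii) the two contributions (those with $\alpha_m \geq 1$ from the first term, and those with $\alpha_{m+1}=0$ from the second term, after re-indexing) together cover every admissible multi-index at level $l+1$ in a disjoint and exhaustive manner. As in Lemma~\ref{lem:app:Phi:Jn}, the $m$-ranges of the two contributions can then be harmlessly extended to $m=1,\dots,l+1$, since the missing endpoints of $m$ carry no valid summands, completing the induction.
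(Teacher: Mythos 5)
Your proposal follows essentially the same route as the paper: induction on $l$, the recursion $\vA^{(l+1)} = \ic\omega\vD\vA^{(l)} + \vA^{(l)}\vH$ obtained by differentiating Equation~\eqref{eq:app:Q:Qdot}, and the split into a term absorbing $\ic\omega(j-k)=\ic\omega\sum_i p_i$ into the last exponent ($\alpha_m \geq 1$) and a block-Toeplitz term appending $p_{m+1}=i-k$ ($\alpha_{m+1}=0$), exactly mirroring Lemma~\ref{lem:app:Phi:Jn}. The paper itself only sketches this step (checking $l=1$ explicitly and asserting the higher-$l$ induction is analogous but tedious), so your identification of the key identities for handling the sets $\cA_{jk}^{(m)}$ is consistent with, and if anything slightly more explicit than, the published argument.
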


The integer index set $\cP_j^{(m)}$ which plays an important role in the main part of this work is given by the union of $\cA^{(m)}_{jk}$ over $k = -N, \dots, N$. 
\begin{proof}
	The proof is similar to the one of Lemma~\ref{lem:app:Phi:Jn}. The induction base case $\vA^{(0)} = \vI$ is true by definition. By differentiating Equation~\eqref{eq:app:Q:Qdot}, we obtain
	\begin{align}
		\td{^{l+1}}{t^{l+1}} \vQ(t) = \ex^{\ic \omega \vD t} \left( \ic \omega \vD \vA^{(l)} + \vA^{(l)} \vH \right) \ex^{\vH t} \vW \;,
	\end{align}
	which provides a recursive relationship for $\vA^{(l)}$. A slight reformulation yields
	\begin{align}\label{eq:app:Q:An:recursive}
		\vA^{(l+1)} = \ic \omega \left( \vD \vA^{(l)} - \vA^{(l)} \vD\right) + \vA^{(l)} \left( \vH + \ic \omega \vD\right) \;.
	\end{align}
	In the first summand of Equation~\eqref{eq:app:Q:An:recursive}, the $(jk)$-th $n \times n$ block of $\vA^{(l)}$ is scaled by $\ic \omega (j-k)$. In the second summand, the addition of $\ic \omega \vD$ removes the corresponding terms on the diagonal of $\vH$ (cf.\ Equation~\eqref{eq:background:H:finite}), such that the matrix that is multiplied to $\vA^{(l)}$ is purely block-Toeplitz. 

	For $l = 1$, Equation~\eqref{eq:app:Q:An:recursive} becomes $\vA^{(1)} = \vH + \ic \omega \vD$, whose $(jk)$-th block is the Fourier coefficient $\vJ_{j-k}$ (cf.\ Equation~\eqref{eq:background:Hillmat}). With $l=1$, the summation over $m$ in Equation~\eqref{eq:app:Q:Ajk} only has one summand. It can be verified easily that the corresponding index set $\cA_{jk}^{(1)}$ contains only $p = k-j$ and thus Equation~\eqref{eq:app:Q:Ajk} becomes $\vJ_{j-k}$ for $l = 1$.
	For $l > 1$, it can be shown by an induction procedure, which is analogous to the one in~\ref{sec:app:taylor:Phi} but significantly more tedious due to the sets $\cA_{jk}^{(m)}$, that the blocks $\vA_{jk}$ are indeed given by Equation~\eqref{eq:app:Q:Ajk}. 
\end{proof}
With Lemma~\ref{lem:app:Q:An}, the $l$-th derivative of $\vQ$ at $0$ is $\vA^{(l)} \vW$. As $\vW$ consists of identity matrices, the $j$-th block of $\vA^{(l)} \vW$ can be written as $\sum_{k = -N}^N \vA_{jk}^{(l)}$. Analogously as in~\ref{sec:app:taylor:Phi}, establishing a Taylor series using all these derivatives leads to Equation~\eqref{eq:proof:Q:series}. 

 \section{Periodicity of the scalar factor}\label{sec:app:periodicity}
 An observation that can be drawn from Figure~\ref{fig:proof:xi_p} is that the scalar factor $\xi_{\vp}(t)$ has periodic components, possibly multiplied by a monomial in $t$. We show here that this monomial vanishes for many values of $\vp$, making $\xi_{\vp}(t)$ in these cases $T$-periodic and bounded.
 \begin{theorem}[sufficient criterion for $T$-periodicity of $\xi_{\vp}$] Let $m \in \Nspace \setminus \left\{0\right\}$ and consider an integer index tuple $\vp = [p_1, \dots, p_m] \in \Zspace^m$ which fulfills $\sum_{l = v}^w p_l \neq 0$ for all $v, w = 1, \dots, m$. Then, the function $\xi_{\vp}$ is $T$-periodic and has a Fourier series $\xi_{\vp}(t) =: \sum_{k = -\abs{\vp}}^{\abs{\vp}}  \xi_{\vp}^{(k)} \ex^{\ic k \omega t}$ with finite support. In particular, the $k$-th Fourier coefficient $\xi_{\vp}^{(k)}$ can only be nonzero if $k = 0$ or if there exists a $w \in \left\{1, \dots, m\right\}$ such that $k = \sum_{l = 1}^w p_l$.
 \end{theorem}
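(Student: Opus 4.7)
The plan is to argue by induction on $m$, using the recursive derivative identity of Lemma~\ref{lem:proof:xi_p:deriv} together with the initial condition $\xi_{\vp}(0) = 0$ to write
\begin{align}
\xi_{\vp}(t) = \int_0^t \xi_{[p_2, \dots, p_m]}(s)\, \ex^{\ic p_1 \omega s}\, \diff s
\end{align}
for $m \geq 2$, and $\xi_{p}(t) = \int_0^t \ex^{\ic p\omega s}\,\diff s$ for $m = 1$. The key observation is that integrating a pure exponential $\ex^{\ic k\omega s}$ from $0$ to $t$ yields a $T$-periodic result (contributing to Fourier modes $0$ and $k$) precisely when $k \neq 0$; if $k = 0$, a non-periodic $t$-linear term appears. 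Thus all I need to do is track frequencies and ensure none of them collapses to zero.

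The base case $m = 1$ is immediate: the hypothesis $\sum_{l=v}^w p_l \neq 0$ forces $p_1 \neq 0$, so $\xi_{p_1}(t) = (\ic p_1 \omega)^{-1}(\ex^{\ic p_1 \omega t} - 1)$, which is $T$-periodic with Fourier support in $\{0, p_1\}$, matching the claimed form. For the inductive step, I would first check that the hypothesis transfers: the condition $\sum_{l=v}^w p_l \neq 0$ for all $1 \leq v \leq w \leq m$ restricts in particular to the sub-tuple $[p_2, \dots, p_m]$, so the inductive hypothesis applies and gives
\begin{align}
\xi_{[p_2, \dots, p_m]}(s) = \sum_{k \in K} c_k\, \ex^{\ic k \omega s}, \qquad K \subseteq \{0\} \cup \left\{\sum_{l=2}^j p_l : j = 2, \dots, m\right\}.
\end{align}
Multiplying by $\ex^{\ic p_1 \omega s}$ shifts every frequency by $p_1$, producing frequencies that lie in $\{p_1\} \cup \{\sum_{l=1}^j p_l : j = 2, \dots, m\} = \{\sum_{l=1}^j p_l : j = 1, \dots, m\}$. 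By the special case $v = 1$ of the hypothesis, each such partial sum is nonzero, so term-by-term integration from $0$ to $t$ produces no $t$-linear contribution and yields a finite $T$-periodic Fourier sum with modes in $\{0\} \cup \{\sum_{l=1}^j p_l : j = 1, \dots, m\}$, as claimed. The bound $|k| \leq |\vp|$ on the support follows from the triangle inequality $|\sum_{l=1}^j p_l| \leq \sum_{l=1}^j |p_l| \leq |\vp|$.

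I do not expect a real obstacle here; the derivative relation in Lemma~\ref{lem:proof:xi_p:deriv} already does all the heavy lifting, and the combinatorial content reduces to bookkeeping of partial-sum frequencies. The only subtle point is making the induction rigorous at the level of frequency sets rather than just at the level of periodicity --- specifically, verifying that the hypothesis $\sum_{l=v}^w p_l \neq 0$ with $v \geq 2$ is exactly what is needed for the sub-tuple, while the case $v = 1$ is what prevents a secular $t$-linear term from arising upon integration. Once this bookkeeping is laid out cleanly, the statement about which Fourier coefficients $\xi_{\vp}^{(k)}$ may be nonzero follows directly.
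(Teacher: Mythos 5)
Your proposal is correct and follows essentially the same route as the paper's own proof: induction on $m$ via the recursive integral $\xi_{\vp}(t)=\int_0^t \xi_{[p_2,\dots,p_m]}(s)\,\ex^{\ic p_1\omega s}\,\diff s$, with the hypothesis for $v\geq 2$ powering the induction on the sub-tuple and the case $v=1$ ruling out a zero frequency (and hence a secular $t$-linear term) after the shift by $p_1$. The paper phrases the latter step as showing $\xi^{(-p_1)}_{[p_2,\dots,p_m]}=0$, which is the same observation viewed before rather than after the frequency shift.
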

 \begin{proof}
 	We prove this by induction. 
 	\paragraph{Base case $m = 1$} Integrating the first statement of Lemma~\ref{lem:proof:xi_p:deriv} for an arbitrary $p \in \Zspace \setminus \left\{ 0 \right\}$ with $\xi_p(0) = 0$ yields
	\begin{align}
		\xi_{p}(t) = \frac{1}{\ic \omega p} \left( \ex^{\ic \omega p t} - 1 \right) \;.
	\end{align}
	In particular, $\xi_p(t)$ is $T$-periodic and only the $0$-th and $p$-th Fourier coefficients are nonzero.
 	\paragraph{Induction assumption} Let $m \geq 2$. Consider an integer index tuple $\vp = [p_1, p_2, \dots, p_m] \in \Zspace^m$ fulfilling the conditions of the theorem. The tuple $[p_2, \dots, p_m] \in \Zspace^{m-1}$ fulfills the conditions of the theorem as well. The induction assumption is that $\xi_{[p_2, \dots, p_m]}(t)$ is $T$-periodic and its Fourier coefficients $\xi_{[p_2, \dots, p_m]}^{(k)}$ are only nonzero if $k = 0$ or if there exists a $w$ such that $k = \sum_{l = 2}^w p_l$.
 	\paragraph{Induction step}
 	Using the induction assumption, the second statement of Lemma~\ref{lem:proof:xi_p:deriv}, and the initial condition $\xi_{\vp}(0) = 0$, $\xi_{\vp}$ can be expressed by
 	\begin{align}
 		\xi_{\vp}(t) = \int_{0}^t \xi_{[p_2, \dots, p_m]}(\tau) \, \ex^{\ic \omega p_1 \tau} \diff \tau 
		= \int_{0}^{t} 
		\sum_{k = -\abs{\vp} + \abs{p_1}}^{\abs{\vp} -\abs{p_1}}  \xi_{[p_2, \dots, p_m]}^{(k)} \ex^{\ic \omega (k + p_1) \tau} \diff \tau \;.
 	\end{align}
 	This integral can be evaluated summand by summand. For $k = -p_1$, the exponential term in the integrand becomes $1$, yielding the non-periodic, linear term
 	\begin{align}\label{eq:openwork:nonperi}
 		\int_{0}^t \xi^{(-p_1)}_{[p_2, \dots, p_m]} \diff \tau = t \, \xi^{(-p_1)}_{[p_2, \dots, p_m]} \;.
 	\end{align}
 	Assume now that $\xi^{(-p_1)}_{[p_2, \dots, p_m]}$ is nonzero. As $p_1 = \sum_{l = 1}^1 p_l \neq 0$, by the induction assumption there must exist a~$w$ such that $-p_1 = \sum_{l = 2}^w p_l$. But this is prohibited by construction of $\vp$ as it would imply $\sum_{l = 1}^w p_l = 0$. We conclude that $\xi^{(-p_1)}_{[p_2, \dots, p_m]}$ must be zero and $\xi_{\vp}(t)$ does not have a non-periodic term of the form of  Equation~\eqref{eq:openwork:nonperi}. 
 	
 	For $k \neq -p_1$, the exponential term in the integrand does not disappear and we obtain
 	\begin{align}\label{eq:openwork:peri}
 		\int_{0}^t \xi^{(k)}_{[p_2, \dots, p_m]} \ex^{\ic \omega (k+p_1) \tau}\diff \tau = \frac{1}{\ic \omega (k + p_1)} \xi^{(k)}_{[p_2, \dots, p_m]} \left( \ex^{\ic \omega (k + p_1) t} - 1\right) \;.
 	\end{align}
 	As Equation~\eqref{eq:openwork:nonperi} vanishes and all other summands are of the form of Equation~\eqref{eq:openwork:peri}, which is $T$-periodic, $\xi_{\vp}$ is again $T$-periodic. In particular, Equation~\eqref{eq:openwork:peri} allows to read off the Fourier coefficients of~$\xi_{\vp}$:
 	\begin{subequations}
 		\begin{align}
 			\xi_{\vp}^{(0)} &= \sum_{k = -\abs{\vp}}^{\abs{\vp}} \frac{-1}{\ic \omega \left(  k + p_1 \right)} \xi^{(k)}_{[p_2, \dots, p_m]}\\
 			\xi_{\vp}^{(k)} &= \frac{1}{\ic \omega k} \xi^{(k - p_1)}_{[p_2, \dots, p_m]} & k &\neq 0 \;.
 		\end{align}
 		By the induction assumption, $\xi_{\vp}^{(k)}$ for $k \neq 0$ can only be nonzero if there is a  $w$ such that $k - p_1 = \sum_{l = 2}^w p_l$, which completes the proof.
 	\end{subequations}
 	
 \end{proof}

 \section{Combinatorics}\label{sec:app:combi}
 This section and the following one collect some particular equivalences that are needed to handle the various series expressions occurring in this work. Some of these results are well-known and revisited here for the sake of completeness, while others are rather specific. We begin by stating a well-known result on the number of multi-indices with a given 1-norm.
\begin{lemma}[Stars and bars \cite{Feller1968}]\label{lem:proof:starsbars}~
	\begin{enumerate}[label=\roman*)]
		\item The set $\left\{ \val \in \Nspace^m :  \abs{\val} = M \right\}$ has exactly $\binom{M + m - 1}{m - 1}$ elements.\label{lem:proof:starsbars:item:alpha}
		\item The set $\left\{ \vp \in \Zspace^m :  \abs{\vp} = M \right\}$ has fewer than $2^m \binom{M + m - 1}{m - 1}$ elements.\label{lem:proof:starsbars:item:p}
	\end{enumerate}
\end{lemma}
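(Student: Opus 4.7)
The plan is to prove the two parts separately, with part (ii) following from part (i) by a sign-decomposition argument.

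For part (i), I would establish the classical stars-and-bars bijection. The set in question consists of multi-indices $\val = [\alpha_1, \dots, \alpha_m] \in \Nspace^m$ solving $\alpha_1 + \cdots + \alpha_m = M$. I would associate to each such $\val$ a binary string of length $M + m - 1$ consisting of $M$ ``stars'' and $m - 1$ ``bars'', where the $k$-th group of consecutive stars (delimited by bars) has exactly $\alpha_k$ elements. This map is a bijection: any choice of positions for the $m-1$ bars among the $M + m - 1$ slots uniquely determines the $\alpha_k$ and vice versa. The number of such binary strings is $\binom{M + m - 1}{m - 1}$, yielding the claim.

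For part (ii), I would decompose each integer index tuple $\vp \in \Zspace^m$ with $\abs{\vp} = M$ into its componentwise absolute value $\val := [\abs{p_1}, \dots, \abs{p_m}] \in \Nspace^m$ and a sign vector $\vsi \in \{-1, +1\}^m$ such that $p_k = \sigma_k \abs{p_k}$. By definition of the $1$-norm, $\abs{\val} = \sum_k \abs{p_k} = \abs{\vp} = M$, so $\val$ is counted by part~(i). Since there are at most $2^m$ choices of sign vector for each $\val$, the map $(\val, \vsi) \mapsto \vp$ from the product set into $\{\vp \in \Zspace^m : \abs{\vp} = M\}$ is surjective, giving the upper bound $2^m \binom{M + m - 1}{m - 1}$.

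The only subtlety, which explains why the bound is strict rather than tight, is that the decomposition map is not injective: whenever some entry $p_k = 0$, both sign choices $\sigma_k = +1$ and $\sigma_k = -1$ yield the same integer tuple $\vp$, so tuples containing zeros are overcounted. This is exactly why the statement uses ``fewer than'' rather than ``equal to''. No special care is needed to quantify this overcounting because the upper bound already suffices for the applications in the main text. I do not anticipate any genuine obstacle; the argument is entirely elementary and the only thing to watch is to phrase part~(ii) as an inequality rather than an equality to account for this non-injectivity.
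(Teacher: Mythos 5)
Your proof is correct and takes essentially the same route as the paper: part~(i) is the classical stars-and-bars count (which the paper simply cites from \cite{Feller1968}), and part~(ii) is the same sign-vector decomposition, observing that each nonnegative tuple $\val$ with $\abs{\val}=M$ has at most $2^m$ sign assignments and that the resulting map onto $\left\{\vp \in \Zspace^m : \abs{\vp}=M\right\}$ is surjective. One minor caveat: your justification of the \emph{strict} inequality via overcounting of zero entries breaks down for $m=1$ and $M>0$, where no entry can be zero and the set has exactly $2 = 2^{1}\binom{M}{0}$ elements; the paper's own proof likewise only establishes ``at most,'' and since only the non-strict bound is used downstream this is a defect of the lemma's phrasing rather than of your argument.
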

\begin{proof}
	Proposition~\ref{lem:proof:starsbars:item:alpha} is a classical combinatorics result that can be found, e.g., in~\cite{Feller1968}. For any nonnegative tuple $\val \in \Nspace$ there are at most $2^m$ tuples $\vp \in \Zspace$ which fulfill $\abs{p_k} = \alpha_k$ for all $k = 1, \dots, m$ as every entry can be either positive or negative (yielding two independent options for each of the $m$ entries), except if it is zero. This immediately proves Proposition~\ref{lem:proof:starsbars:item:p}.
\end{proof}
Next, we provide here a variant of Vandermonde's identity~\cite{Feller1968} with the summation index in the upper entries, which enables a follow-up result about summation over multiple indices.
\begin{lemma}\label{lem:proof:prelim:combin:2sum}
	For arbitrary $n, M, P \in \Nspace$, the following expression holds:
	\begin{align}
		\sum_{\alpha = 0}^M \binom{\alpha + n}{n} \binom{M + P - \alpha}{M-\alpha} = \binom{M + P + n + 1}{M} \;.\label{eq:proof:vandermonde}
	\end{align}
\end{lemma}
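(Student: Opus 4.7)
The plan is to recognize the identity as a specialization of the Vandermonde--Chu convolution, since both factors in the summand are negative binomial coefficients. First I would rewrite the two binomials using the symmetries $\binom{\alpha+n}{n}=\binom{\alpha+n}{\alpha}$ and $\binom{M+P-\alpha}{M-\alpha}=\binom{M+P-\alpha}{P}$, so that the sum assumes the standard convolution form
\begin{align}
\sum_{\alpha = 0}^{M} \binom{\alpha+n}{\alpha}\binom{(M-\alpha)+P}{P} \;.
\end{align}

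Next I would invoke the generating-function identity $(1-x)^{-(k+1)} = \sum_{j\ge 0}\binom{j+k}{j}x^{j}$, valid for $\abs{x}<1$. The two factors in the summand are then precisely the coefficients of $x^{\alpha}$ in $(1-x)^{-(n+1)}$ and of $x^{M-\alpha}$ in $(1-x)^{-(P+1)}$, so the sum is the coefficient of $x^{M}$ in the Cauchy product
\begin{align}
(1-x)^{-(n+1)}\,(1-x)^{-(P+1)} \;=\; (1-x)^{-(n+P+2)} \;.
\end{align}
Reading off the coefficient of $x^{M}$ on the right-hand side via the same negative binomial expansion then yields $\binom{M+n+P+1}{M}$, which is the asserted value.

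I do not expect any genuine obstacle here; the only subtle point is the index bookkeeping in step one (ensuring that the symmetries are applied with the correct upper and lower arguments so that the resulting convolution really matches $x^{M}$). As an alternative route, if one prefers a proof within the paper's style without invoking generating functions, the identity can be established by induction on $M$: the case $M=0$ is immediate, and the induction step follows from Pascal's rule $\binom{a}{b}=\binom{a-1}{b-1}+\binom{a-1}{b}$ applied to $\binom{M+P-\alpha}{M-\alpha}$, which splits the sum into two shifted instances of the claim at lower values of $M$ (or of $P$). Either route delivers the identity with only elementary manipulations.
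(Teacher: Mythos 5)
Your proof is correct, but it takes a genuinely different route from the paper. The paper proves the identity by a direct counting argument: it counts the $(P+n+1)$-element subsets of $\{1,\dots,M+P+n+1\}$ in two ways, conditioning on the value of the $(n+1)$-th smallest element, which produces the left-hand side as a sum over the possible offsets $\alpha$. You instead recognize both factors as coefficients of the negative binomial series $(1-x)^{-(k+1)}=\sum_{j\ge 0}\binom{j+k}{j}x^j$ and read the identity off as the coefficient of $x^M$ in the Cauchy product $(1-x)^{-(n+1)}(1-x)^{-(P+1)}=(1-x)^{-(n+P+2)}$. Your index bookkeeping is right: $\binom{\alpha+n}{n}$ is the coefficient of $x^\alpha$ in the first factor, $\binom{M+P-\alpha}{M-\alpha}$ is the coefficient of $x^{M-\alpha}$ in the second, and the coefficient of $x^M$ on the right is indeed $\binom{M+P+n+1}{M}$. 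Your approach is arguably more economical here, since the very series $(1-x)^{-(k+1)}=\sum_M\binom{M+k}{k}x^M$ is already established and used elsewhere in the paper (Lemma~\ref{lem:series_eval}), so the identity could be obtained as a byproduct of machinery the paper needs anyway; the paper's counting proof, on the other hand, is fully self-contained and avoids any appeal to convergence or formal power series. The induction alternative you sketch at the end is plausible but not carried out, so the generating-function argument is the one doing the work, and it is complete.
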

\begin{proof}
	This proof follows a classical counting argument. Consider the set $\cD := \left\{ 1, \dots, M + P + n + 1\right\}$, which contains all strictly positive natural numbers up to ${M+P + n + 1}$. The number of subsets of the form $\cA := \left\{ a_1, \dots, a_{P+n+1} \right\} \subset \cD$ with $P + n + 1$ unique elements $a_1 < a_2 < \dots < a_{P+n+1}$ is $\binom{M + P + n + 1}{P+n+1} = \binom{M + P + n + 1}{M}$, which is the right-hand side of Equation~\eqref{eq:proof:vandermonde}. 
	
	For the left-hand side, we construct another way to count these subsets.		
	The $(n+1)$-th element $a_{n+1}$ must have a value between $n+1$ (implying $a_k = k$ for the $n$ elements $a_k$ with $k < n+1$) and $n+1+M$ (implying $a_k = M + k$ for  the $P$ elements with $k > n+1$). 	
	Suppose that $a_{n+1} = n+1+\alpha$ for some $\alpha \in \left\{0, \dots, M\right\}$. As the elements before $a_{n+1}$ must have smaller value and the elements after must have larger value, these subsets of $\cA$ must fulfill
	\begin{align}
		\cA_{-} &:= \left\{ a_1, \dots, a_{n}\right\} \subset \left\{1, \dots, n + \alpha \right\} \\
		\cA_{+} &:= \left\{ a_{n+2}, \dots, a_{n+P+1}\right\} \subset \left\{ n + \alpha + 2, \dots, M + P + n + 1\right\} \;.
	\end{align}
	Hence, for $\cA_{-}$ we choose $n$ out of $n+\alpha$ values, while for $\cA_{+}$ we choose $P$ out of $M+P-\alpha$ values. In summary, for every fixed $\alpha$ there are $\binom{n+\alpha}{n} \binom{M+P-\alpha}{P} = \binom{n+\alpha}{n} \binom{M+P-\alpha}{M-\alpha} $ subsets~$\cA$ where $a_{n+1} = n + 1 + \alpha$, and summing over all possible values of $\alpha$ completes the proof.
\end{proof}
This lemma enables a follow-up statement that deals with multiple sums.
\begin{lemma}\label{cor:proof:prelim:vandermonde:multsums}
	For arbitrary $M \in \Nspace$, $m \in \Nspace \setminus \left\{ 0 \right\}$ and $\vn = [n_1, \dots, n_m] \in \Nspace^m$, the following expression given by $m$ finite-length sums holds:
	\begin{align}
		\sum_{\alpha_1 = 0}^M \sum_{\alpha_2 = 0}^{M - \alpha_1} \dots \sum_{\alpha_m = 0}^{M - \alpha_1 - \dots - \alpha_{m-1}} \binom{\alpha_1 + n_1}{n_1} \dots \binom{\alpha_m + n_m}{n_m} = \binom{m + M + \abs{\vn}}{M} \;. \label{eq:proof:prelim:vandermonde:multsums}
	\end{align}
\end{lemma}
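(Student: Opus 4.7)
The plan is to prove Lemma~\ref{cor:proof:prelim:vandermonde:multsums} by induction on $m$, using Lemma~\ref{lem:proof:prelim:combin:2sum} as the key inductive tool. The right-hand side has the form $\binom{m + M + \abs{\vn}}{M}$, which matches exactly the structure obtained by applying Lemma~\ref{lem:proof:prelim:combin:2sum} one summation at a time.

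For the base case $m = 1$, the statement reduces to the identity $\sum_{\alpha_1 = 0}^M \binom{\alpha_1 + n_1}{n_1} = \binom{1 + M + n_1}{M}$, which is a classical hockey-stick identity. It also follows directly from Lemma~\ref{lem:proof:prelim:combin:2sum} by choosing $n = n_1$, $P = 0$, and noting $\binom{M - \alpha}{M - \alpha} = 1$.

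For the induction step, I would assume the formula holds for $m - 1$ and consider the $m$-fold nested sum. The inner $m-1$ sums run over $\alpha_2, \dots, \alpha_m$ with the upper bound of $\alpha_2$ being $M - \alpha_1$ and the remaining bounds being shifted consistently. Treating $\alpha_1$ as fixed, these inner sums have exactly the structure of Lemma~\ref{cor:proof:prelim:vandermonde:multsums} with $m$ replaced by $m-1$, upper bound $M$ replaced by $M - \alpha_1$, and $\vn$ replaced by $[n_2, \dots, n_m]$. By the induction hypothesis, they evaluate to $\binom{(m-1) + (M - \alpha_1) + (n_2 + \dots + n_m)}{M - \alpha_1}$. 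What remains is the outer sum
\begin{align*}
\sum_{\alpha_1 = 0}^M \binom{\alpha_1 + n_1}{n_1} \binom{(m-1) + (n_2 + \dots + n_m) + (M - \alpha_1)}{M - \alpha_1}\;,
\end{align*}
which fits the left-hand side of Lemma~\ref{lem:proof:prelim:combin:2sum} exactly with the substitutions $n \mapsto n_1$ and $P \mapsto (m-1) + (n_2 + \dots + n_m)$. Applying that lemma yields $\binom{M + (m-1) + (n_2 + \dots + n_m) + n_1 + 1}{M} = \binom{m + M + \abs{\vn}}{M}$, completing the induction.

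I do not expect any serious obstacle here: the matching of variables between the induction hypothesis and Lemma~\ref{lem:proof:prelim:combin:2sum} is the only bookkeeping step, and it is clean because the exponent $m$ on the right-hand side increases by exactly $1$ with each application, mirroring the ``$+1$'' in Equation~\eqref{eq:proof:vandermonde}. The mild care needed is to verify that the upper limits of the nested sums, after fixing $\alpha_1$, exactly reproduce the nested-sum template of the lemma at order $m-1$, which they do by a straightforward relabeling.
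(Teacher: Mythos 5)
Your proposal is correct and follows essentially the same route as the paper: induction on $m$ with base case given by Lemma~\ref{lem:proof:prelim:combin:2sum} at $P=0$, applying the induction hypothesis to the inner $m-1$ sums (with $M$ replaced by $M-\alpha_1$), and closing with Lemma~\ref{lem:proof:prelim:combin:2sum} using $P = (m-1) + n_2 + \dots + n_m$. The bookkeeping of the substitutions matches the paper's argument exactly.
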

\begin{proof}
	We prove this statement using Lemma~\ref{lem:proof:prelim:combin:2sum} and induction. 
	\paragraph{Base case $m = 1$} The base case is covered immediately by Lemma~\ref{lem:proof:prelim:combin:2sum} with $P = 0$.
	\paragraph{Induction assumption} Assume that Lemma~\ref{cor:proof:prelim:vandermonde:multsums} holds for the case $m-1$, i.e. for $m-1$ summation symbols. By increasing the index of all $\alpha_i$ and $n_i$ by 1 and replacing $M$ by $M-\alpha_1$ for arbitrary $\alpha_1$ and $M \geq \alpha_1$, we can rewrite the induction assumption as
	\begin{align}
		\sum_{\alpha_2 = 0}^{M - \alpha_1} \dots \sum_{\alpha_m = 0}^{M - \alpha_1 - \dots - \alpha_{m-1}} \binom{\alpha_2 + n_2}{n_2} \dots \binom{\alpha_m + n_m}{n_m} = \binom{(m-1) + (M-\alpha_1) + n_2 + \dots + n_m}{M - \alpha_1} \;. \label{eq:proof:prelim:vandermonde:multsums:m-1}
	\end{align}
	\paragraph{Induction step}
	In Equation~\eqref{eq:proof:prelim:vandermonde:multsums}, the first binomial coefficient depends only on $\alpha_1$ and can be pulled outside of the sums over $\alpha_2, \dots, \alpha_m$. Afterwards, the induction assumption Equation~\eqref{eq:proof:prelim:vandermonde:multsums:m-1} can be identified in the inner sums and the statement left to prove is
	\begin{align}
		\sum_{\alpha_1 = 0}^M \binom{\alpha_1 + n_1}{n_1} \binom{m - 1 + M - \alpha_1 + n_2 + \dots + n_m }{M - \alpha_1} = \binom{m + M + \abs{\vn}}{M} \;.
	\end{align}
	As this statement is covered by Lemma~\ref{lem:proof:prelim:combin:2sum} with $P = m-1+ n_2 + \dots + n_m$, the proof is complete.
\end{proof}

\section{Taylor series of \texorpdfstring{$(1-x)^{-(1+k)}$}{(1-x)\textasciicircum(-(1+k))}}\label{sec:app:Taylor}
The following power series plays a central role in the convergence proof.
\begin{lemma}\label{lem:series_eval}
	For arbitrary $k \in \Nspace$, $q \in \Rspace$ with $q  \geq 0$, the power series
	\begin{align}
		\sum_{M = 0}^{\infty} \binom{M+k}{k} M^q x^M \label{eq:lem:series_eval:q}
	\end{align}
	converges absolutely if $\abs{x} < 1$. In the case $q = 0$, the series has the closed-form expression
	\begin{align}
		\sum_{M = 0}^{\infty} \binom{M+k}{k} x^M  =  (1 - x)^{-(1+k)} =: g(x) \;. \label{eq:lem:series_eval:noq}
	\end{align}
	The Taylor remainder for a Taylor polynomial of degree $N$ of this series is given by
	\begin{align}\label{eq:lem:series_eval:remainder}
		R_N(x) := \sum_{M = N+1}^{\infty} \binom{M + k}{k}x^M = x^N \left( \sum_{m = 0}^k \binom{N + k + 1}{N + m + 1} \left( \frac{x}{1-x}\right)^{m + 1} \right) \;.
	\end{align}
\end{lemma}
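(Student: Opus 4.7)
I would prove Lemma~\ref{lem:series_eval} in three stages, corresponding to the three claims. For \emph{absolute convergence}, the ratio test applied to~\eqref{eq:lem:series_eval:q} gives
\[
\frac{\binom{M+1+k}{k}(M+1)^q}{\binom{M+k}{k}\, M^q}\,\abs{x} = \frac{M+k+1}{M+1}\cdot\left(\frac{M+1}{M}\right)^q \abs{x} \longrightarrow \abs{x} < 1,
\]
settling the first claim. The \emph{closed form}~\eqref{eq:lem:series_eval:noq} for $q=0$ is the negative binomial series; I would derive it by differentiating the geometric series $\sum_{M\geq 0} x^M = (1-x)^{-1}$ a total of $k$ times and dividing by $k!$, which on the right side converts $x^M$ into $\binom{M}{k}x^{M-k}$ and, after an index shift, into $\binom{M+k}{k}x^M$.

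The main work is the \emph{remainder formula}~\eqref{eq:lem:series_eval:remainder}. Writing $R_N^{(k)}(x)$ for the remainder to make the dependence on $k$ explicit, I would evaluate $(1-x)R_N^{(k)}(x)$ by an Abel-type shift of the summation variable and use Pascal's rule $\binom{M+k}{k}-\binom{M+k-1}{k}=\binom{M+k-1}{k-1}$ to telescope. The leading term becomes $\binom{N+k+1}{k}x^{N+1}$ and the remaining sum reorganizes as $R_{N+1}^{(k-1)}(x)$, yielding the recursion
\[
R_N^{(k)}(x) \;=\; \frac{\binom{N+k+1}{k}}{1-x}\, x^{N+1} \;+\; \frac{1}{1-x}\, R_{N+1}^{(k-1)}(x).
\]
With the boundary convention $R_N^{(-1)}(x)=0$ (since $\binom{M-1}{-1}=0$), iterating this recursion $k+1$ times eliminates the $R$-term on the right and produces
\[
R_N^{(k)}(x) \;=\; \sum_{m=0}^{k}\, \frac{\binom{N+k+1}{k-m}}{(1-x)^{m+1}}\, x^{N+1+m}.
\]
A symmetry rewrite of the binomial coefficient, $\binom{N+k+1}{k-m}=\binom{N+k+1}{N+m+1}$, and factoring out $x^N$ then yields exactly~\eqref{eq:lem:series_eval:remainder}.

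The step I expect to require the most care is the telescoping that produces the recursion: one must track the index shift $M\mapsto M-1$ in $xR_N^{(k)}$ correctly, identify the boundary term $\binom{N+k+1}{k}x^{N+1}$ coming from $M=N+1$, and confirm that the remaining sum is indeed $R_{N+1}^{(k-1)}$ rather than something off by one. Once the recursion is in hand, the iteration and the final binomial symmetry are essentially bookkeeping; a short induction on $k$ (with the trivial base $R_N^{(0)}(x)=x^{N+1}/(1-x)$) can be used to formalize the iteration without ambiguity.
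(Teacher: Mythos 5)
Your proposal is correct, and for the main claim—the remainder formula~\eqref{eq:lem:series_eval:remainder}—it takes a genuinely different route from the paper. The paper works analytically: it writes $R_N$ in the integral form of the Taylor remainder, substitutes $u=(1-\tau)^{-1}$ to turn the integrand into a polynomial in $u$, and then performs $(k+1)$ integrations by parts, with the boundary terms producing the summands $\binom{N+k+1}{N+m+1}\bigl(\tfrac{x}{1-x}\bigr)^{m+1}x^N$. You instead stay entirely algebraic: multiplying $R_N^{(k)}$ by $(1-x)$, shifting the index, and applying Pascal's rule gives the recursion $R_N^{(k)}=\tfrac{1}{1-x}\binom{N+k+1}{k}x^{N+1}+\tfrac{1}{1-x}R_{N+1}^{(k-1)}$, whose $k+1$-fold iteration (equivalently, induction on $k$ from the base $R_N^{(0)}=x^{N+1}/(1-x)$) yields $\sum_{m=0}^{k}\binom{N+k+1}{k-m}(1-x)^{-(m+1)}x^{N+1+m}$, and the symmetry $\binom{N+k+1}{k-m}=\binom{N+k+1}{N+m+1}$ finishes the job; I checked the index bookkeeping and it is consistent. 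Your route is more elementary (no calculus beyond the ratio test and the geometric series) and arguably easier to verify line by line, while the paper's integral-remainder route generalizes more naturally if one wanted remainders of other hypergeometric-type series. Two minor points: your derivation of~\eqref{eq:lem:series_eval:noq} by differentiating the geometric series is equivalent to the paper's computation of the derivatives of $g$; and the paper's proof additionally shows $R_N(x)\to 0$ as $N\to\infty$, which is not part of the lemma's statement, so its absence from your proposal is not a gap.
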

\begin{proof}
	Denote summands of Equation \eqref{eq:lem:series_eval:q} by $a_M$. The quotient criterion yields
	\begin{align}
		\lim_{M \rightarrow \infty} \abs{\frac{a_{M+1}}{a_M}} = \lim_{M \rightarrow \infty} \frac{M + k + 1}{M + k} \left(\frac{M+1}{M}\right)^q \abs{x} = \abs{x} \;,
	\end{align}
	proving absolute convergence for $\abs{x} < 1$ and $q \geq 0$. 
	For the special case $q = 0$, a straightforward induction procedure shows that the $M$-th derivative of 
	$g(x) = (1-x)^{-(1+k)}$ is
	\begin{align}
		g^{(M)}(x) := \td{^M}{x^M} (1 - x)^{- (1 + k)} = \frac{(M + k )!}{k!} (1 - x)^{-(M + k + 1)} && \text{~for all~} M \in \Nspace \;.
	\end{align}
	Evaluated at zero, these derivatives define the Taylor formula
	\begin{align}
		g(x) = (1-x)^{-(k+1)} = \sum_{M = 0}^N \frac{(M + k )!}{k!} \frac{x^M}{M!} + R_N(x)\;,
	\end{align}
	where $N \in \Nspace$ is the maximum degree of the Taylor polynomial and $R_N(x)$ is the remainder. 
	
	The Taylor remainder in integral form~\cite{Oberguggenberger2018} is given by
	\begin{align}
		R_N(x) &= \int_{0}^x \frac{(x - \tau)^N}{N!} g^{(N+1)}(\tau) \diff \tau = \int_0^x \frac{(x - \tau)^N}{N!} \frac{(N+1 + k)!}{k!} (1 - \tau)^{-(2 + k + N)} \diff \tau\;.
	\end{align}
	The substitution $u = (1-\tau)^{-1}$ of the integration variable simplifies this integral to
	\begin{align}\label{eq:lem:series_eval:remainder_u}
		R_N(x) &= \int_{1}^{(1-x)^{-1}} \frac{u^k}{k!} \, \frac{(N+k+1)!}{N!} \, (1 - (1-x)u)^N \diff u \;,
	\end{align}
	which is an integral over a polynomial in $u$. We evaluate this integral using $(k+1)$-times repeated integration by parts. The integrand of Equation~\eqref{eq:lem:series_eval:remainder_u} is given by the product of the functions 
	\begin{align}
		v^{(0)}(u) &:= \frac{u^k}{k!}\\
		W^{(0)}(u) &:= \frac{(N+k+1)!}{N!} (1 - (1-x)u)^N \;.
	\end{align}
	For $v$, we can immediately compute its $m$-th derivatives
	\begin{align}
		v^{(m)}(u) := \td{^m}{u^m} v^{(0)}(u) = \frac{u^{k-m}}{(k-m)!} && m = 0, \dots, k
	\end{align}
	and the $(k+1)$-th derivative is zero. The function $W^{(0)}$ has the $m$-th antiderivative
	\begin{align}
		W^{(m)}(u) := (x-1)^{-m} \, \frac{(N+k+1)!}{(N + m)!} \, (1 - (1-x) u)^{N+m}
	\end{align}
	such that $\td{^m}{u^m} W^{(m)}(u) = W^{(0)}(u)$ for $m = 0, \dots, k+1$. Performing $k+1$ times the integration by parts on Equation~\eqref{eq:lem:series_eval:remainder_u}, each time using the next derivative of $v$ and the next antiderivative of $W$, yields the formula
	\begin{align}\label{eq:lem:series_eval:partint}
		\int_{1}^{(1-x)^{-1}} v^{(0)} W^{(0)}\diff u = \sum_{m = 0}^{k} (-1)^m \left[ W^{(m+1)} v^{(m)}\right]_{1}^{(1-x)^{-1}} \!\!\!+ (-1)^{k+1} \int_{1}^{(1-x)^{-1}} W^{(k+1)} v^{(k+1)}\diff u \;,
	\end{align}
	where the dependence on $u$ was omitted for the sake of brevity. The integral on the right-hand side of Equation~\eqref{eq:lem:series_eval:partint} vanishes together with $v^{(k+1)}$, and each summand of the remaining boundary terms can be evaluated individually to
	\begin{align}\label{eq:lem:series_eval:partint:summand}
		(-1)^m \left[W^{(m+1)} v^{(m)}\right]_1^{(1\!-\!x)^{-1}} \!\!\! &= \left[ - (1\!-\!x)^{-(m+1)} \tfrac{(N+k+1)!}{(N + m + 1)! (k - m)!} (1 \!-\! (1\!-\!x)u)^{N + m + 1} u^{k - m} \right]_{1}^{(1-x)^{-1}} \nonumber \\
		&= x^N \left( \frac{x}{1-x}\right)^{m+1} \binom{N+k+1}{N+m+1} \;.
	\end{align}
	Substituting Equation~\eqref{eq:lem:series_eval:partint:summand} into Equation~\eqref{eq:lem:series_eval:partint} yields the desired expression, Equation~\eqref{eq:lem:series_eval:remainder}, for the remainder.
	
	To show that the remainder converges to zero as $N \rightarrow \infty$, we will proceed to bound the expression~\eqref{eq:lem:series_eval:remainder} from above. To lighten notation, we assume w.l.o.g that $x > 0$ (otherwise, replace $x$ by $\abs{x}$ and $R_N(x)$ by $\abs{R_N(x)}$ in the developments below). From the factorial expressions it is easy to see that 
	\begin{align}\label{eq:lem:series_eval:factorials}
		\binom{N + k + 1}{N + m + 1} = \binom{N + k + 1}{N + 1} \binom{k} {m} \binom{N + m + 1}{m}^{-1} \;. 
	\end{align}
	Noting that $\binom{N + m + 1}{m}^{-1} \leq 1$ for all $m \in \Nspace$, we substitute Equation~\eqref{eq:lem:series_eval:factorials} into the remainder~\eqref{eq:lem:series_eval:remainder} and use the binomial theorem to obtain
	\begin{align}
		R_N(x) &\leq x^N \frac{x}{1-x}\binom{N + k + 1}{N + 1} \sum_{m = 0}^k  \binom{k} {m} \left( \frac{x}{1-x} \right)^{m} 1 ^{k-m} \nonumber \\
		& = \frac{x^{N+1}}{(1-x)}\binom{N + k + 1}{N + 1} \left(\frac{x}{1-x} + 1\right)^k \nonumber \\
		& = \frac{x^{N+1}}{(1-x)^{k+1}}\binom{N + k + 1}{N + 1} \;.
	\end{align}
	Finally, using the bound $\binom{N+1+k}{k} \leq (N+1)^{k+1}$, we find for any fixed $k \in \Nspace$
	\begin{align}
		0 \leq \lim_{N \rightarrow \infty} R_N(x) \leq \lim_{N \rightarrow \infty} x^{N+1} \left(\frac{N+1}{1-x}\right)^{k+1} = 0 \;,
	\end{align}
	completing the proof. 
\end{proof}


\end{document}